\newcommand{\CR}{\mathbb{C}_{Re>0}}
\theoremstyle{plain}
\newtheorem{thm}{Theorem}
\newtheorem{lem}[thm]{Lemma}
\newtheorem{prop}[thm]{Proposition}
\newtheorem{cor}[thm]{Corollary}
\theoremstyle{definition}
\newtheorem{dfn}[thm]{Definition}
\theoremstyle{remark}
\newtheorem{rmk}[thm]{Remark}
\newcommand{\C}{\mathbb{C}}
\def\vac{\mathbf{1}}
\DeclareMathOperator{\End}{End}
\DeclareMathOperator{\Res}{Res}
\begin{document}

\title{On Rationality Of $\mathbb{C}$-Graded Vertex Algebras And Applications To Weyl Vertex Algebras Under Conformal Flow}

\author{Katrina Barron} 
 \email[Corresponding author: ]{kbarron@nd.edu}

 \affiliation{
  Department of Mathematics, University of Notre Dame, Notre Dame, IN 46556, USA. 
}

\author{Karina Batistelli}%
\email{kbatistelli@unc.edu.ar}
\affiliation{Department of Mathematics, Universidad de Chile, Santiago, Chile.}
\author{Florencia Orosz Hunziker}
\email{florencia.orosz@du.edu}
\affiliation{Department of Mathematics, University of Denver, Denver, CO 80208, USA.%
}%
\author{Veronika Pedić Tomić}
\email{vpedic@math.hr}
\affiliation{Department of Mathematics, University of Zagreb, Bijenicka cesta 30, Zagreb, Croatia.}
\author{Gaywalee Yamskulna}
\email{gyamsku@ilstu.edu}
\affiliation{Department of Mathematics Illinois State University, Normal, IL 61790, USA.}
\date{\today} 

\begin{abstract}
Using the Zhu algebra for a certain category of $\mathbb{C}$-graded vertex algebras $V$, we prove that if $V$ is finitely  $\Omega$-generated and satisfies suitable grading conditions, then $V$ is rational, i.e. has semi-simple representation theory, with one dimensional level zero Zhu algebra. Here $\Omega$ denotes the vectors in $V$ that are annihilated by lowering the real part of the grading.  We apply our result to the family of rank one Weyl vertex algebras with conformal element $\omega_\mu$ parameterized by $\mu \in \mathbb{C}$, and prove that for certain non-integer values of $\mu$, these vertex algebras, which are non-integer graded, are rational, with one dimensional level zero Zhu algebra.  In addition, we generalize this result to appropriate $\mathbb{C}$-graded Weyl vertex algebras of arbitrary ranks.
\end{abstract}

\maketitle

\section{1. Introduction}

In this paper, we study various subcategories of the category of $\mathbb{C}$-graded vertex algebras, including those with a conformal element imposing various grading structures.  We illustrate the nature of these subcategories via the conformal flow for the family of $\mathbb{C}$-graded Weyl vertex algebras with conformal elements $\omega_\mu$ parameterized by $\mu \in \mathbb{C}$.  We prove two rationality results for certain $\mathbb{C}$-graded vertex algebras that admit a conformal structure with a ``nice" grading property.  We then apply these results to show that for $\mu \in \mathbb{C}$ in a certain simply closed region of the complex plane, the corresponding Weyl vertex algebras with conformal element $\omega_\mu$ are rational (in the sense that the representation theory is semisimple), and in fact admit only one simple ``admissible" module, where ``admissible" here means having a grading compatible with that of the vertex algebra.  These admissible modules are also the modules that are induced from the level zero Zhu algebra.

A large portion of the literature on vertex algebras and their representations both from a mathematical and physical standpoint has been devoted to the study of rational conformal vertex algebras that are of CFT-type  (see, for instance, Section 1.1 of \cite{HLZ} for a list of these types of vertex algebras and references therein). It is a natural question to ask whether there are other significant classes of conformal vertex algebras that are well-behaved from the representation-theoretic point of view, for instance are either rational (have semi-simple representation theory) for some category of modules, or are irrational (have indecomposable modules that are not simple) for some category of modules, but the category has other nice properties. This is one of the motivations behind the concept of $\mathbb{C}$-graded vertex algebras.
\vfill
{\footnotesize K. Batistelli is supported by Fondecyt project 3190144. F. Orosz Hunziker is supported by the National Science Foundation under grant No. DMS-2102786. V. Pedić Tomić is partially supported by the QuantiXLie Centre of Excellence, a project
cofinanced by the Croatian Government and European Union through the European Regional Development Fund - the Competitiveness and Cohesion Operational Programme
(KK.01.1.1.01.0004). G. Yamskulna is supported by the College of Arts and Sciences, Illinois State University.}
\clearpage
 Conformal flow consists of the deformation of the conformal vector $\omega$ associated to a vertex operator algebra $V$ to obtain a new conformal structure $\omega_{\mu}$ on $V$,  for $\mu \in \mathbb{C}$ a continuous parameter. All possible conformal structures associated to the Heisenberg vertex algebra (also known as the free bosonic vertex algebra) were formally classified in  \cite{MN}.  One of  these ``shifted'' conformal structures for the Heisenberg vertex algebra is used in the study of the triplet algebras \cite{AM2}, an important example of $C_2$-cofinite but irrational vertex algebras. When deforming the conformal vector, the grading restrictions associated to the $L(0)$-operator are often lost. Namely, the new conformal vector $\omega_{\mu}$ does not necessarily satisfy that its zero mode $L^{\mu}(0)$ acts semisimply on $V$ or that each graded component  of $V$ must be finite dimensional. The appropriate framework to study the new conformal vertex algebra $(V, \omega_{\mu})$ is the theory of $\mathbb{C}$-graded vertex algebras developed in \cite{DM, LM} as a continuation of the development of the theory of $\mathbb{Q}$-graded vertex algebras started in \cite{DLM2}. Motivated in part by this work on $\mathbb{C}$-graded vertex algebras and conformal flow, where in \cite{LM} the notion of  ``$\mathbb{C}$-graded vertex algebra" is more specifically called an ``$\Omega$-generated $\CR$-graded vertex algebra" in our work, we establish a refinement of the various concepts of $\mathbb{C}$-grading for a vertex algebra.

The Weyl vertex algebra, to which we apply our results, admits a conformal flow. The Weyl vertex algebra has its origins in physics as fields of Faddeev-Popov ghosts in the early formulations of conformal field theory where it is also known in the physics literature (sometimes with some specific fixed central charge and thus conformal element) as the bosonic ghost system or the $\beta \gamma$-system (cf. \cite{P1}, \cite{P2}, \cite{FMS}, and references therein).    The terminology bosonic ghost system for the Weyl vertex algebra refers to the fact that this vertex algebra comprises one of the four fundamental free field algebras, those being free bosons, free fermions, bosonic ghosts, and fermionic ghosts. Consequently, the Weyl vertex algebra has played a crucial role  in many aspects of conformal field theory and the study of the various mathematical structures that conformal field theory involves. Conformal flow and the relationship between conformal flow for bosonic ghosts (i.e., the Weyl vertex algebra) and conformal flow of the free boson vertex subalgebra of bosonic ghosts was studied by Feigin and Frenkel in \cite{FF3}, and the BRST cohomology was calculated for certain Fock space representations of bosonic ghosts with  $\mu = 2$ and central charge $c=26$ associated to a 26-dimensional Minkowski space. The Weyl vertex algebra was used in the study of free field  realizations of affine Lie algebras and  the chiral de Rham complex  (cf. \cite{FF, FF1, FF2, FBZ, MSV, W}), and  more recently, Weyl vertex algebras have been used to describe relations between conformal field theory, topological invariants, and number theory through the study of the (twined) $K3$ elliptic genus and its connections to  umbral and Conway moonshine \cite{ACH}. 

 As discussed above, both free bosons and bosonic ghosts admit multiple conformal structures. In this paper, we give a detailed analysis of the nature of the conformal structures of the Weyl vertex algebra under conformal flow, classify the ``admissible" modules for the Weyl vertex algebra for certain infinite families of conformal elements, and prove that the category of such admissible modules is semisimple for these conformal structures.  We denote the Weyl vertex algebra by $M$, and the Weyl vertex algebra with conformal element $\omega_\mu$, for the complex parameter $\mu \in \mathbb{C}$, by $(_\mu M, \omega_\mu)$ or just $_\mu M$. 

The Weyl vertex algebra with conformal element $\omega_\mu = \omega_0$, denoted by $_\mu M = \,  _0 M,$ gives a conformal vertex algebra with central charge $c=2$, and has been studied intensively. For this conformal structure, the Weyl vertex algebra gives  a distinguished example of an irrational $\mathbb{Z}$-graded conformal vertex algebra,  of current interest in the setting of logarithmic conformal field theory.  The term ``irrational" refers to the fact that the conformal vertex algebra does not have semisimple representation theory, and logarithmic conformal field theory involves the study of such vertex algebras and the category structure of various types of modules for these algebras (cf. \cite{CG, GRR, AM}). In particular, categories of modules for which the zero mode of the conformal element $L(0)$ does not act semisimply even though the modules have certain nice $L(0)$-grading properties, often referred to as ``admissible", are the categories of interest, and specifically those closed under the tensor product and with graded characters that have modular invariance properties.     

It was shown by Ridout and Wood in \cite{RW} that $_0 M$ is not $C_2$-cofinite and admits reducible yet indecomposable modules on which the Virasoro operator $L(0)$ acts non-semisimply. Moreover, in \cite{RW}  the authors identified a module category $\mathcal{F}$  that satisfies three necessary conditions arising from logarithmic conformal field theory for the category to have a nice tensor structure. They also determined the modular properties of characters in  that category and computed the Verlinde formulas.  Then in \cite{AP},  Adamović and  Pedić computed the dimension of the spaces of intertwining operators among simple modules in category $\mathcal{F}$  and gave a vertex-algebraic proof of the Verlinde type conjectures in \cite{RW}. 
Recently in \cite{AW}, Allen and Wood classified all indecomposable modules in $\mathcal{F}$, showed that it is rigid and determined the direct sum decompositions for all fusion products of its modules.

 In \cite{ALPY} certain (nonadmissible) weak modules for the Weyl algebra with conformal element $\omega_\frac{1}{2}$ and central charge $c=-1$ were studied in the context of Whittaker modules and modules for the fixed point subalgebra of $_\frac{1}{2} M$ under a certain automorphism. Here, it was shown that the family of Whittaker modules described in \cite{ALPY} is irreducible for these orbifold (fixed point) subalgebras of the Weyl algebra at $\mu = 1/2$, while in a recent paper \cite{AP2}, the opposite was proved for other orbifold subalgebras where these Whittaker modules were shown to be reducible.  

A natural question to ask then is what is the nature of the category of admissible modules for the Weyl vertex algebra with conformal element other than $\omega_0$ under conformal flow, and more generally what broader results concerning the modules for non-integer graded conformal vertex algebras hold? In particular, is the category of admissible modules semisimple or not?

In this paper, we answer these questions.  In particular, we study  the influence of the central charge, or equivalently the choice of conformal element, on the  representation theory of Weyl vertex algebras of arbitrary rank in the case when the vertex algebra is not integer graded. More generally we study non integer graded conformal vertex algebras. We begin our investigation by studying the (level zero) Zhu algebra $A(V)$ of a finitely  $\Omega$-generated $ \CR $-graded vertex algebra $V$, where $\Omega$ denotes the vectors in $V$ that are annihilated by lowering the real part of the grading.  In fact, we show that if $V$ is an $\Omega$-generated $ \CR$-graded vertex algebra   that is finitely generated (in the usual sense) such that the generators do not have integer weights and $V$ contains an $\mathbb{N}$-graded  vertex subalgebra,  then $V$ is rational in the sense that the representation theory for admissible modules is semisimple. As an application, we  prove that in particular, a rank one Weyl vertex algebra $M_c$ with  $c\in\mathbb{R}$ and  $-1<c<2$ is rational. Consequently, we prove that a rank $n$ Weyl vertex algebra which  is a tensor product of $n$ rank one Weyl vertex  algebras, each with $c\in\mathbb{R}$ and  $-1<c<2$, is rational. More generally, we show that in fact for certain complex values of the central  charge under conformal flow, these rationality result holds as well. 

This phenomenon of the change in the nature of the representation theory of the conformal Weyl vertex algebra for admissible modules (i.e., modules compatible with the grading arising from the conformal structure) under conformal flow is surprising in contrast to the lack of change of the representation theory under conformal flow for the free boson vertex operator algebra.  See Remark \ref{conformalflowrem} below.

This paper is organized as follows: In Section 2, we define various notions involving vertex algebras with gradings, and/or with conformal vectors, and their modules. In Section 3, we study the rank one Weyl vertex algebra and the various graded structures imposed by the family of conformal vectors $\omega_\mu$, for $\mu \in \mathbb{C}$, under conformal flow with respect to $\mu$.  This family of conformal vertex algebras provides good examples and motivations for the various notions of vertex algebra defined in Section 2.  

In Section 4, we recall the notion of the Zhu algebra of  an $\Omega$-generated $\CR$-graded vertex algebra as introduced in \cite{LM}, where such vertex algebras were called $\mathbb{C}$-graded vertex algebras.  We also present several results on the correspondence between modules for the Zhu algebra $V$ and a certain class of $V$-modules, i.e. $\CR$-graded modules.  

In Section 5, we present our main results and applications to the Weyl vertex algebras. First we prove a theorem on the rationality of $\Omega$-generated $\CR$-graded vertex operator algebras satisfying certain conditions; see Theorem \ref{rationalcondition}.  

Then in Subsection 5.1, motivated by the work of Zhu \cite{Z} and of Li \cite{Li5} we define a filtration on the Zhu algebra of an $\Omega$-generated $\CR$-graded vertex algebra and prove that under this filtration we obtain a graded commutative associative algebra $grA(V)$.  We show that there is an epimorphism from our $\Omega$-generated $\CR$-graded vertex algebra to this graded commutative associative algebra with kernel of the epimorphism containing a set $C(V)$ which, in this setting, is an analogue of the set $C_2(V)$  defining the $C_2$-cofinite condition for a vertex operator algebra.  In Subsection 5.2, we give our main results on the rationality of certain $\CR$-graded vertex operator algebras with generators having noninteger weights by using the epimorphism  from $V/C(V)$ to $grA(V)$; see Lemma \ref{A(V)} and Theorem \ref{Vrational}.

In Subsection 5.3, we apply Theorems \ref{mu-theorem}, \ref{rationalcondition} and \ref{A(V)} to the Weyl vertex algebras with conformal vectors $\omega_\mu$ for $\mu$ in a certain region determined in Section 4 that give these vertex algebras the structure of an $\Omega$-generated $\CR$-graded vertex operator algebra, and prove that these are rational with only one $\CR$-graded module.  We then apply this result to the rank $n$ Weyl vertex algebras with suitable conformal element.  We also prove that more generally, for $\mu \in \mathbb{C} \smallsetminus \{0,1\}$ with $0\leq Re(\mu) \leq 1$, then the Weyl vertex algebra $_\mu M$ admits a unique, up to isomorphism, irreducible $\CR$-graded module, namely $_\mu M$ itself.

In  Section 6, we summarize the results of this paper and also present a result giving the level one Zhu algebra for $_0 M$, i.e. the Weyl vertex algebra with central charge $c=2$. 

\section{2. $\mathbb{C}$-graded vertex algebras and
their modules}

\subsection{2.1. Vertex algebras and  $\Omega$-generated $\mathbb{C}$-graded vertex algebras}

 We recall the definitions of various type of vertex algebras, following for instance \cite{LL, HLZ} for basic notions, but then motivated as well by the work of Laber and Mason in \cite{LM} in the setting of $\mathbb{C}$-graded vertex algebras and related notions. However, it should be noted that we use different terminology for some of the structures in \cite{LM};  cf. Remarks \ref{RmkLM1} and \ref{rms}. 
\begin{dfn}\cite{LL}
A \textit{vertex algebra} $(V, Y,\vac)$ consists of a vector space $V$ together with a linear map 
\begin{equation*}
		\begin{aligned}
		Y \colon V &\rightarrow (\mathrm{End}\,V)[[x,x^{-1}]],\\
		v &\mapsto Y(v,x)=\sum_{n\in\mathbb{Z}} v_n\,x^{-n-1},
		\end{aligned}
		\end{equation*}
		and a distinguished vector, $\vac \in V$(the vacuum vector), satisfying the following axioms:
		\begin{enumerate}[label=\textup{(\roman*)}]
			\item
			The lower truncation condition: for $v_1, v_2\in V$, $Y(v_1,x)v_2$ has only finitely many
			terms with
			negative powers in $x$.
			
			\item
			The vacuum property: $Y(\mathbf{1},x)$ is the identity endomorphism $1_V$ of $V$.
			\item The creation property: for $v\in V$,  $Y(v,x)\vac \in V[[x]]$ and $\lim_{x\rightarrow 0}Y(v,x)\vac=v$.

			\item
			The Jacobi identity: for $w,v\in V$,
			\begin{multline*}
			x_0^{-1}\delta\left(\dfrac{x_1-x_2}{x_0}\right) Y(v_1,x_1)Y(v_2,x_2)- x_0^{-1}\delta\left(\dfrac{x_2-x_1}{-x_0}\right)Y(v_2,x_2)Y(v_1,x_1) \notag \\
			 = x_2^{-1}\delta\left(\dfrac{x_1-x_0}{x_2}\right)Y(Y(v_2,x_0)v_1,x_2).
			\end{multline*}
\end{enumerate}			
\end{dfn}

\begin{dfn}\label{C-graded-va-def} 
A vertex algebra equipped with a $\mathbb{C}$-grading $V=\bigoplus_{\lambda \in\mathbb{C}}V_{\lambda}$ is called a {\em $\mathbb{C}$-graded vertex algebra} if ${\bf 1}\in V_0$ and if for $v\in V_{\gamma}$ with $\gamma\in\mathbb{C}$ and for $n\in\mathbb{Z}$, $\lambda\in\mathbb{C}$,
\begin{align} \label{wc}
v_n V_{\lambda}\subset V_{\lambda+\gamma-n-1}.    
\end{align}

Moreover, a  homogeneous element in a $\mathbb{C}$-graded vertex algebra $V$ is said to have {\em weight} $\lambda$ if  $v\in V_{\lambda}$. We denote this by $|v|=\lambda$, and we define the operator $L\in\End(V)$ as the linear extension of the map 
\begin{eqnarray} \label{Ldef}
V_{\lambda}&\rightarrow&V_{\lambda} \nonumber \\
v&\mapsto&\lambda v=|v|v.
\end{eqnarray} 
\end{dfn}

\begin{rmk} \label{weight-gradingin-remark} ${}$
\begin{enumerate}
    \item Since we do not require the existence of a conformal element in a $\mathbb{C}$-graded vertex algebra, the map defined above is a natural tool to describe the weight of a  homogeneous element. 
\item In a $\mathbb{C}$-graded vertex algebra because of Definition \ref{wc} we have that for $v^1, v^2 \in V$

    \begin{align*}
        |v^1_nv^2|=|v^1|+|v^2|-n-1.
    \end{align*}
    More generally, for $v, v^1, \dots, v^k\in V$
    \begin{align*} 
      |v^k_{n_k}\cdots v^1_{n_1}v|= \left(\sum_{j=1}^k|v^j|-n_j-1\right)+|v|.
    \end{align*}
\end{enumerate}
\end{rmk}

\begin{rmk} \label{RmkLM1}
In \cite{LM} the notion of $\mathbb{C}$-graded vertex algebras has more conditions than what we require above in Definition \ref{C-graded-va-def}.  In our terminology, the Laber-Mason notion of a $\mathbb{C}$-graded vertex algebra is  an $\Omega$-generated $\mathbb{C}$-graded vertex algebra, as defined below in Definition \ref{Cgrad}.  Many of our results in fact make fine distinctions between these two notions.  
\end{rmk}

\begin{rmk} \label{D} Recall from \cite{LL}, that for $V$ a vertex algebra, the endomorphism  $D:V\longrightarrow V$ defined as the linear map determined by $D(v) =v_{-2}\vac$ satisfies the $D$-derivative property: $Y(Dv,x) = \frac{d}{dx}Y(v,x)$.  Furthermore $D(\vac) = 0$, and $v = v_{-1} {\bf 1}$. It then follows that for a $\mathbb{C}$-graded vertex algebra, by Eqn.\ (\ref{wc}) and the $D$-derivative property, we have that if $v\in V_{\lambda}$,  then ${D}v = v_{-2} \vac \in V_{\lambda + 0 -(-2) -1} = V_{\lambda +1}$.
\end{rmk}

\begin{dfn} \label{omegaV} Let $V=\bigoplus_{\lambda \in\mathbb{C}}V_{\lambda}$ be a $\mathbb{C}$-graded vertex algebra.  We define 
\begin{eqnarray*}
\Omega(V) &=& \{ v \in V \; | \; \mbox{for  any $u\in V_{\gamma}$, $n \in \mathbb{Z}$, if $u_nv\neq 0$ then 
either $n=\gamma-1$ or $n< Re(\gamma)-1$} \} 
\end{eqnarray*}
where $Re(\gamma)$ denotes  the real part of $\gamma$. 
\end{dfn} 

\begin{rmk} ${}$

\begin{enumerate} 
    \item{The space $\Omega(V)$ consists of the vectors in $V$ that are zero if they are acted on by any mode of $V$ that lowers the real part of the weight.  This space is often called the ``vaccum space" or the ``space of lowest weight vectors".  However the vacuum vector ${\bf 1}$ is not necessarily in $\Omega(V)$.   For instance, assume that $V=\bigoplus_{\lambda\in\mathbb{C}} V_{\lambda}$ such that $V_{-10}\neq 0$. Let $a\in V_{-10}$. Notice that $a_{-1}{\bf 1}=a\neq 0$. Also, $-1\neq -10-1$ and $-1> Re(-10)-1$.  Hence,  in this case  ${\bf 1} \notin \Omega(V).$ We give an example of such a vertex algebra in Section 3, namely the Weyl vertex algebra $_\mu M$ with $\mu \in \mathbb{R}$ and $\mu<0$, for example $\mu = -1/2$ and thus $c=11$.} 
\item{In addition, the term ``lowest weight space" is   misleading since there can be vectors in $\Omega(V)$ that are not of lowest weight in the sense of having any kind of minimality property with respect to  their $\mathbb{C}$-grading in $V$;  instead, these are the vectors that can not be further lowered.  An example of such a $\mathbb{C}$-graded vertex algebra is, for instance, the universal Virasoro vertex operator algebra of central charge $c=\frac{1}{2}$, denoted $V_{Vir}(\frac{1}{2}, 0)$ (in the notation of \cite{LL}).  This $\mathbb{Z}$-graded vertex algebra is indecomposable but not irreducible, and it has a singular vector $v_{3,2}$ of
weight 6 that satisfies $v_{3,2}\in \Omega(V_{Vir}(\frac{1}{2},0))$. }
\end{enumerate}
\end{rmk} 

 Next we introduce the notion of  an $\Omega$-generated $\mathbb{C}$-graded vertex algebra  motivated by Laber and Mason \cite{LM},  where this notion is called a $\mathbb{C}$-graded vertex algebra.

\begin{dfn}\label{Cgrad} An { \it  $\Omega$-generated  $\mathbb{C}$-graded vertex algebra},  (or {\it a $\mathbb{C}$-graded vertex algebra generated by $\Omega$)} is a $\mathbb{C}$-graded vertex algebra $(V, Y, \mathbf{1})$ such that every element $v\in V$ is a finite sum of elements of the form  \[v^k_{n_k}v^{k-1}_{n_{k-1}} \cdots v^1_{n_1}u^0\]
for $k \in \mathbb{N}$, $n_1, \dots, n_k \in \mathbb{Z}$,  $v^1, \dots, v^k \in V$, and  $u^0 \in \Omega(V)$.

The notions of an $\Omega$-generated $\mathbb{R}$-graded,  $\Omega$-generated $\mathbb{Q}$-graded,  $\Omega$-generated $\mathbb{Z}$-graded and  $\Omega$-generated $\mathbb{N}$-graded vertex algebra are defined in the obvious way.
\end{dfn}
\begin{rmk}
We show in Section 3 that  the collection of $\Omega$-generated $\C$-graded vertex algebras form a proper subset of the set of $\C$-graded vertex algebras. Namely, in Section 3 we present a family of $\mathbb{C}$-graded Weyl vertex algebras which are not {\it $\Omega$-generated} $\mathbb{C}$-graded vertex algebras (see Theorem \ref{mu-theorem} III.)  
\end{rmk}

 We will also need the notions of a {\it strongly generated} and {\it finitely strongly generated} vertex algebra given as follows:

\begin{dfn} \label{sgen}
A {\it strongly generated vertex algebra} is a vertex algebra $(V,Y, {\bf 1})$ together with  a subset $S \subset V$ such that every element $v \in V$ is a finite sum of elements of the form 
\[v^k_{ -n_k}v^{k-1}_{ -n_{k-1}} \cdots v^1_{ -n_1}{\bf 1}\]
for $k \in \mathbb{N}$, $n_1, \dots, n_k \in \mathbb{Z}_{+}$,  and $v^1, \dots, v^k \in S$.  If $V$ is strongly generated by a finite set $S$, then we say that $V$ is {\it strongly finitely generated}.
\end{dfn}

\begin{rmk} \label{strogen}
Any $\Omega$-generated $\mathbb{C}$-graded vertex algebra $V$ is trivially a strongly generated  vertex algebra with $S=V$. If $V$ is also strongly finitely generated by a finite set of generators $S$ acting on $\Omega$ and $\Omega$ is also finite, then we call $V$ a {\it finitely} $\Omega$-generated $\mathbb{C}$-graded vertex algebra. All $\Omega$-generated $\mathbb{C}$-graded vertex algebras are strongly generated but the converse is not true, even if we have finitely many strong generators. In Theorem \ref{mu-theorem} (III) we give examples of finitely strongly generated $\mathbb{C}$-graded Weyl vertex algebras which are not $\Omega$-generated.
\end{rmk}

 For certain  $\Omega$-generated  $\mathbb{C}$-graded vertex algebras one can define a degree grading as follows in Definition \ref{degree} below, and we call such  $\Omega$-generated $\mathbb{C}$-graded vertex algebras  $\Omega$-generated $\CR$-graded vertex algebras. In Section 3 we give examples of  $\Omega$-generated $\mathbb{C}$-graded Weyl vertex algebras that admit a grading as defined below. 

\begin{dfn}\label{degree}
 An {\it  $\Omega$-generated $\CR$-graded vertex algebra} is an $\Omega$-generated $\mathbb{C}$-graded vertex algebra, such that the  following notion of degree is well defined:  For $V$ an  $\Omega$-generated $\mathbb{C}$-graded vertex algebra, we define the {\it degree} of an element of $V$  by setting the degree of elements in $\Omega(V)$ to be $0$, and extending by linearity the following formula:
 \[deg(v^k_{n_k}\cdots v^1_{n_1} u^0) = \sum_{j=1}^k (|v^j|-n_j-1)\]
where  $v^1, \dots, v^k \in V$, for $k \in \mathbb{N}$, $n_1, \dots, n_k \in \mathbb{Z}$, and  $u^0 \in \Omega(V)$.
\end{dfn}

\begin{rmk}
Note that this notion of degree is not necessarily well defined for every  $\Omega$-generated $\mathbb{C}$-graded vertex algebra. If $ v_n u^0 \in \Omega(V)$, then by definition of $\Omega (V)$, if $ v_n u^0 \neq 0$, then $deg( v_nu^0) =  |v| -n-1 = 0$ or $Re(deg( v_n u^0)) = Re(|v| - n - 1) = Re( |v|) - n - 1>0$. So by definition, this notion of degree, by setting all elements in $\Omega(V)$ to have degree zero, is precluding the possibility of elements in $\Omega(V)$ of the form $ v_n u^0$ such that $u^0 \in \Omega(V)$, and $ v_n u^0 \neq 0$ for some $n$ satisfying $Re( |v|) - n - 1 >0$. Thus it is the requirement of well-definedness of this definition that is imposing the degree grading given below. 
\end{rmk}

One can show (cf. \cite{LM}) that  it follows from  Definitions \ref{Cgrad} and \ref{degree} that:
\begin{lem}\label{grading-lemma} 
 Let $V$ be an  $\Omega$-generated  $\CR$-graded vertex algebra.  For 
$k\geq 1$, let $v^1$,\dots,$v^k \in V$ be homogeneous, $n_1 ,\dots,n_k \in \mathbb{Z}$, and $u^0 \in \Omega(V)$ such that \[v^k_{n_k}v^{k-1}_{n_{k-1}}\cdots v^1_{n_1}u^0\neq 0.\]
Then for any given $v^j \in V$ and $n_j \in \mathbb{Z}$, either 
 \[Re(\sum_{j=1}^k(| v^j|-n_j-1 ))>0 \qquad \text{ or } \qquad \sum_{j=1}^k(| v^j|-n_j-1)=0.\]
\end{lem}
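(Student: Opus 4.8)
The plan is to prove the statement by induction on $k$, the number of modes applied to the base vector $u^0 \in \Omega(V)$, using the well-definedness of the degree from Definition \ref{degree} together with the defining property of $\Omega(V)$ from Definition \ref{omegaV}. The key observation is that the degree formula $deg(v^k_{n_k}\cdots v^1_{n_1} u^0) = \sum_{j=1}^k(|v^j|-n_j-1)$ agrees, via Remark \ref{weight-gradingin-remark}(2), with the weight shift from $|u^0|$ to $|v^k_{n_k}\cdots v^1_{n_1}u^0|$; so the content of the lemma is really the assertion that every nonzero iterated product sits in weight space $V_\lambda$ with either $\lambda = |u^0|$ or $Re(\lambda) > Re(|u^0|)$, i.e. one can never strictly lower the real part of the weight below that of the $\Omega$-vector.

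First I would treat the base case $k=1$: if $v^1_{n_1} u^0 \neq 0$ with $u^0 \in \Omega(V)$ and $v^1 \in V_{\gamma}$, then the definition of $\Omega(V)$ directly gives that either $n_1 = \gamma - 1$, in which case $|v^1| - n_1 - 1 = 0$, or $n_1 < Re(\gamma) - 1$, in which case $Re(|v^1| - n_1 - 1) = Re(\gamma) - n_1 - 1 > 0$. This is exactly the desired dichotomy for $k=1$. For the inductive step, suppose the claim holds for all iterated products of length $k-1$, and consider $w := v^k_{n_k}(v^{k-1}_{n_{k-1}}\cdots v^1_{n_1}u^0) \neq 0$. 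Set $w' := v^{k-1}_{n_{k-1}}\cdots v^1_{n_1}u^0$, which is then nonzero, and note $w' \in V_{\lambda'}$ where $\lambda' = |u^0| + \sum_{j=1}^{k-1}(|v^j| - n_j - 1)$; by the inductive hypothesis $Re(\lambda') \geq Re(|u^0|)$, with equality only if $\lambda' = |u^0|$.

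Now the subtle point — and the step I expect to be the main obstacle — is passing from information about $w'$ to information about $v^k_{n_k} w'$ when $w'$ is not itself in $\Omega(V)$. The resolution is precisely the well-definedness of $deg$: since $V$ is an $\Omega$-generated $\CR$-graded vertex algebra, the degree of $w$ is well defined, and by the remark following Definition \ref{degree} the well-definedness forces that for any nonzero iterated product either the total degree is $0$ or its real part is strictly positive. One must argue that if this were to fail for $w$ — say $Re(deg(w)) < 0$, or $deg(w) \neq 0$ with $Re(deg(w)) = 0$ — then one could exhibit an element of $\Omega(V)$, built from $w$ by applying finitely many further lowering modes until the real part of the weight can be lowered no further, whose degree representation contradicts the dichotomy, violating well-definedness; here one invokes that $V$ is $\Omega$-generated so that such a "bottoming out" procedure terminates and lands in $\Omega(V)$, and that the two computations of the degree of that $\Omega$-vector (namely $0$, versus the accumulated sum) must agree. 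Thus the well-definedness hypothesis built into Definition \ref{degree} is doing the real work, and the induction is mostly bookkeeping to organize the application of Definition \ref{omegaV} at the bottom level and the weight-additivity of Remark \ref{weight-gradingin-remark}(2) at each stage. I would close by noting that this is the argument referenced as "one can show (cf.\ \cite{LM})" and that the details of the termination step are exactly the place to be careful.
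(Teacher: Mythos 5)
Your base case and the easy half of the inductive step (when $|v^k|-n_k-1=0$ or $Re(|v^k|-n_k-1)>0$) match the paper, but the hard half of the inductive step --- the case $|v^k|-n_k-1\neq 0$ with $Re(|v^k|-n_k-1)\leq 0$, which you correctly identify as the main obstacle --- is not actually proved. Your appeal to ``well-definedness of $deg$'' is circular: the assertion that ``well-definedness forces that for any nonzero iterated product either the total degree is $0$ or its real part is strictly positive'' \emph{is} the statement of the lemma, and the remark following Definition \ref{degree} only rules out elements of $\Omega(V)$ of the special form $v_n u^0$; it says nothing about a general iterated product. Your fallback ``bottoming out'' procedure is also unjustified: being $\Omega$-generated means every vector is a sum of iterated products applied to $\Omega(V)$, not that one can reach $\Omega(V)$ from a given vector by applying finitely many lowering modes (the paper's own example $V_{Vir}(\frac{1}{2},0)$, where a weight-$6$ singular vector lies in $\Omega$, shows that $\Omega$-vectors need not be weight-minimal, so there is no reason such a descent terminates in $\Omega(V)$), and even if it did, comparing ``the accumulated sum'' with $0$ for the resulting vector presupposes exactly the dichotomy you are trying to establish.

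The missing idea is the commutator formula $[v_n,v'_m]=\sum_{i\geq 0}\binom{n}{i}(v_iv')_{m+n-i}$. In the problematic case the paper writes $v^k_{n_k}v^{k-1}_{n_{k-1}}\cdots v^1_{n_1}u^0$ as $v^{k-1}_{n_{k-1}}v^k_{n_k}\cdots u^0$ plus terms $(v^k_iv^{k-1})_{n_k+n_{k-1}-i}v^{k-2}_{n_{k-2}}\cdots u^0$. If some commutator term is nonzero, the inductive hypothesis applies to that length-$(k-1)$ product, and since $|v^k_iv^{k-1}|=|v^k|+|v^{k-1}|-i-1$ the total sum $\sum_{j=1}^k(|v^j|-n_j-1)$ is unchanged, giving the conclusion. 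If all commutator terms vanish at every stage, then $v^k_{n_k}$ commutes past everything and $v^k_{n_k}u^0\neq 0$, contradicting $u^0\in\Omega(V)$ under the standing assumptions $|v^k|-n_k-1\neq 0$ and $Re(|v^k|-n_k-1)\leq 0$. Without this (or an equivalent) mechanism for interchanging modes, your induction does not close.
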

\begin{proof} See Appendix A for a detailed proof of this fact. 
\end{proof}

   \begin{rmk} \label{rmkVadmmod}Note that if $V$ is an   $\Omega$-generated $\CR$-graded vertex algebra $V$ and we define   $V(\lambda)$ to be the space of all $v\in V$ with $deg(v)=\lambda$  then, we have the following decomposition
\begin{align} \label{Vismod}
V=V(0)\bigoplus_{\substack{\lambda \in \mathbb{C}\\ Re(\lambda)>0}} V(\lambda) .   
\end{align}
 This motivates our use of the term $\CR$-graded vertex algebras to denote this particular family of $\mathbb{C}$-graded vertex algebras.
\end{rmk}
\begin{prop}
 Let $V$ be  an $\Omega$-generated $\CR$-graded vertex algebra and let $deg$ be as in Definition \ref{degree}.  Then, the  homogeneous component $V(0)$ in (\ref{Vismod}) coincides with $\Omega(V)$.
\end{prop}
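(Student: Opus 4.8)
The plan is to prove the two inclusions $\Omega(V)\subseteq V(0)$ and $V(0)\subseteq\Omega(V)$ separately; the first is essentially a tautology, while the second is where Lemma~\ref{grading-lemma} does the work. Before doing either, I would record two elementary facts. First, by Definition~\ref{degree} every element of $\Omega(V)$ is assigned degree $0$, which immediately gives $\Omega(V)\subseteq V(0)$. Second, $\Omega(V)$ is a linear subspace of $V$: if $v,v'\in\Omega(V)$ and $u\in V_\gamma$, $n\in\mathbb{Z}$ satisfy $u_n(v+v')\neq 0$, then $u_nv\neq 0$ or $u_nv'\neq 0$, and in either case the defining condition ($n=\gamma-1$ or $n<Re(\gamma)-1$) holds; closure under scalars is clear.

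For the reverse inclusion I would take $v\in V(0)$. Using Definition~\ref{Cgrad} together with multilinearity of the modes in their arguments (to make all inputs homogeneous), I would write $v$ as a finite sum of generating monomials $w=v^k_{n_k}\cdots v^1_{n_1}u^0$ with the $v^j$ homogeneous and $u^0\in\Omega(V)$. Since the degree grading is well defined by hypothesis, each such $w$ lies in the single homogeneous component $V(\deg w)$, so projecting the expression for $v$ onto $V(0)$ lets me discard every monomial of nonzero degree; thus $v$ is a finite sum of generating monomials of degree $0$. By the subspace fact above, it then suffices to show that every nonzero generating monomial $w=v^k_{n_k}\cdots v^1_{n_1}u^0$ with $\deg w=0$ belongs to $\Omega(V)$. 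If $k=0$ this is trivial, since then $w=u^0\in\Omega(V)$.

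The key step is the case $k\geq 1$, $\deg w=0$. Here I would take $u\in V_\gamma$ and $m\in\mathbb{Z}$ with $u_mw\neq 0$, and observe that $u_mw=u_m v^k_{n_k}\cdots v^1_{n_1}u^0$ is again a nonzero generating monomial (now with $k+1$ modes acting on $u^0\in\Omega(V)$), so Lemma~\ref{grading-lemma} applies and yields
\[
Re\bigl(\deg(u_mw)\bigr)>0 \qquad\text{or}\qquad \deg(u_mw)=0.
\]
By Definition~\ref{degree}, $\deg(u_mw)=(|u|-m-1)+\deg(w)=(\gamma-m-1)+0=\gamma-m-1$, so this dichotomy reads $m<Re(\gamma)-1$ or $m=\gamma-1$, which is exactly the condition in Definition~\ref{omegaV}. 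Hence $w\in\Omega(V)$, and summing over the degree-$0$ monomials gives $v\in\Omega(V)$.

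The only genuinely delicate point is the reduction in the second paragraph to a single homogeneous generating monomial of degree $0$: this is precisely where the $\CR$-graded hypothesis — i.e. the well-definedness of $deg$ and the decomposition of Remark~\ref{rmkVadmmod} — is essential, and without it $V(0)$ would not even be a well-defined subspace. Once that reduction is in place, the conclusion is a one-line application of Lemma~\ref{grading-lemma} to the extended monomial $u_mw$, so I do not anticipate any substantive obstacle beyond carefully justifying that reduction.
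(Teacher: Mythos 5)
Your argument is correct and follows essentially the same route as the paper's proof: the inclusion $\Omega(V)\subseteq V(0)$ from the definition of $deg$, reduction of a general degree-zero element to degree-zero generating monomials via the direct-sum decomposition, and then an application of Lemma~\ref{grading-lemma} to the extended monomial $u_m v^k_{n_k}\cdots v^1_{n_1}u^0$ to read off the defining condition of $\Omega(V)$. Your explicit remarks that $\Omega(V)$ is a subspace and that the nonzero-degree monomials are discarded by projecting onto $V(0)$ make precise a step the paper treats more informally, but they do not change the substance of the argument.
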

 \begin{proof}
Note that $\Omega(V)\subset V(0)$ follows from Definition \ref{degree}. Therefore, we need to show next that $ V(0)\subset \Omega(V) $. Namely, we need to prove that if $v\in V$ satisfies $deg(v)=0$ then $v$ must be a vector in $\Omega(V)$.
We first prove this fact for vectors of the form
\[v^k_{n_k} \cdots v^1_{n_1}u^0\] 
where $v^1, \dots, v^k \in V$,  for $k \in \mathbb{N},$
 $n_1, \dots, n_k \in \mathbb{Z}$,   and  $u^0 \in \Omega(V)$:

 Assume that $ v^k_{n_k} \cdots v^1_{n_1}u^0\neq 0$ and that  $deg( v^k_{n_k}\cdots v^1_{n_1}u^0)=\sum_{j=1}^k\left(|v^j|-n_j-1\right)=0$. We want to show that $ v^k_{n_k} \cdots v^1_{n_1}u^0 \in \Omega(V)$. Let $u\in V$ and $n\in \mathbb{Z}$ be such that $u_n  v^k_{n_k}\cdots v^1_{n_1}u^0\neq 0$. Then, using Lemma  \ref{grading-lemma} for  $u_n v^k_{n_k}\cdots v^1_{n_1}u^0$, we have that either $|u|-n-1+\sum_{j=1}^k\left( |v^j|-n_j-1\right)=0$ or $Re\left(|u|-n-1+\sum_{j=1}^k\left( |v^j|-n_j-1\right)\right)>0.$ Since by assumption $\sum_{j=1}^k\left( |v^j|-n_j-1\right)=0$ it follows that either $n=|u|-1$ or $n<Re( |u|-1)$. Therefore, $ v^k_{n_k}\cdots v^1_{n_1}u^0 \in \Omega(V)$ if $deg( v^k_{n_k}\cdots v^1_{n_1}u^0)=0$.  
 
 Now, let $v$ be any vector in $V$. Since $V$ is an $\Omega$-generated $\mathbb{C}$-graded vertex algebra we know that $v$ is a linear combination 
 $v=\sum_{j=1}^m c_j \tilde{v}^j$ where $c_j\in \mathbb{C}$ and each $ \tilde{v}^j$ is an
 element of the form $ v^k_{n_k}\cdots v^1_{n_1}u^0$ with  
 $n_1, \dots, n_r \in \mathbb{Z}$, and  $u^0 \in \Omega(V).$ If $deg(v)=0$ then we have that $\sum_{j=1}^m deg(\tilde{v}^j) =0$ where each $deg( \tilde{v}^j)$ satisfies either
 \[\deg( \tilde{v}^j)=0 \ \mbox{ or } \ Re(\deg(\tilde{v}^j))>0\] 
 by  Lemma \ref{grading-lemma}. Therefore, we obtain that $deg( \tilde{v}^j)=0$ for each $1\leq j\leq m$. By the argument above we have that each $\tilde{v}^j$ is an element in $\Omega(V)$ which implies that $v\in \Omega(V).$ 
\end{proof}

\begin{rmk} \label{rms} In \cite{LM} all  $\Omega$-generated $\mathbb{C}$-graded vertex algebras are assumed to be $\CR$-graded and referred to as $\mathbb{C}$-graded vertex algebras instead.
\end{rmk}

An  $\Omega$-generated $\mathbb{C}$-graded vertex algebra resembles a vertex operator algebra (with a possibly weaker non integer grading) in that it has a weight operator $L$ defined as in equation (\ref{Ldef}) which generalizes the zero Virasoro mode $L(0)$. Since we need to work in the $\mathbb{C}$-graded vertex algebra setting, we introduce the definition of a $\mathbb{C}$-graded conformal vertex algebra and show how it generalizes the concept of a conformal vertex  algebra.
\begin{dfn}\cite{HLZ} \label{ccva} A \textit{$\mathbb{C}$-graded conformal vertex algebra} $(V,Y, \vac, \omega)$ consists of a $\mathbb{C}$-graded vertex algebra
\[V=\bigoplus_{\lambda \in \mathbb{C}}V_{\lambda} \]
together with a distinguished vector $\omega \in V_2$ that satisfies the Virasoro relations: 

\vspace{0.2cm}
\noindent 
(i) $[L(n),L(m)]=(n-m)L(m+n)+\frac{1}{12}(n^3-n)\delta_{n,-m}c$ for $n,m \in \mathbb{Z}$, where $L(n)=:\omega_{n+1}$ for $n\in \mathbb{Z}$ and $c\in \mathbb{C}$ called the central charge of $V$, 

\noindent 
(ii) The $L(-1)$-derivative property: for any $v\in V$,
			$Y(L(-1)v,x)=\dfrac{d}{dx} Y(v,x),$

\noindent (iii) The $L(0)$-grading property: for $\mu \in \mathbb{C}$ and $v\in V_{\mu}$,
			    $L(0)v=\mu v=(wt \ v)v$.

\noindent A $\mathbb{Z}$-graded conformal vertex algebra is defined in the obvious way.	
\end{dfn}

\begin{dfn} A {\it vertex operator algebra} $(V,Y,\vac,\omega)$ is a $\mathbb{Z}$-graded conformal vertex algebra 
\[V=\bigoplus_{n \in \mathbb{Z}}V_{n}\]
such that \\
\noindent 
(i) $V_{n}=0$ for $n$ sufficiently negative, and

\noindent (ii) dim $V_{n}<\infty$ for $n\in \mathbb{Z}.$
\end{dfn}
{ Since the $\mathbb{Z}$-grading condition for a vertex operator algebra is too restrictive to work with the Weyl vertex algebras of all central charges, we will need the following modified concept of   an $\Omega$-generated $ \CR$-graded vertex operator algebra:}

\begin{dfn}\label{define-C-graded-VOA} A {\it  $\Omega$-generated $\CR$-graded vertex operator algebra}
is an $\Omega$-generated  $\CR$-graded vertex algebra $V=\bigoplus_{\lambda\in\mathbb{C}}V_{\lambda}$ that is also a $\mathbb{C}$-graded conformal vertex algebra with the following  additional properties:

\vspace{0.1cm}

\noindent (i) For $\lambda\in\mathbb{C}$, $V_{\lambda}=\{v\in V~|~L(0)v=\lambda v\} $ and $\dim V_{\lambda}<\infty$. 
\vspace{0.1cm}

\noindent (ii) $Re(\lambda)\geq | Im(\lambda)|$ for all but finitely many $\lambda\in Spec_VL(0)$.
\end{dfn}

\begin{rmk}
 Condition (ii) above, which may appear unnatural, guarantees that there are only finitely many eigenvalues $\lambda$ of $L(0)$ such that $Re(\lambda)<0$ and $V_\lambda\neq 0$. As explained in \cite{DM}, if an $\Omega$-generated $\CR$-graded vertex operator algebra is $\mathbb{R}$-graded (namely, if $V_{\lambda}\neq 0$ then $\lambda\in \mathbb{R}$) condition (ii) guarantees the usual lower boundedness condition that $V_r=0$ for all $r$ sufficiently negative.
\end{rmk}

\begin{rmk} Since $\omega\in V_2$, we can conclude that any  $\Omega$-generated $ \CR$-graded vertex operator algebra contains the vertex operator algebra generated by $\omega$.
\end{rmk}

The following are the relationships between the various types of vertex algebras introduced in this section: 
\begin{eqnarray*}
&&  \Omega VOA(\CR {\mathcal(V)})\subset \big( Conf(\mathcal{\mathbb{C}(V)})\cap  { \Omega(\CR(\mathcal{V}))} \big)\subset  \Omega(\mathcal{\mathbb{C}(V)})\subset \mathcal{\mathbb{C}(V)}\subset\mathcal{V},\\
&&\mathcal{VOA}\subset Conf(\mathcal{\mathbb{Z}(V)})\subset \mathcal{\mathbb{Z}(V)}\subset\mathcal{V}\\
\end{eqnarray*}
Here 
\begin{eqnarray*}
&&\mathcal{V}=\text{set of vertex algebras}\\
&&\mathcal{\mathbb{C}(V)}= \text{set of $\mathbb{C}$-graded vertex algebras}\\
&&\mathcal{\mathbb{Z}(V)}= \text{set of $\mathbb{Z}$-graded vertex algebras}\\
&& \Omega(\mathcal{\mathbb{C}(V)})= \text{set of  $\Omega$-generated  $\mathbb{C}$-graded vertex algebras}\\
&&\Omega(\CR(\mathcal{V}))= \text{set of  $\Omega$-generated  $\CR$-graded vertex algebras}\\
&&\Omega(\mathcal{\mathbb{Z}(V)})= \text{set of  $\Omega$-generated $\mathbb{Z}$-graded vertex algebras}\\
&&Conf(\mathcal{\mathbb{C}(V)})= \text{set of $\mathbb{C}$-graded conformal vertex algebras}\\
&&Conf(\mathcal{\mathbb{Z}(V)})= \text{set of $\mathbb{Z}$-graded conformal vertex algebras}\\
&&\mathcal{VOA}=\text{set of vertex operator algebras}\\
&& \Omega VOA( \CR\mathcal{(V)})= \text{set of  $\Omega$-generated  $\CR$-graded vertex operator algebras}\\
\end{eqnarray*}
\subsection{2.2. Modules for $\mathbb{C}$-graded vertex algebras}
Next, we  introduce various types of representations of $\mathbb{C}$-graded vertex algebras, again following or motivated by, for instance \cite{LL, HLZ, LM}. 
 \noindent We begin by recalling the definition of a weak $V$-module for a  fixed vertex  algebra $(V, Y, \mathbf{1})$ as  presented in \cite{LL}:

\begin{dfn} \label{weak}
 Let $V$ be a vertex algebra. A \textit{weak $V$-module} is a vector space $W$
		equipped with a vertex operator map
		\begin{equation*}
		\begin{aligned}
		Y_W \colon V &\rightarrow (\mathrm{End}\,W)[[x,x^{-1}]],\\
		v &\mapsto Y_W(v,x)=\sum_{n\in\mathbb{Z}} v_n^W\,x^{-n-1},
		\end{aligned}
		\end{equation*}
		satisfying the following axioms:
		\begin{enumerate}[label=\textup{(\roman*)}]
			\item
			The lower truncation condition: for  $v \in V$ and $w \in W$,  $Y_W(v,x)w$ has only finitely many
			terms with
			negative powers in $x$.
			
			\item
			The vacuum property: $Y_W(\mathbf{1},x)$ is the identity endomorphism $1_W$ of $W$.
			
			\item
			The Jacobi identity: for  $v_1,v_2\in V$,
\begin{multline*}
			x_0^{-1}\delta\left(\dfrac{x_1-x_2}{x_0}\right) Y_W(v_1,x_1)Y_W(v_2,x_2) 
			- x_0^{-1}\delta\left(\dfrac{-x_2+x_1}{x_0}\right)Y_W(v_2,x_2)Y_W(v_1,x_1)  \\
			= x_2^{-1}\delta\left(\dfrac{x_1-x_0}{x_2}\right)Y_W(Y(v_2,x_0)v_1,x_2).
			\end{multline*}
		\end{enumerate}
\end{dfn}

\begin{rmk}  In \cite{LL}, the notion of weak $V$-module {given above} is called  a $V$-module  for $V$ a vertex algebra, but if $V$ has, for instance the structure of a vertex operator algebra, then the structure $V$-module defined above is called in \cite{LL} a weak module for the vertex operator algebra structure of $V$.  Since we will mainly be concerned with extra ``vertex operator algebra"-type structures on $V$, to emphasize the differences between the weaker notion of a module for a vertex algebra versus a module for a vertex operator algebra, we have chosen to call these modules ``weak" throughout.
\end{rmk}
\begin{prop}\label{submodule} \cite{LL} Let $V$ be a vertex algebra and let $D$ be the linear map on $V$ given by $Dv=v_{-2}\mathbf{1}$  as in Remark \ref{D}. Let $W$ be a weak $V$-module. 
\begin{enumerate}
    \item Then \begin{equation*}
			Y_W(Dv,x)=\dfrac{d}{dx} Y_W(v,x).
			\end{equation*}
			\item Let $T$ be a subset of $W$ and let $\langle T\rangle$  denote the  submodule generated by $T$. Then \begin{equation*}
			    \langle T\rangle=Span\{v_nt~|~v\in V,~n\in\mathbb{Z}, t\in T\}.
			\end{equation*}
			\end{enumerate}
\end{prop}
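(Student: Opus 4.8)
The statement to prove consists of two parts: (1) the $D$-derivative property on a weak module, $Y_W(Dv,x) = \frac{d}{dx}Y_W(v,x)$, and (2) the description of the submodule generated by a subset $T$ as $\langle T\rangle = \mathrm{Span}\{v_n t \mid v\in V, n\in\mathbb{Z}, t\in T\}$.

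For part (1), the plan is to extract the relevant component identity from the Jacobi identity for the weak module. First I would take the Jacobi identity with $v_2 = v$ and $v_1 = \mathbf{1}$, and use the vacuum property $Y_W(\mathbf{1},x)=1_W$ together with the fact that in the vertex algebra $V$ itself we have $Y(v,x_0)\mathbf{1} = v + (Dv)x_0 + \cdots$ (the creation property and the definition $Dv = v_{-2}\mathbf{1}$). Taking $\mathrm{Res}_{x_0}$ of the Jacobi identity after extracting the coefficient of an appropriate power of $x_0$ will isolate $Y_W(v_{-2}\mathbf{1},x_2) = Y_W(Dv,x_2)$ on the right-hand side, while the left-hand side collapses, via the $\delta$-function identity $\frac{\partial}{\partial x_2}x_0^{-1}\delta(\frac{x_1-x_2}{x_0})$-type manipulation, to $\frac{d}{dx_2}Y_W(v,x_2)$. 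This is the standard argument (it appears in \cite{LL}); I would just cite the computation rather than grind through the $\delta$-function bookkeeping. Alternatively, and perhaps more cleanly, one can invoke that $Y_W(u,x)$ satisfies weak commutativity/associativity and deduce the $D$-derivative property from the $n=-2$ component of $Y(v,x_0)\mathbf{1}$ via the iterate formula.

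For part (2), let $N := \mathrm{Span}\{v_n t \mid v\in V,\, n\in\mathbb{Z},\, t\in T\}$. One inclusion, $N \subseteq \langle T\rangle$, is immediate since any submodule containing $T$ is closed under all the operators $v_n$. For the reverse inclusion $\langle T\rangle \subseteq N$, the key point is to show $N$ is itself a weak $V$-submodule, i.e. that $N$ is closed under the action of every mode $u_m$ for $u\in V$, $m\in\mathbb{Z}$; since $T\subseteq N$ (using $t = \mathbf{1}_{-1}t$ and the vacuum property), minimality of $\langle T\rangle$ then gives $\langle T\rangle \subseteq N$. To see $N$ is closed: take a spanning element $v_n t$ and apply $u_m$; by the iterate (Borcherds) commutator formula coming from the Jacobi identity on $W$,
\[
u_m (v_n t) = \sum_{i\geq 0}\binom{m}{i}(u_i v)_{m+n-i}\,t \;+\; \sum_{i\geq 0}(-1)^i\binom{m}{i}\,v_{m+n-i}\big(u_{i}\,\text{-- rearranged}\big)\cdots
\]
— more precisely, the standard commutator formula $[u_m, v_n] = \sum_{i\ge0}\binom{m}{i}(u_i v)_{m+n-i}$ on $W$ gives $u_m(v_n t) = v_n(u_m t) + \sum_{i\ge 0}\binom{m}{i}(u_iv)_{m+n-i}t$. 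The second sum is a finite sum (lower truncation) of elements of the form $w_k t \in N$. For the first term $v_n(u_m t)$: if we knew $u_m t\in N$ we would still need another application of a mode, so instead I would set up the argument differently — show directly that $N$ is a submodule by checking it is a $V$-stable subspace using that the submodule generated by a single vector $t$ is $\mathrm{Span}\{v_n t\}$, which follows because applying the commutator formula repeatedly always reduces the number of nested modes acting on $t$ down to one. Concretely, by induction on the number of modes, $v^k_{n_k}\cdots v^1_{n_1} t$ lies in $\mathrm{Span}\{w_m t\}$: the innermost application is already of that form, and pulling an outer mode through using the commutator formula either moves it past (replacing $t$-action, handled by induction) or produces $(u_iv)$-type single modes on $t$.

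The main obstacle I expect is organizing part (2) cleanly: the naive approach "$N$ is closed under modes" is slightly circular because closing $v_n(u_m t)$ seems to require $u_m t \in N$ and then reapplying $v_n$, which takes us outside a single-mode expression. The clean fix is the induction-on-nesting-depth argument sketched above, driven entirely by the Borcherds commutator identity and lower truncation, reducing any iterated action $v^k_{n_k}\cdots v^1_{n_1}t$ to a linear combination of single-mode actions $w_m t$; this is exactly the content of Proposition 4.5.6 (or the analogous statement) in \cite{LL}, so in the write-up I would present the induction but cite \cite{LL} for the underlying commutator formula on weak modules.
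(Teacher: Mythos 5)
The paper gives no argument for this proposition at all --- it is quoted verbatim from \cite{LL} (essentially Propositions 4.5.5 and 4.5.6 there) --- so the only comparison to make is against the standard proof you are implicitly reconstructing. Your sketch of part (1) is the standard derivation and is fine.

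Part (2), however, contains a genuine gap: the commutator formula cannot do the work you assign to it. The identity
$u_m(v_n t) = v_n(u_m t) + \sum_{i\ge 0}\binom{m}{i}(u_i v)_{m+n-i}\,t$
does not reduce nesting depth; it trades the depth-two expression $u_m v_n t$ for the depth-two expression $v_n u_m t$ plus depth-one corrections, and applying the commutator formula again to $v_n u_m t$ simply returns you to $u_m v_n t$ modulo $N$. So ``applying the commutator formula repeatedly always reduces the number of nested modes acting on $t$ down to one'' is false, and the induction on nesting depth you propose never terminates. You correctly smell the circularity, but the fix you offer does not escape it, because the commutator formula is only the $\mathrm{Res}_{x_0}$ half of the Jacobi identity and that half genuinely does not suffice. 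The tool that closes $N=\mathrm{Span}\{v_n t\}$ under modes is \emph{weak associativity}, i.e.\ the $\mathrm{Res}_{x_1}$ half of the Jacobi identity combined with the lower truncation condition: choosing $k\ge 0$ with $u_j w=0$ for $j\ge k$ (where $w=v_n t$ or, more efficiently, $w=t$), one has
\begin{equation*}
(x_0+x_2)^{k}\,Y_W(u,x_0+x_2)Y_W(v,x_2)w \;=\; (x_0+x_2)^{k}\,Y_W(Y(u,x_0)v,x_2)w,
\end{equation*}
and since the left side can be divided by $(x_0+x_2)^{k}$ (expanded in nonnegative powers of $x_2$), every coefficient $u_m v_n w$ is exhibited as a \emph{finite} linear combination of elements $(u_i v)_j w$, which are single modes of elements of $V$ applied to $w$. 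With that in hand, $N$ contains $T$ (via $t=\mathbf{1}_{-1}t$) and is stable under all modes, so $N=\langle T\rangle$. If you intend to cite \cite{LL} for the mechanism, cite weak associativity, not the commutator formula.
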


Next, we recall the notion of a module over an $\Omega$-generated $\mathbb{C}$-graded vertex algebra $V$ as introduced in \cite{LM}.

\begin{dfn}\cite{LM} \label{C-graded-module-def}
Let $V$ be an $\Omega$-generated $ \CR$-graded vertex 
algebra. A  {\it $\mathbb{C}_{Re>0}$-graded  $V$-module} $ W$ is a weak $V$-module with
a grading of the form 
\[W=W(0) \bigoplus_{\substack{\tau\in\mathbb{C},\\ Re(\tau)>0}}   W(\tau)\] 
such that $W(0)\neq 0$ and for any homogeneous $v\in V_{\lambda}$, one has
$$v^W_nW(\tau)\subseteq W(\tau+\lambda-n-1).$$
 We say that a homogeneous element $w \in W(\tau)$ has {\it degree $\tau$. }
\end{dfn}

\begin{rmk}  In \cite{LM},  $\mathbb{C}_{Re>0}$-graded modules are referred to as admissible modules.
\end{rmk}

\begin{dfn}  Let $V$ be   an $\Omega$-generated $\CR$-graded vertex algebra and let \[W=W(0) \bigoplus_{\tau\in\mathbb{C}, Re(\tau)>0}W(\tau)\] 
be a   $\mathbb{C}_{Re>0}$-graded
$V$-module.  We define
\begin{align*}
\Omega(W) &= \{ w \in W \; | \; \mbox{for any $v \in V$ if
$v^W_nw\neq 0$ then either $|w|=|v^W_nw|$ or $Re(|w|)<Re(|v^W_nw|)$\} }\\
\end{align*}
\end{dfn}

Note in particular, that $W(0) \subset \Omega(W)$.  Moreover, $\Omega(W)$ consists of the vectors in $W$ that are annihilated by the action of any mode of $V$ that lowers the real part of its weight, similarly to  $\Omega(V)$ in Definition \ref{omegaV}.

 The following result was stated in \cite{LM} for  $\Omega$-generated $\mathbb{C}$-graded vertex algebras,  where it was assumed that the degree grading is well defined for these types of vertex algebras.  Here, we give the proof for the case in which $V$ is an $\Omega$-generated $\CR$-graded vertex algebra:

\begin{prop} [cf.\cite{LM}] \label{strongly-generated-LM-prop}
${}$
\begin{enumerate}
\item Any  $\Omega$-generated $\CR$-graded vertex algebra $V$ is  a $\mathbb{C}_{Re>0}$-graded $V$-module. 
\item If $W$ is a simple    $\mathbb{C}_{Re>0}$-graded $V$-module then  $\Omega(W)=W(0)$.  

\end{enumerate}
\end{prop}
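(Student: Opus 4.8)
The plan is to prove each part separately, using the $\Omega$-generation structure and the degree grading in an essential way.

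\medskip

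\noindent\emph{Part (1).} First I would show that any $\Omega$-generated $\CR$-graded vertex algebra $V$, with its degree grading $V = V(0) \oplus \bigoplus_{Re(\lambda)>0} V(\lambda)$ from Remark~\ref{rmkVadmmod}, satisfies all the axioms of a $\mathbb{C}_{Re>0}$-graded $V$-module over itself. The weak module axioms (lower truncation, vacuum property, Jacobi identity) are immediate since $V$ is a vertex algebra acting on itself via $Y$. For the grading compatibility condition $v^W_n W(\tau) \subseteq W(\tau + |v| - n - 1)$, I would invoke Lemma~\ref{grading-lemma} together with Definition~\ref{degree}: if $v \in V_\lambda$ is homogeneous and $w \in V(\tau)$ with $w = v^k_{n_k}\cdots v^1_{n_1} u^0$, then $v_n w = v_n v^k_{n_k}\cdots v^1_{n_1} u^0$ has degree $(|v| - n - 1) + \sum_j(|v^j| - n_j - 1) = (|v|-n-1) + \tau$ by the very definition of degree, so it lands in $V(\tau + \lambda - n - 1)$; extending by linearity handles general $w$. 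Finally $W(0) = \Omega(V) \neq 0$ since $\Omega(V)$ generates $V$, and the real-part condition $Re(\tau) \geq 0$ on the grading is exactly Lemma~\ref{grading-lemma}. So $V$ is a $\mathbb{C}_{Re>0}$-graded $V$-module.

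\medskip

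\noindent\emph{Part (2).} The containment $W(0) \subseteq \Omega(W)$ is already noted in the text, so the content is the reverse inclusion under the simplicity hypothesis. The plan is: let $w \in \Omega(W)$ be homogeneous of degree $\tau$ with $Re(\tau) > 0$, and derive a contradiction with simplicity (or show $w = 0$). By Proposition~\ref{submodule}(2), the submodule $\langle w \rangle$ generated by $w$ is $\mathrm{Span}\{v^W_n w \mid v \in V, n \in \mathbb{Z}\}$. Since $w \in \Omega(W)$, every nonzero $v^W_n w$ has degree $|v^W_n w|$ with $Re(|v^W_n w|) \geq Re(\tau) > 0$ — in other words, the submodule $\langle w \rangle$ has trivial intersection with $W(0')$ for all components of real part $\leq 0$, and more to the point its degree-zero part $\langle w\rangle \cap W(0)$ is zero (in fact all its homogeneous pieces have real part $\geq Re(\tau) > 0$). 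But a nonzero $\mathbb{C}_{Re>0}$-graded submodule must contain something — and by the structure theory one argues it must meet the minimal-real-part component; since $W$ is simple, $\langle w \rangle = W$, forcing $W(0) \subseteq \langle w \rangle$, contradicting that $\langle w\rangle$ has no degree-zero part unless $w = 0$. Hence $\Omega(W) = W(0)$.

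\medskip

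\noindent The step I expect to be the main obstacle is pinning down precisely why a nonzero graded submodule of $W$ must intersect $W(0)$ nontrivially — equivalently, why the proper submodule $\langle w \rangle$ generated by $w \in \Omega(W) \setminus W(0)$ cannot equal $W$. This requires knowing that $\langle w \rangle$ inherits a $\mathbb{C}_{Re>0}$-type grading whose degree-zero piece is genuinely empty (which follows from the $\Omega(W)$ condition: $w$ can only be raised or kept level in real part, never lowered below $Re(\tau) > 0$), combined with the fact that $W$ itself is generated over $V$ by $W(0)$ in a way that forces any submodule equal to $W$ to contain $W(0)$. I would handle this by showing directly that $\langle w\rangle = \bigoplus_{\sigma} (\langle w \rangle \cap W(\sigma))$ with every contributing $\sigma$ satisfying $Re(\sigma) \geq Re(\tau)$, so $\langle w \rangle \cap W(0) = 0$; then if $\langle w\rangle = W$ we would get $W(0) = 0$, contradicting $W(0) \neq 0$ in Definition~\ref{C-graded-module-def}. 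Therefore $\langle w\rangle \neq W$, and by simplicity $\langle w \rangle = 0$, i.e. $w = 0$, proving $\Omega(W) \subseteq W(0)$.
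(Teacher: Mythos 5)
Your proposal is correct and follows essentially the same route as the paper: part (1) is read off from the degree grading of Remark~\ref{rmkVadmmod}, and part (2) shows that the submodule generated by a vector $w \in \Omega(W)$ of positive real-part degree is contained in $\bigoplus_{Re(\tau)>0}W(\tau)$, hence misses $W(0)\neq 0$, hence is a proper submodule and therefore zero by simplicity. The only detail the paper adds beyond your sketch is the reduction of a general (possibly non-homogeneous) $u \in \Omega(W)$ to this case, by writing $u = w' + w''$ with $w' \in W(0) \subseteq \Omega(W)$ and observing that $w'' = u - w'$ lies in $\Omega(W) \cap \bigoplus_{Re(\tau)>0}W(\tau) = 0$.
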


 \begin{proof} The first statement follows directly from the degree grading in Definition \ref{degree} on $V$  together with Remark \ref{rmkVadmmod} and the definition of a $\mathbb{C}_{Re>0}$-graded $V$-module.  

 To prove the second statement, we first show that if $W=W(0)\bigoplus_{\tau\in\mathbb{C}, Re(\tau)>0}W(\tau)$ is a simple $\mathbb{C}_{Re>0}$-graded $V$-module then, $\Omega(W)\cap \left( \bigoplus_{\tau\in\mathbb{C}, Re(\tau)>0}W(\tau)\right)=0.$
To see this let $w\in\Omega(W)\cap (\bigoplus_{\tau\in\mathbb{C}, Re(\tau)>0}W(\tau))$. Then $\langle w\rangle =Span\{v_n^W w~|~v\in V, n\in\mathbb{Z}\}\subset \bigoplus_{\tau\in\mathbb{C}, Re(\tau)>0}W_{\tau}$ because $w\in \Omega(W)$ and so in particular $Re(|v_n^Ww|)\geq Re(|w|)>0$ for every $v \in V, n \in \mathbb{Z}$ such that $v_n^Ww\neq 0$.  Since $\langle w\rangle$ is a proper $V$-submodule $\langle w\rangle \varsubsetneq W$, we can conclude that $\langle w \rangle=\{0\}$. In particular, $w=0$ and we have shown that $\Omega(W)\cap \left( \bigoplus_{\tau\in\mathbb{C}, Re(\tau)>0}W(\tau)\right)=0.$ 

Finally, we show that $\Omega(W)=W(0)$. Let $u\in \Omega(W)$. Since $u\in W$ we can write $u=w'+w''$ for $w'\in W(0)$ and $w''\in \bigoplus_{\tau\in\mathbb{C}, Re(\tau)>0}W(\tau)$. Since $w''=u-w'$ and $W(0)\subseteq \Omega(W)$, we can conclude that $w''\in \Omega(W)$. Moreover, $w''\in \Omega(W)\cap\bigoplus_{\tau\in\mathbb{C}, Re(\tau)>0}W(\tau)$ which by our previous argument is $0$. This implies that $w''=0$ and $u=w' \in W(0)$. Hence $\Omega(W)=W(0)$. 
\end{proof}

\begin{dfn}\label{ordinary module} Let $V$ be an $\Omega$-generated $ \CR$-graded vertex operator algebra. An {\it ordinary $V$-module} $W$ is a weak $V$-module that admits a decomposition into generalized eigenspaces via the spectrum of $L_W(0)$ as follows:

\noindent (i) $W=\bigoplus_{\lambda\in\mathbb{C}}W(\lambda)$ where $W(\lambda)=\{w\in W~|~L_W(0)w=\lambda w\}$. 

\noindent (ii) $\dim W(\lambda)<\infty$ for all $\lambda\in\mathbb{C}$.

\noindent (iii) $Re(\lambda)>0$ for all but finitely many $\lambda\in Spec L_W(0)$.
\end{dfn}
 Finally, we introduce the notion of rationality for the representations of an $\Omega$-generated $ \CR$-graded vertex operator algebra:
\begin{dfn} Let $V$ be an $\Omega$-generated $ \CR$-graded vertex operator algebra. $V$ is called {\em rational} if the category of    $\mathbb{C}_{Re>0}$-graded $V$-modules is semisimple,  i.e. every $\mathbb{C}_{Re>0}$-graded $V$-module is completely reducible, i.e. the sum of simple $ \CR$ modules.
\end{dfn}
\section{3. The Weyl vertex algebra: Classification of its $\mathbb{C}$-graded structures} \label{weyl-section}

In this section, we introduce the rank one Weyl vertex algebra, denoted $M$, with a family of conformal elements $\omega_\mu$ parameterized by $\mu \in  \mathbb{C}$, following, for instance \cite{AP} (see also \cite{L}).  We denote $M$ with the conformal structure by $( _\mu M, \omega_\mu)$ or just $_\mu M$.  We discuss the various gradings and associated refined vertex algebra structures imposed on the rank one Weyl vertex algebra $M$ by the choice of $\mu$.  The rank $n$ Weyl vertex algebra, for $n \in \mathbb{Z}_+$, is then the $n$-fold tensor product of $M$.

\begin{dfn}

Let $\mathcal{L}$ be the infinite-dimensional Lie algebra with generators $K, a(m)$, and $a^*(n)$ with $m,n \in \mathbb{Z}$ such that $K$ is in the center and the bracket is given by
\[ \left[ a(m), a^* (n) \right] = \delta_{m+n,0} K.\]
We define  the {\em  rank one Weyl algebra}  $\mathcal{A}_1$ to be the quotient
$$\mathcal{A}_1= \frac{\mathcal{U}\mathcal{(L)}}{\langle K-1\rangle },$$
where  $\mathcal{U} \mathcal{(L)}$ denotes the  universal enveloping algebra of $\mathcal{L}$ and $\langle K - 1\rangle$ is the two sided ideal generated by $K - 1.$ 

  We have that  $\mathcal{A}_1$  is an associative algebra with generators $a(m)$, $a^*(n)$,  for $m, n\in\mathbb{Z}$, and relations
\begin{eqnarray} \label{com}
[a(m),a^*(n)] & = & \delta_{m+n,0}  \\
 \left[a(m),a(n) \right] & = & [a^*(m),a^*(n)] \ = \ 0
\end{eqnarray}
for all $m,n\in\mathbb{Z}$. 
\end{dfn}

The Weyl algebra $\mathcal{A}_1$ has a countably infinite family of automorphisms, called {\it spectral flow} automorphisms given by 
\begin{equation}\label{Weyl-auto}
\rho_s: \mathcal{A}_1  \longrightarrow  \mathcal{A}_1, \quad a(n)  \mapsto  a(n+s), \quad a^*(n)  \mapsto  a^*(n-s),
 \end{equation}
for $s \in \mathbb{Z}$, as well as the automorphism
\begin{equation}
\varphi_t : \mathcal{A}_1  \longrightarrow  \mathcal{A}_1, \quad a(n)  \mapsto  ta^*(n), \quad a^*(n)  \mapsto  -t^{-1}a(n),
\end{equation}
for $t\in \mathbb{C}^\times$.

The  (rank one) Weyl vertex algebra  $M$ can be realized as an induced module for the Lie algebra $\mathcal{L}$ as follows. We first fix a triangular decomposition  of $\mathcal{L}=\mathcal{L}^-\oplus\mathcal{L}^0\oplus\mathcal{L}^+ $ where
\begin{align*}
 \mathcal{L}^{-}&= \mathrm{span}_{\mathbb{C}}\{ a(-n), a^* (-m) \, | \, n\geq1 ,  m \geq 0 \} \\
 \mathcal{L}^{0}&= \mathrm{span}_{\mathbb{C}}\{  K \} \\
 \mathcal{L}^{+}&= \mathrm{span}_{\mathbb{C}}\{ a(n), a^* (m+1) \in \mathcal{L} \, | \, n \geq 0, m\geq 0 \}. 
\end{align*}
(see for instance \cite{RW} where this is called the normal triangular decomposition).
Next, we give the one dimensional vector space $\mathbb{C} \bf 1$  the $\mathcal{L}^{0}\oplus \mathcal{L}^{+}$-module structure
 given by
\begin{align*}
a(0){\bf 1}&=0\\
K\vac&=\vac\\
a(n){\bf 1}&=0 \text{ for }n > 0, \\
a^*(m+1){\bf 1}&=0  \text{ for }m \geq 0, 
\end{align*}
 and define $M$ to be  the induced module $$M = \mathcal{U}(\mathcal{L}) \otimes_{\mathcal{U}(\mathcal{L}^0\oplus \mathcal{L}^+ )} \mathbb C \bf 1. $$
\noindent Then, $M$ is a simple Weyl module and, as a vector space,  $ M\cong \mathbb{C}[a(-n),a^*(-m)~|~n>0, ~m\geq 0]$. 
There is a unique vertex algebra structure on  $M$,  (see for instance Theorem 5.7.1 in \cite{LL} or Lemma 11.3.8 in \cite{FBZ})  given by $( M,Y,{\bf 1})$ with vertex operator map $Y:  M \rightarrow \End(M)[[z,z^{-1}]]$ such that 
\begin{eqnarray}
&Y(a(-1){\bf 1},z)=a(z),& Y(a^*(0){\bf 1},z)=a^*(z), \nonumber \\
&a(z)=\sum_{n\in\mathbb{Z}}a(n)z^{-n-1},& a^*(z)=\sum_{n\in\mathbb{Z}}a^*(n)z^{-n}\label{genweyl}.
\end{eqnarray}
In particular, $$Y(a(-1)a^*(0){\bf 1},z)=:a(z)a^*(z): $$
 where $:a(z)a^*(z):$ denotes the ordered product of the fields $a(z)$ and $a^*(z)$ given by
\begin{align*}
 :a(z)a^*(z):=a(z)^+a^*(z)+a^*(z)a(z)^-,
\end{align*}
with $a(z)^+=\sum_{n\leq -1}a(n)z^{-n-1}$, $a(z)^-=\sum_{n\geq 0}a(n)z^{-n-1}$.

In terms of the operator product expansion of the vertex operators, i.e. the corresponding fields, we have 
\[ a(z) a^*(w) = \frac{1}{z-w} +  : a(z) a^*(w) :\]

Moreover, the map $Y:  M \rightarrow \End(M)[[z,z^{-1}]]$ is given by
\begin{align*}
Y(a(-m_1-1)a(-m_2-1)& \dots a(-m_k-1)a^*(-n_1)\dots a^*(-n_l)\vac, z)\\
& = \prod_{i=1}^k \frac{1}{m_i!} \prod_{j=1}^l \frac{1}{n_j!} :\partial^{m_1}a(z)\cdots  \partial^{m_k}a(z) \partial^{n_1}a^*(z)\cdots\partial^{n_l}a^*(z):  
\end{align*}
for $n_1, \dots n_k, m_1, \dots m_l \in \mathbb{Z}_{\geq 0}.$

\begin{rmk} ${}$ \label{bgrem}

\begin{enumerate}
    \item The fields $a(z)$ and $a^*(z)$ defined in (\ref{genweyl}) are usually denoted by $\beta(z)$ and $\gamma(z)$ in the Physics literature (up to a choice of sign) where the vertex algebra $M$ is referred to as the $\beta\gamma$ vertex algebra,  or $\beta \gamma$-system.

    \item Since for all $n\in \mathbb{Z}$, the $n$ modes of the fields $Y(a(-1)\vac, z)=a(z), Y(a^*(0)\vac)= a^*(z)$ satisfy 
\begin{align*}
    &(a(-1)\vac)_n=a(n)\\
    &(a^*(0)\vac)_n=a^*(n+1),
\end{align*}
we have that the set $T=\{a(-1)\vac, a^*(0)\vac\}$ is a set of strong generators for the vertex algebra $M$ in the sense of Definition \ref{sgen}. Namely, $M$ is spanned by the set of normally ordered monomials 
$$\{ :\partial^{k_1} \alpha^{i_1} \dots \partial^{k_l} \alpha^{i_k}: | \  k_1, \dots, k_l \geq 0, \  \alpha^{i_j}\in  T \}.$$ 
 Therefore, $M$ is  strongly finitely generated as a vertex algebra in the sense of Definition \ref{sgen}.
\end{enumerate}
\end{rmk}

From the simple relations between the modes of the strong generators $a(-1)\vac$ and $a(0)\vac$ given by (\ref{com}) together with Remark \ref{bgrem}, it is easy to see that $M$ is a simple vertex algebra.

Let $\beta:=a(-1)a^*(0){\bf 1}$. We set $\beta(z)=Y(\beta,z)=\sum_{n\in\mathbb{Z}}\beta(n)z^{-n-1}$. (We note that in \cite{AP}, there was a typo in the exponent of $z$ in the expansion of $\beta(z)$.)
 We note in particular, that in this notation $$\beta(-2)\vac=a(-2)a^*(0){\bf 1}+a(-1)a^*(-1){\bf 1}. $$
Then $\beta$ is a Heisenberg vector in $M$ of level $-1$. Namely,  for $n,m\in\mathbb{Z}$, we have 
$$[\beta(m),\beta(n)]=-m\delta_{m+n,0}. $$ as operators on $M$, and therefore
$$\beta(z)\beta(w)=-\frac{1}{(z-w)^2}+:\beta(z)\beta(w):$$ In addition, we have 
$$[\beta(m),a(n)]=-a(m+n), \quad \mbox{ and} \quad [\beta(m),a^*(n)]=a^*(m+n).$$

 We are interested in the possible $\mathbb{C}$-graded conformal vertex algebra structures on the vertex algebra $M$.  The vertex algebra $M$ admits a family of Virasoro vectors 
\begin{eqnarray} \label{conformal}
\omega_{\mu} &=& (1-\mu)a(-1)a^*(-1){\bf 1}-\mu a(-2)a^*(0){\bf 1} \qquad  \mbox{for $\mu\in\mathbb{C}$,} \nonumber \\
&=&  a(-1)a^*(-1){\bf 1}-\mu \left(a(-1)a^*(-1){\bf 1}+ a(-2)a^*(0){\bf 1}\right) \nonumber\\
&=&  a(-1)a^*(-1){\bf 1}-\mu \beta(-2)\vac,
\end{eqnarray}
of central charge 
\begin{equation} \label{central}
c_{\mu}=2(6\mu(\mu-1)+1) .   
\end{equation} 
 The corresponding Virasoro field is 
\begin{align}
L^{\mu}(z)=(1-\mu):a(z)\partial a^*(z):-\mu :\partial a(z) a^*(z):    
\end{align}
and it satisfies
\begin{align*}
    L^{\mu}(z) L^{\mu}(w)= \frac{1-6\mu+6\mu^2}{(z-w)^4}+\frac{2L^{\mu}(w)}{(z-w)^2}+\frac{\partial_w L^{\mu}(w)}{z-w}+: L^{\mu}(z) L^{\mu}(w):
\end{align*}

 This gives a $\mathbb{C}$ grading on $M$ as we give explicitly below, and we denote the particular $\mathbb{C}$-graded conformal vertex algebra structure on $M$ by
\[(_\mu M, Y, {\bf 1}, \omega_\mu), \]
or just $_\mu M$.

\begin{lem}\label{Weyl-iso-lemma}
The composition of the spectral flow $\rho_1$ and $\varphi_1$ automorphisms  of the $Weyl$ algebra lifts to give the following isomorphisms of $\mathbb{C}$-graded conformal vertex algebras: 
\begin{equation}
\varphi_1 \circ \rho_1 : (_\mu M, \omega_\mu) \stackrel{\cong}{\longrightarrow} (_{1 - \mu} M, \omega_{1 - \mu})
\end{equation}
given explicitly on $M$ by
\begin{multline*}
a(-m_1-1) \cdots a(-m_k-1)a^*(-n_1) \cdots a^*(-n_l) {\bf 1} \mapsto \\
(-1)^l  a(-n_1 -1) \cdots a(-n_l - 1) a^*(-m_1) \cdots a^*(-m_k) {\bf 1},
\end{multline*}
for $k,l \in \mathbb{N}$ and $m_i, n_j \in \mathbb{N}$. Or more generally, for the vertex algebra structure, letting $F = \varphi_1 \circ \rho_1$, we define
\begin{equation}\label{iso-structure}
F(u^1_{n_1}\cdots u^k_{n_k} {\bf 1})= [F(u^1)]_{n_1} \cdots   [F(u^k)]_{n_k}{\bf 1}
\end{equation}
for $u^j = a(-1)\vac$ or $a^*(0)\vac$ for $j = 1, \dots, k$, and $n_1, \dots, n_k \in \mathbb{Z}$.

Moreover, this is the only $\mathbb{C}$-graded conformal vertex algebra isomorphism between the $(_\mu M, \omega_\mu)$, for  distinct $\mu \in\mathbb{C}$.  In particular, the central charge $c_\mu = c_{1-\mu}$ completely determines $(_\mu M, \omega_\mu)$ up to isomorphism.  
\end{lem}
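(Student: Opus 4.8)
The plan is to obtain the isomorphism as the lift to $M$ of the Weyl-algebra automorphism $\theta := \varphi_1 \circ \rho_1$, to verify it is a vertex algebra map via the reconstruction theorem, to check that $\theta$ sends $\omega_\mu$ to $\omega_{1-\mu}$ by a two-term computation, and then to pin down the isomorphism class through the central charge.

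First I would record the action of $\theta$ on generators: $\theta(a(n)) = \varphi_1(a(n+1)) = a^*(n+1)$ and $\theta(a^*(n)) = \varphi_1(a^*(n-1)) = -a(n-1)$, so $\theta^2 = -\Id$ on generators and $\theta$ is an automorphism of $\mathcal{A}_1$ of order $4$. A direct check shows $\theta$ preserves the normal triangular decomposition: for $n \geq 0$ one has $\theta(a(n)) = a^*(n+1) \in \mathcal{L}^+$ and $\theta(a^*(n+1)) = -a(n) \in \mathcal{L}^+$, and these images span $\mathcal{L}^+$, so $\theta(\mathcal{L}^+) = \mathcal{L}^+$; similarly $\theta(\mathcal{L}^-) = \mathcal{L}^-$, while $\theta(K) = K$. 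Hence $\theta$ preserves the one-dimensional $(\mathcal{L}^0 \oplus \mathcal{L}^+)$-module structure on $\mathbb{C}\vac$ and induces a linear automorphism $F$ of $M = \mathcal{U}(\mathcal{L}) \otimes_{\mathcal{U}(\mathcal{L}^0 \oplus \mathcal{L}^+)} \mathbb{C}\vac$ with $F(\vac) = \vac$ and $F(x\cdot v) = \theta(x)\cdot F(v)$. Applying this to a PBW monomial, with $\theta(a(-m-1)) = a^*(-m)$ and $\theta(a^*(-n)) = -a(-n-1)$, and reordering the (commuting) factors, gives exactly the displayed monomial formula and (\ref{iso-structure}).

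Next I would show $F$ is a vertex algebra isomorphism. Since $a(z) = Y(a(-1)\vac,z)$ and $a^*(z) = Y(a^*(0)\vac,z)$ strongly generate $M$, by the reconstruction theorem (cf.\ Theorem 5.7.1 of \cite{LL}) it suffices to check that $F$ conjugates the generating fields to the fields attached to the images of the generators. From $F\circ a(n) = a^*(n+1)\circ F$ one gets $F\,a(z)\,F^{-1} = \sum_n a^*(n+1)z^{-n-1} = a^*(z) = Y(F(a(-1)\vac),z)$, and from $F\circ a^*(n) = -a(n-1)\circ F$ one gets $F\,a^*(z)\,F^{-1} = -a(z) = Y(F(a^*(0)\vac),z)$; hence $F$ is an automorphism of the underlying vertex algebra $M$. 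Then applying $F$ to $\omega_\mu = (1-\mu)a(-1)a^*(-1)\vac - \mu a(-2)a^*(0)\vac$, using $F(a(-1)a^*(-1)\vac) = -a(-2)a^*(0)\vac$ and $F(a(-2)a^*(0)\vac) = -a(-1)a^*(-1)\vac$, yields $F(\omega_\mu) = \mu\, a(-1)a^*(-1)\vac - (1-\mu)a(-2)a^*(0)\vac = \omega_{1-\mu}$. Because $F$ is a vertex algebra map it intertwines the weight operators, $F L^\mu(0) F^{-1} = F(\omega_\mu)_1 F^{-1} = (\omega_{1-\mu})_1 = L^{1-\mu}(0)$, so it carries the $L^\mu(0)$-eigenspace of weight $\lambda$ onto the $L^{1-\mu}(0)$-eigenspace of weight $\lambda$; thus $F$ is an isomorphism of $\mathbb{C}$-graded conformal vertex algebras $(_\mu M,\omega_\mu) \to (_{1-\mu}M,\omega_{1-\mu})$.

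Finally, for the ``only isomorphism'' and central-charge claims I would use that the central charge is an isomorphism invariant (it is read off from $\omega_3\omega = \tfrac{c}{2}\vac$, preserved since an isomorphism fixes $\vac$ and maps $\omega\mapsto\omega$). Thus $(_\mu M,\omega_\mu) \cong (_\nu M,\omega_\nu)$ forces $c_\mu = c_\nu$, i.e.\ $\mu^2 - \mu = \nu^2 - \nu$, i.e.\ $(\mu-\nu)(\mu+\nu-1) = 0$, so for $\mu \neq \nu$ necessarily $\nu = 1-\mu$; combined with the construction of $F$ this gives $(_\mu M,\omega_\mu)\cong(_\nu M,\omega_\nu) \iff c_\mu = c_\nu \iff \nu \in \{\mu,1-\mu\}$, which is the stated fact that $c_\mu$ determines $(_\mu M,\omega_\mu)$ up to isomorphism. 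For the map itself, given any isomorphism $G\colon(_\mu M,\omega_\mu)\to(_{1-\mu}M,\omega_{1-\mu})$ with $\mu\neq 1-\mu$, comparing the $L(0)$-weights of the strong generators ($a(-1)\vac$ has weight $1-\mu$, $a^*(0)\vac$ has weight $\mu$) and using that the corresponding graded pieces of $_{1-\mu}M$ are one-dimensional for the $\mu$ in question forces $G\,a(z)\,G^{-1} = \alpha\,a^*(z)$ and $G\,a^*(z)\,G^{-1} = \beta\,a(z)$; the relation $[a(m),a^*(n)] = \delta_{m+n,0}$ forces $\alpha\beta = -1$, $G(\omega_\mu) = \omega_{1-\mu}$ is then automatic, so $G$ equals $F$ up to the scaling automorphism $a(n)\mapsto\alpha a(n)$, $a^*(n)\mapsto\alpha^{-1}a^*(n)$ of $M$. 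The main obstacle is precisely this last step: identifying the low-weight graded pieces of $_{1-\mu}M$ so that $G$ is forced into this shape, and being careful about exceptional values of $\mu$ and about the exact sense of ``unique''; the remaining verifications ($\theta$ preserving the triangular decomposition, $F$ being a vertex algebra map, $F(\omega_\mu) = \omega_{1-\mu}$) are short and mechanical.
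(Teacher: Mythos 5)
Your proposal is correct and follows essentially the same route as the paper: verify $F$ on the two strong generators $a(-1)\vac$ and $a^*(0)\vac$, invoke the generation result from \cite{LL} to conclude $F$ is a vertex algebra homomorphism (the paper cites Proposition 5.7.9 where you use the reconstruction theorem), compute $F(\omega_\mu)=\omega_{1-\mu}$ term by term, and derive the uniqueness claim from $c_\mu=c_\nu \iff \nu\in\{\mu,1-\mu\}$. Your additional material---the lift of $\theta$ through the triangular decomposition and the attempted rigidity of an arbitrary isomorphism $G$---goes beyond what the paper's proof actually establishes (the paper only pins down which pairs $(\mu,\nu)$ can be isomorphic, via the central charge), and the incompleteness you flag there is not a gap relative to the paper's own argument.
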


\begin{proof} 
 By definition, $F = \varphi_1 \circ \rho_1$ is a vector space isomorphism.  Eqn.\ (\ref{iso-structure}) implies $F$ is a vertex algebra homomorphism, as follows: By the definition of $F$, we have $F(u_nv) = F(u_n v_{-1} {\bf 1}) = F(u)_nF(v)_{-1} {\bf 1} = F(u)_n F(v)$ for $u, v \in \{a(-1){\bf 1}, a^*(0) {\bf 1} \}$. By induction on $k$ we have that $F(u_nv) = F(u)_nF(v)$ for $v = u_{n_1}^1 \cdots u_{n_k}^k {\bf 1}$ for $u^1, \dots, u^k \in \{a(-1){\bf 1}, a^*(0) {\bf 1} \}$ and $n_1, \dots, n_k \in \mathbb{Z}$.

Then note that 
\begin{eqnarray*}
F(Y(a(-1) {\bf 1}, z)v) &=& F\left( \sum_{n \in \mathbb{Z}} a(n)v z^{-n-1}\right) \ = \ \sum_{n\in \mathbb{Z}} F(a(n)v) z^{-n-1} \\
&=& \sum_{n \in \mathbb{Z}} F((a(-1){\bf 1})_n v) z^{-n-1} \ = \ \sum_{n \in \mathbb{Z}} [F(a(-1){\bf 1})]_n F(v) z^{-n-1} \\
&=& \sum_{n \in \mathbb{Z}} (a^*(0) {\bf 1})_n F(v) z^{-n-1} \ = \ \sum_{j \in \mathbb{Z}} a^*(j) z^{-j} F(v) \\
&=& Y(a^*(0){\bf 1},z) F(v) \ = \ Y(F(a(-1) {\bf 1}), z ) F(v) ,
\end{eqnarray*}
and 
\begin{eqnarray*}
F(Y(a^*(0) {\bf 1}, z)v) &=& F\left( \sum_{n \in \mathbb{Z}} a^*(n)v z^{-n}\right) \ = \ \sum_{n\in \mathbb{Z}} F(a^*(n)v) z^{-n} \\
&=& \sum_{n \in \mathbb{Z}} F((a^*(0){\bf 1})_{n-1} v) z^{-n} \ = \sum_{n \in \mathbb{Z}} [F(a^*(0) {\bf 1})]_{n-1} F(v) z^{-n} \\
&=&  - \sum_{n \in \mathbb{Z}}(a(-1){\bf 1})_{n-1} F(v) z^{-n} \ = \ - \sum_{ j \in \mathbb{Z}} (a(-1) {\bf 1})_j F(v) z^{-j-1}\\
&=& - Y(a(-1){\bf 1},z) F(v) \ = \ Y(F(a^*(0) {\bf 1}), z ) F(v) .
\end{eqnarray*}
Therefore by Proposition 5.7.9 in \cite{LL}, we have $F(Y(u,z)v) = Y(F(u),z)F(v)$ for all $u,v \in M$, and $F$ is a homomorphism of vertex algebras.  Since it is a bijection, it is an isomorphism of vertex algebras.

 Finally, 
for $\mu \in \mathbb{C}$
\begin{eqnarray*}
\varphi_1 \circ \rho_1(\omega_\mu) &=&
\varphi_1 \circ \rho_1( (1-\mu) a(-1) a^*(-1) {\bf 1} - \mu a(-2) a^*(0) {\bf 1}) \\
&=& \varphi_1( (1-\mu) a(0) a^*(-2) {\bf 1} - \mu a(-1) a^*(-1) {\bf 1}) \\
&=& - (1-\mu) a^*(0) a(-2) {\bf 1} + \mu a^*(-1) a(-1) {\bf 1} \\
&=& \mu a(-1) a^*(-1) {\bf 1} - (1-\mu) a^*(0) a(-2) {\bf 1}\\
&=& (1 - (1-\mu))a(-1) a^*(-1) {\bf 1} - (1-\mu) a(-2) a^*(0)  {\bf 1}\\
&=& \omega_{1 - \mu},
\end{eqnarray*}
proving that this is a isomorphism of $\mathbb{C}$-graded conformal vertex algebras.  Then since  $c_\mu = 2(6\mu(\mu - 1) + 1) = 2(6 \nu (\nu - 1) + 1) = c_\nu$ for $\mu \neq \nu$ implies $\nu = 1-\mu$, this shows these are the only isomorphisms  between the conformal structures on $M$. 
\end{proof}

Let $$L^{\mu}(z)=Y(\omega_{\mu},z)=\sum_{n\in\mathbb{Z}}L^{\mu}(n)z^{-n-2}.$$ 
For $\mu=0$, we set  $\omega:=\omega_0$, $L(n):=L^{0}(n)$, and then $c_{0}=2$.  More generally, we have that for $\mu \in \mathbb{C}$,  
\begin{equation} \label{eq13old}
\omega_{\mu}=\omega-\mu \beta(-2){\bf 1}.
\end{equation}
Furthermore, since $(\beta(-2){\bf 1})_0=(D\beta)_0=0$,  and $(\beta(-2){\bf 1})_1=(D\beta)_1=-\beta(0)$ where $D$ is the endomorphism described in Remark \ref{D} we thus have that 
\begin{eqnarray*}
L^{\mu}(-1) &=& L(-1) \qquad \text{ for all }\mu\in\mathbb{C}, \\ 
L^{\mu}(0) &=& L(0) +  \mu \beta(0). 
\end{eqnarray*}
In addition, for all $m,n \in \mathbb{Z}$
\begin{eqnarray*}
\left[L(m),a(n) \right] & = & -na(m+n),\\ 
\left[ L(m),a^*(n) \right] & = & -(m+n)a^*(m+n).
\end{eqnarray*}
In particular, we have 
\begin{eqnarray*}
[L(0),a(n)] &=&-na(n),\\
\left[L(0),a^*(n)\right] &=&-na^*(n),\\
\left[  L^{\mu}(0)  ,a(n) \right]&=& [L(0)  +  \mu\beta(0),a(n)] \ = \ -na(n)  -  \mu a(n) \ =\ (-n  -  \mu)a(n)\\
\left[ L^{\mu}(0)  ,a^*(n) \right] &=&[L(0)  +  \mu\beta(0), a^*(n)] \ = \ -na^*(n) +  \mu a^*(n) \ = \ (-n  +  \mu)a^*(n).
\end{eqnarray*}

Notice that for integers $m_1\geq \cdots \geq m_k\geq 0$, $n_1\geq \cdots \geq n_t\geq 0$,  and $k,t \in \mathbb{Z}_+$, we have 
\begin{eqnarray*}
L^{\mu}(0)a(-m_1-1)\cdots a(-m_k-1){\bf 1}&=&((m_1+ \cdots +m_k+k)-k\mu)a(-m_1-1) \cdots a(-m_k-1){\bf 1}\\
&=&((m_1+ \cdots +m_k)+k(1-\mu))a(-m_1-1) \cdots a(-m_k-1){\bf 1}, \\ 
L^{\mu}(0)a^*(-n_1)\cdots a^*(-n_t){\bf 1}& =&  ((n_1+\cdots +n_t)+t\mu)a^*(-n_1)\cdots a^*(-n_t){\bf 1},
\end{eqnarray*}
and 
\begin{eqnarray}\label{L(0)-grading}
\lefteqn{L^{\mu}(0)a(-m_1-1) \cdots a(-m_k-1)a^*(-n_1) \cdots a^*(-n_t){\bf 1}}\\
&=&  \Big( \sum_{i=1}^km_i+\sum_{j=1}^tn_j+k(1-\mu)+t\mu \Big)  a(-m_1-1) \cdots a(-m_k-1)a^*(-n_1)  \cdots a^*(-n_t){\bf 1} .\nonumber
\end{eqnarray}

 Thus from Eqn.\ (\ref{L(0)-grading}), an element $v = a(-m_1-1) \cdots a(-m_k-1)a^*(-n_1) \cdots a^*(-n_t){\bf 1} \in \, _\mu M$ has an $L^{\mu}(0)$-grading of the form 
\begin{equation}\label{grading}
\mathrm{wt} \, v = r + s + k(1 - \mu) + t\mu  \qquad \mbox{ for $r,s,k, t \in \mathbb{N}$, with $k,t\in \mathbb{Z}_+$ if $r,s \in \mathbb{Z}_+$, respectively.} 
\end{equation} 
That is the action of $L^{\mu}(0)$ on $_\mu M$ defines a $\mathbb{C}$-grading on $_\mu M$ which gives $_\mu M$ the structure of a $\mathbb{C}$-graded vertex algebra.  It is also a $\mathbb{C}$-graded conformal vertex algebra  with strong generators $a(-1)\vac$, $a^*(0)\vac$ which satisfy
\begin{align} \label{deggen1}
&|a(-1)\vac|=1-\mu \\
&|a^*(0)\vac|=\mu\label{deggen2}
\end{align}
However, for only certain values of $\mu$ is $_\mu M$  an $\Omega$-generated  $\CR$-graded vertex operator algebra in the sense of Definition \ref{define-C-graded-VOA}.

We are interested in these values of $\mu$ which give $_\mu M$  an $\Omega$-generated $ \CR$-graded vertex operator algebra structure and the nature of the representations of these vertex algebras $_\mu M$. 

To that end we note that Eqn.\ (\ref{grading}) implies that 
\begin{equation}\label{grading2}
\mathrm{wt} \, v = r + s + k + \mathrm{Re}(\mu)(t-k) + i \mathrm{Im}(\mu)(t-k) 
\end{equation}
for $r,s,k, t \in \mathbb{N}$ with $k,t\in \mathbb{Z}_+$ if $r,s \in \mathbb{Z}_+$, respectively. 

The analysis of the structure of $_\mu M$ naturally falls into the following 5 cases:

{\bf Case 1:}   $\mu=0, 1$, i.e., $c = 2$.  Then with respect to $L^\mu(0)$ weight grading, we the  have $(_0 M=\bigoplus_{n=0}^{\infty} \, _0 M_n,\omega_0)  = (_0 M,\omega)$ is an $\mathbb{N}$-graded conformal vertex algebra with central charge 2. Moreover,  the space of vectors of $L^\mu(0) = L(0)$-weight zero is equal to $\Omega (_0 M)$ and is given by
\[_0 M_0 = \mathrm{Span} \{a^*(0)\cdots a^*(0){\bf 1} = a^*(0)^t {\bf 1} \, | \, t \in \mathbb{N}\} 
\]
which is an infinite-dimensional subspace of $_0 M$.  
Analogously 
\[ \Omega( _1 M) =\  _1 M_0 = \mathrm{Span} \{a(-1)\cdots a(-1){\bf 1} = a(-1)^k {\bf 1} \, | \, k \in \mathbb{N} \} .\] 

$_0 M$ is not a vertex operator algebra (or for that matter,  an $\Omega$-generated $\mathbb{C}$-graded vertex operator algebra) since it has infinite-dimensional weight spaces. 

$_0 M$ is  an $\Omega$-generated $ \CR$-graded vertex algebra, and the $L(0)$-weight spaces of $_0 M$ are also the degree spaces of $_0 M$ viewed either as  an $\Omega$-generated $ \CR$-graded vertex algebra, or as a $\mathbb{C}_{Re>0}$-graded module over itself. 

$_0 M$ is referred to as the  Weyl vertex algebra with central charge 2 and is the unique rank 1 Weyl conformal vertex algebra with central charge 2 up to isomorphism  by Lemma \ref{Weyl-iso-lemma}.

{\bf  Case 2:}  $\mu \in \mathbb{R}$ and $0<\mu<1$,  i.e., $c \in \mathbb{R}$, and   $-1<c<2$.   In this case we have $0<\mu=Re(\mu)<1$ and $0< Re(1-\mu)<1$, and so from Eqn.\ (\ref{grading}) we have that $_\mu M$  (or equivalently $_{1-\mu} M$) is an $\mathbb{R}$-graded conformal vertex algebra, and is  $\Omega$-generated with  $\Omega(_\mu M) = \, _\mu M_0 =\mathbb{C}{\bf 1}$, and 
 
\[_\mu M = \bigoplus_{\substack{\lambda \in \mathbb{R} \\ \lambda\geq 0}} \, _\mu M_\lambda = \,  _\mu M(0)   \bigoplus_{\substack{\lambda \in \mathbb{R} \\ \lambda> 0}} \, _\mu M(\lambda); \]
where the degree spaces and weight spaces coincide.  Furthermore $_\mu M_\lambda = \, _\mu M(\lambda) = 0$ unless $\lambda \in (\mathbb{N} + \mu \mathbb{Z})\cap \mathbb{R}_+$, and in fact 
\begin{eqnarray*}
Spec_{_\mu M} L^{\mu} (0) &=& \{ r + s + k + \mu(t-k) \; | \; r,s, k, t \in \mathbb{N}, \mbox{and $k,t \in \mathbb{Z}_+$ if $r,s \in \mathbb{Z}_+$, resp.} \} \\
&=& \mu \mathbb{N} + (1-\mu)\mathbb{N}.
\end{eqnarray*}

In this case, we have $\dim \, _\mu M_\lambda < \infty$, since $_\mu M$ is  $\Omega$-generated by the finite dimensional set $\Omega(_\mu M) = \, _\mu M_0 =\mathbb{C}{\bf 1}$, and the generating set $S = \{ a(-1) {\bf 1}, a^*(0) {\bf 1}\}$ of positive non-integral weights $\mu$ and $1-\mu$, respectively, between 0 and 1.

Finally, noting that  $Re(\lambda) \geq 0 = |Im(\lambda)|$ for all $\lambda \in Spec_{\mu M} L^\mu(0)$, we conclude that $_\mu M$ is  an $\Omega$-generated $ \CR$-graded vertex operator algebra. 

 {\bf Case 3:}  $\mu \in \mathbb{C}$, $Im(\mu)\neq 0$ and $Re(\mu) = 0$ or $1$.  Since $_\mu M \cong _{1-\mu} M$, without loss of generality we may assume $Re(\mu) =0$.   Setting $\mu=iq$, then $1-\mu= 1 - iq$, from Eqn.\ (\ref{grading}), we have that $_\mu M_0 = \mathbb{C}{\bf 1}$, and with respect to the weight grading given by $L^\mu(0)$ and denoted $|u|$, we have that $Re(|u|)>0$ unless $Re(|u|) = 0$, in which case $u \in \mathrm{Span} \{a^*(0)\cdots a^*(0){\bf 1} = a^*(0)^t {\bf 1} \, | \, t \in \mathbb{N} \}$.  However, in this case $|a^*(0)^t {\bf 1}| = tiq$.  Thus $_\mu M_0 = \mathbb{C} {\bf 1} = \Omega (_\mu M)$ and the degree grading and $L^\mu(0)$-weight grading coincide.  Furthermore $\dim ( _\mu M_\lambda) <\infty$ for each $\lambda \in \mathbb{C}$.  
 
 Thus in this case, $_\mu M$ is  an $\Omega$-generated  $\CR$-graded vertex algebra and therefore a $\mathbb{C}_{Re>0}$-graded module over itself. However as a module over itself, $_\mu M$ has an infinite number of $\lambda \in Spec_{_\mu M}L^\mu(0)$ with $Re(\lambda)=0$ as we now show below. 
 
 Since the weight spaces of $_\mu M$ in this case are finite dimensional, one might think it is a candidate for  an $\Omega$-generated $ \CR$-graded vertex operator algebra.  However,  the question is: Does it satisfy $Re(\lambda) \geq |Im(\lambda)|$ for all  but fintely many $\lambda \in Spec_{_\mu M} L^\mu(0)$? And here the answer is no, since  $|a^*(0) {\bf 1} | = tiq  = \lambda$ implies $Re(\lambda) = 0 < t|q| = |Im(\lambda)|$ for $t \neq 0$.  Thus $_\mu M$ for $\mu \in i \mathbb{R}$ is an example of  an $\Omega$-generated $\mathbb{C}$-graded conformal vertex algebra that is not  an $\Omega$-generated $\mathbb{C}$-graded vertex operator algebra even though $\dim (_\mu M_\lambda) < \infty$. 
 
Analogous results hold for $\mu \in \mathbb{C}$ with $Re(\mu) = 1$ by Lemma \ref{Weyl-iso-lemma}.

{\bf Case 4:}   $\mu \in \mathbb{C}$, $Im(\mu)\neq 0$ and $0<Re(\mu)<1$. Setting $\mu=p+iq$, then $1-\mu=(1-p)+i(-q)$ and  $Re(1 - \mu) = 1-p$ satisfies $0< Re(1-\mu) < 1$.  Thus from Eqn.\ (\ref{grading}), we have that $_\mu M_0 = \mathbb{C}{\bf 1}$, and with respect to the weight grading given by $L^\mu(0)$ and denoted $|u|$, we have that $Re(|u|)>0$ unless $Re(|u|) = 0$, in which case $u \in \mathbb{C}{\bf 1}$.  Thus  
$_\mu M$ is $\mathbb{C}_{Re>0}$-graded with $\Omega (_\mu M)=\mathbb{C}{\bf 1}$.  Therefore  $_\mu M$ is  an $\Omega$-generated $ \CR$-graded vertex algebra and the $L^\mu(0)$-weight spaces correspond with the degree spaces.
 More precisely
\begin{eqnarray*}
Spec_{_\mu M} L^{\mu} (0) &=& \{ r + s + k + p(t-k) + iq(t-k) \; | \; r,s, k, t \in \mathbb{N}, \mbox{and $k,t \in \mathbb{Z}_+$ if $r,s \in \mathbb{Z}_+$, resp.} \} \\
&\subset& p \mathbb{N} + (1- p)\mathbb{N} + iq\mathbb{Z}.
\end{eqnarray*}

In this case, we also have that  $\dim V_\lambda <\infty$ for all $\lambda \in Spec_{_\mu M} L^\mu (0)$.  To see this,  we observe that if we consider only the real part of the weight grading for $_\mu M$, then the grading is the same as that for Case 2.  That is for $v \in M$ with $L^\mu (0) v = \lambda v$, then $L^{Re(\mu)} (0) v = Re(\lambda) v$.  Thus $\dim _\mu M_\lambda \leq \dim \  _{Re(\mu)} M_{Re(\lambda)} <\infty$.  

To analyze when $_\mu M$ is also  an $\Omega$-generated $ \CR$-graded vertex operator algebra, we need to determine if  $Re(\lambda) \geq |Im(\lambda)|$ for all but finitely many weights $\lambda \in Spec_{_\mu M} L^\mu (0)$.

Here we split into two subcases:

{\bf Case 4(a):}  If either $0< Re(\mu) \leq 1/2$ and $|Im(\mu)| \leq Re(\mu)$, or $0 < Re(1-\mu) \leq 1/2$ and $|Im(\mu)| \leq Re(1-\mu)$ hold, then we claim that $_\mu M$ is  an $\Omega$-generated $ \CR$-graded vertex operator algebra. We first prove this for $0< Re(\mu) \leq 1/2$ and $|Im(\mu)| \leq Re(\mu)$ and then note that since $_\mu M \cong \,  _{1-\mu} M$, the result will hold for $0 < Re(1-\mu) \leq 1/2$ and $|Im(\mu)| \leq Re(1-\mu)$.  

So assume $0< Re(\mu) \leq 1/2$ and $|Im(\mu)| \leq Re(\mu)$, i.e. writing $\mu = p + iq$, we have $0< p \leq 1/2$ and $|q|<p$.   Then for any $\lambda = r+s + k + p(t-k) + iq(t-k)  \in Spec_{_\mu M} L^\mu (0)$, we have that $|Im(\lambda)| = |q(t-k)|\leq |p(t-k)| \leq r+ s + k + |p(t-k)|$, whereas $|Re(\lambda)| = |r+s+ k + p(t-k)|$.  Thus if $t-k\geq 0$, we have $|Im(\lambda)| \leq |Re(\lambda)|$.  If $t-k<0$ then $k\neq 0$ and $Re(\lambda) \geq k + p(t-k)= k(1-p) + pt$. But  since $(1-p)\geq p$, we have $Re(\lambda) \geq kp + pt = p(k+t)$ and thus $|Im(\mu)| = |q(t-k)| \leq |p(t-k)| \leq |p(t+k)| \leq |Re(\lambda)|$. Therefore $Re(\lambda) \geq |Im(\lambda)|$ for all weights $\lambda \in Spec_{_\mu M} L^\mu (0)$.  

Therefore we have that in this case $_\mu M$ is  an $\Omega$-generated $ \CR$-graded vertex operator algebra.

{\bf Case 4(b):}  If $0<Re(\mu) \leq 1/2$ and $|Im(\mu)|> Re(\mu)$, or if $0< Re(1-\mu) <1/2$ and $|Im(\mu)|>Re(1-\mu)$, then we claim that $_\mu M$ is not  an $\Omega$-generated $ \CR$-graded vertex operator algebra.  To see this, we first prove the result for $0<Re(\mu) \leq 1/2$, but $|Im(\mu)|> Re(\mu)$, and then note that since $_\mu M \cong \, _{1-\mu}M$, the resut will hold for $0< Re(1-\mu) <1/2$ and $|Im(\mu)|>Re(1-\mu)$.

So assume $0<Re(\mu) \leq 1/2$ and $|Im(\mu)|> Re(\mu)$.  Then $|a^*(0)^t {\bf 1}| = t\mu = \lambda$ for $t \in \mathbb{N}$.  $|Im(\lambda)| = |tIm(\mu)|>tRe(\mu) = Re(\lambda)$.  Thus for an infinite number of $\lambda \in Spec_{_\mu M} L^\mu(0)$, $Re(\lambda) \geq |Im(\mu)|$ is not satisfied, and thus $_\mu M$ is not  an $\Omega$-generated $ \CR$-graded vertex operator algebra.

 {\bf Case 5:} If $Re(\mu) >1$ or $Re(\mu)<0$ then $_\mu M$ has nonzero weight spaces $_\mu M_\lambda$ with both $Re(\lambda)$ arbitrarily large negative and  arbitrarily large positive.  This can be seen by considering that $|a(-1)^k {\bf 1}| = k(1-\mu)$ and $|a^*(0)^t {\bf 1}| = t\mu$, for $k,t \in \mathbb{N}$.  This then implies that ${\bf 1} \notin \Omega(_\mu M)$ even though ${\bf 1} \in \, _\mu M_0$, since if $Re(\mu)>1$ then $a(-1){\bf 1} = (a(-1))_{-1} {\bf 1} \neq 0$ with $|a(-1){\bf 1}| = 1-\mu$, but $Re(1-\mu) - (-1) -1 = Re(1-\mu) <0$. Or analogously, if $Re(\mu) <0$ then $a^*(0){\bf 1} = (a^*(0))_{-1} {\bf 1} \neq 0$ with $|a^*(0){\bf 1} = \mu$, but $Re(\mu) - (-1) -1 = Re(\mu) <0$.  Thus in this case $_\mu M_0 \not\subseteq \Omega(_\mu M)$. 
 
In fact, in this case, $\Omega( _\mu M) = 0$.  To see this, without loss of generality assume $Re(\mu) >1$.  Then consider $v=a(-m_1-1)\cdots a(-m_k - 1)a^*(-n_1) \cdots a^*(-n_t) {\bf 1} \in _\mu M$. We have that $v \notin \Omega(_\mu M)$ since for $u = a(-1){\bf 1} \in _\mu M_{1-\mu}$, we have $(a(-1){\bf 1})_{-1} v = a(-1)v \neq 0$ even though $-1 \neq 1-\mu -1 = - \mu$ and $-1 > - Re(\mu) = Re(1-\mu) - 1$.  Extending by linearity, it follows that  $\Omega(_\mu M) = 0$.

Therefore, in this case $_\mu M$ is not  an $\Omega$-generated $\mathbb{C}$-graded vertex algebra.  Thus, in particular, it is also not  an $\Omega$-generated $\CR$-graded vertex operator algebra.  

In summary, we have that,  independently of its conformal structure, the Weyl vertex algebra is always strongly finitely generated (see Remark \ref{bgrem} (2)). However, as shown above, this vertex algebra is  not necessarily $\Omega$-generated (as this does depend on the conformal structure). Thus, we have the following Theorem:
 
\begin{thm}\label{mu-theorem} 
The $\mathbb{C}$-graded Weyl vertex algebra with grading given by the conformal element $\omega_\mu$ for $\mu \in \mathbb{C}$, denoted
$_\mu M = (M, \omega_\mu)$, is a  finitely strongly generated $\mathbb{C}$-graded vertex algebra that is also a conformal $\mathbb{C}$-graded vertex algebra.  Furthermore:\\

\noindent
I. $_\mu M$ is  a finitely $\Omega$-generated $\CR$-graded vertex operator algebra, i.e. is in $\Omega VOA(\CR(\mathcal{V}))$, if and only if one of the following hold:

(i) $0 < Re(\mu) \leq 1/2$ and $|Im(\mu)| \leq Re(\mu)$;

or 

(ii) $0< Re(1-\mu)< 1/2$ and $|Im(\mu)| \leq Re(1-\mu)$.\\

In addition, in this case I, we have that $\Omega(_\mu M) = \mathbb{C} {\bf 1}$.\\

\noindent
II. $_\mu M$ is both  a finitely $\Omega$-generated $\CR$-graded vertex algebra and a conformal $\mathbb{C}$-graded vertex algebra, i.e. is in $Conf(\mathbb{C}(\mathcal{V})) \cap  \Omega (\CR(\mathcal{V}))$, if and only if $0\leq Re(\mu) \leq 1$. \\

\noindent  III. If $Re(\mu)>1$ or $Re(\mu)<0$ then $_\mu M$ is a finitely strongly generated $\C$-graded conformal vertex algebra but it is not an  $\Omega$-generated $\C$-graded vertex algebra. \\

 Lastly, outside of the strip given by $0 \leq Re(\mu) \leq 1$, we have $\Omega(_\mu M) = 0$, and inside $\Omega(_0 M)$ and $\Omega(_1 M)$ are infinite, whereas elsewhere inside this strip $\Omega(_\mu M) = \mathbb{C} {\bf 1}$.
\end{thm}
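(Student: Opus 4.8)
The plan is to assemble the five-case analysis of $_\mu M$ carried out immediately above into the three numbered assertions and the concluding remark; the theorem is essentially a summary, so what remains is organizational. First I would record the facts valid for \emph{every} $\mu \in \mathbb{C}$: the set $T = \{a(-1){\bf 1}, a^*(0){\bf 1}\}$ strongly and finitely generates $M$ as a vertex algebra (Remark \ref{bgrem}(2), independently of the conformal structure); the distinguished vector $\omega_\mu$ (Eqn.\ (\ref{conformal})) has weight $2$ and obeys the Virasoro relations with central charge $c_\mu$; and by the weight formula (Eqn.\ (\ref{grading})) the operator $L^\mu(0)$ defines a $\mathbb{C}$-grading on $_\mu M$ for which $\omega_\mu$ is a conformal vector. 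This already gives the opening sentence of the theorem together with the positive half of Part III.

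Next I would establish Part II. For $0 \leq Re(\mu) \leq 1$ we are in Cases 1--4, and for each of these the analysis above exhibits the $L^\mu(0)$-grading as a $\mathbb{C}_{Re>0}$-grading with $\mathbb{C}{\bf 1} \subseteq \Omega(_\mu M)$, shows that the $L^\mu(0)$-eigenspaces coincide with the degree spaces of Definition \ref{degree}, and shows that $T$ generates $_\mu M$ over $\Omega(_\mu M)$; hence $_\mu M$ is a finitely $\Omega$-generated $\CR$-graded vertex algebra, and it is conformal $\mathbb{C}$-graded by the previous paragraph. For $Re(\mu) > 1$ or $Re(\mu) < 0$ we are in Case 5, where the weights of $a(-1)^k{\bf 1}$ and $a^*(0)^t{\bf 1}$, namely $k(1-\mu)$ and $t\mu$, have real parts unbounded above and below; the Case 5 argument then forces $\Omega(_\mu M) = 0$, so $_\mu M$ cannot be $\Omega$-generated. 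This proves Part II and, combined with the first paragraph, Part III.

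For Part I I would unwind Definition \ref{define-C-graded-VOA}: membership in $\Omega VOA(\CR(\mathcal{V}))$ requires, beyond being a finitely $\Omega$-generated $\CR$-graded vertex algebra (so $0 \leq Re(\mu) \leq 1$ by Part II), that all $L^\mu(0)$-eigenspaces be finite dimensional and that $Re(\lambda) \geq |Im(\lambda)|$ for all but finitely many $\lambda \in Spec_{_\mu M} L^\mu(0)$. I would then rule out $\mu \in \{0,1\}$ (infinite-dimensional $_0 M_0$, $_1 M_0$; Case 1), and rule out $Re(\mu) \in \{0,1\}$ with $Im(\mu) \neq 0$ as well as $0 < Re(\mu) \leq 1/2$ with $|Im(\mu)| > Re(\mu)$ and its mirror under $\mu \mapsto 1 - \mu$ (the towers $a^*(0)^t{\bf 1}$, resp.\ $a(-1)^t{\bf 1}$, of weight $t\mu$, resp.\ $t(1-\mu)$, violating $Re \geq |Im|$ for infinitely many $t$; Cases 3 and 4(b)). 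What remains is $\mu \in \mathbb{R}$ with $0 < \mu < 1$ (Case 2) together with Case 4(a), which is exactly the region described by conditions (i) and (ii); for these the analysis above supplies finite dimensionality (via $\dim \, _\mu M_\lambda \leq \dim \, _{Re(\mu)} M_{Re(\lambda)} < \infty$, reducing to Case 2) and the inequality $Re(\lambda) \geq |Im(\lambda)|$, and identifies $\Omega(_\mu M) = \mathbb{C}{\bf 1}$. The closing remark on $\Omega(_\mu M)$ is then read off region by region: $0$ outside the strip (Case 5), infinite dimensional at $\mu = 0,1$ (Case 1), and $\mathbb{C}{\bf 1}$ for every other $\mu$ with $0 \leq Re(\mu) \leq 1$ (Cases 2--4).

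The main obstacle is the Case 4 dichotomy underlying Part I: proving that $Re(\lambda) \geq |Im(\lambda)|$ holds for all but finitely many weights \emph{exactly} when $|Im(\mu)| \leq \min\{Re(\mu), Re(1-\mu)\}$. Writing $\mu = p + iq$ and a generic weight as $\lambda = r + s + k + (p+iq)(t-k)$ with $r,s,k,t \in \mathbb{N}$, the delicate subcase is $t - k < 0$, where one needs $Re(\lambda) = k(1-p) + pt \geq p(k+t) \geq |q|(k+t) \geq |q(t-k)| = |Im(\lambda)|$, which uses $1 - p \geq p$, i.e.\ $p \leq 1/2$; dropping that hypothesis is exactly what produces Case 4(b), where the inequality genuinely fails on the tower $a^*(0)^t{\bf 1}$. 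Beyond this estimate the argument is bookkeeping across the five regions, with care needed at the boundary values $Re(\mu) = 0, 1/2, 1$ to place each in the correct case.
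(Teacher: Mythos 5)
Your proposal matches the paper's own proof, which likewise consists of assembling the five-case analysis of Section 3 into the three numbered assertions (citing Cases 2 and 4(a) for Part I, Cases 1--4 versus Case 5 for Part II, and Case 5 for Part III), and you correctly isolate the same key estimate $Re(\lambda) = k(1-p)+pt \geq p(k+t) \geq |q(t-k)|$ for the subcase $t-k<0$ that the paper uses in Case 4(a). No substantive differences.
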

\begin{proof} The Cases 1--5 above exhaust the possibilities for $\mu \in \mathbb{C}$, and in each case it is shown that $_\mu M$ is both a conformal $\mathbb{C}$-graded vertex algebra and  an $\Omega$-generated $\mathbb{C}$-graded vertex algebra with generating set $\{a(-1){\bf 1}, a^*(0) {\bf 1} \}$.

Cases 2 and 4(a) show that when $0<Re(\mu)\leq 1/2$, and $|Im(\mu)|\leq Re(\mu)$, or when $0< Re(1-\mu) < 1/2$ and $|Im(\mu)| \leq Re(1-\mu)$ then $_\mu M$ is  an $\Omega$-generated  $\CR$-graded vertex operator algebra. 

Cases 1, 3, 4(b), and 5 show that in the remaining cases, $_\mu M$ is not  an $\Omega$-generated   $\CR$-graded vertex operator algebra but is still both a conformal $\mathbb{C}$-graded vertex algebra and  an $\Omega$-generated $\mathbb{C}$-graded vertex algebra.   

Finally, we note that by Case 1-5, we have that if $0\leq Re(\mu) \leq 1$ (Cases 1-4) then the we have that $_\mu M$ is  an $\Omega$-generated $\CR$-graded vertex algebra, but outside of this region (Case 5) it is not.   

The $\Omega(_\mu M)$ are as given in Cases 1-5.
\end{proof}

  \begin{figure}[ht]
    \centering
    \includegraphics[width=9cm]{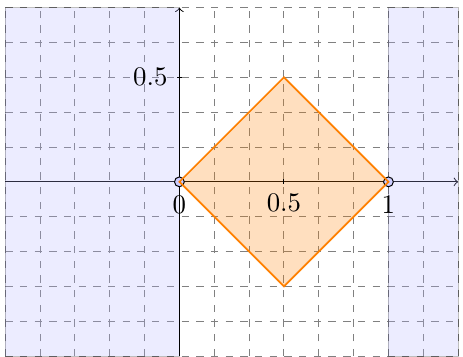}
    \caption{ {\bf Different $\mathbb{C}$-graded vertex algebra structures for $_\mu M$  under conformal flow. For  $\mu \in \mathbb{C}$ satisfying Theorem \ref{mu-theorem}, part I i.e. for $\mu$ inside the diamond shaped region, $_\mu M$  is  an $\Omega$-generated $\CR$-graded vertex operator algebra}.   In the regions where $Re(\mu)> 1$ or $0 < Re(\mu)$, $_\mu M$ is not  an $\Omega$-generated $\CR$-graded vertex algebra  and thus is also not an $\Omega$-generated $\CR$-graded vertex operator algebra. In the remaining regions, $_\mu M$ has the structure of  an $\Omega$-generated $\CR$-graded vertex algebra and a conformal vertex algebra {\it but not} of  an $\Omega$-generated $\CR$-graded vertex operator algebra.}
    \label{fig: valuesformu}
  \end{figure}\hfill

\begin{rmk}
We note that the Weyl vertex algebras in case 3 above, $_\mu M$ with $\mu$ a purely imaginary non zero number, provide a family of examples of conformal $\CR$-graded vertex algebras which are not $\CR$-graded vertex {\it operator} algebras.
\end{rmk}


\begin{rmk} \label{conformalflowrem}
Theorem \ref{mu-theorem} shows a stark contrast between the Weyl vertex algebras (i.e. free bosonic ghosts) under conformal flow in comparison to free bosons under conformal flow.  Recall (cf.  \cite{MN} ) that free bosons $V_{bos} = \mathbb{C}[\alpha(-n) \; | \; n \in \mathbb{Z}_+]$ admit a family of conformal vectors $\omega_\nu = \frac{1}{2} \alpha(-1)^2 {\bf 1} + \nu \alpha(-2) {\bf 1}$,  for $\nu \in \mathbb{C}$  which endow $V_{bos}$ with a vertex operator algebra structure of central charge $1-12\nu^2$.  However under this conformal flow we have that $L^\nu(0)$, and thus the $\mathbb{Z}$-grading and vertex operator algebra structure of $V_{bos}$ does not change.  In fact the vertex operator algebra $(V_{bos}, \omega_\nu)$ differs from $(V_{bos}, \omega_{\nu'})$ only in its representation theory in that some indecomposable nonirreducible modules can have a $L^\nu(0)$-action that is semisimple on a module with a 2   $\times$ 2 Jordan block in the vacuum space $\Omega(W)$ for some values of $\nu$ whereas the action of $L^{\nu'}(0)$ is not semi-simple for some other $\nu'\neq \nu$. But the non semi-simplicity of $V_{bos}$-modules under conformal flow does not change, i.e. $(V_{bos}, \omega_\nu)$ is always irrational.  Whereas by comparison, we will show in Section 5.3 that $(_\mu M, \omega_\mu)$ can be rational or irrational depending on $\mu$.
\end{rmk}

\begin{rmk} We note that the use of the term ``rank'' in the definition of the Weyl vertex algebra in Section 3 alludes to the number of pairs of fields of the form $a(z), a^*(z)$ (or pairs of $\beta(z), \gamma(z)$ fields in the Physics literature) and not the rank of the vertex algebra in the sense of \cite{FLM3} which involves the action of the Virasoro algebra and is instead referred to as the central charge. Therefore, the rank $n$ Weyl vertex algebra, consists of the tensor product of $n$ copies of the rank 1 Weyl vertex algebra. Its conformal structure is determined by the choice of the conformal structures associated to each of the tensor factors. Namely, the rank $n$ Weyl vertex algebra $_{\mu_{1}} M\otimes \dots \otimes  \ _{\mu_{n}}M$ has conformal vector $\omega=\sum_{i=1}^n \omega_{\mu_{i}}$ (as in equation \ref{conformal}) and central charge $c=\sum_{i}^n c_{\mu_{i}}$ (as in equation (\ref{central})).
\end{rmk}

\section{4. Zhu algebras of $\Omega$-generated   $\CR$-graded vertex algebras}

 In this section we present some results from \cite{LM} on the (level zero) Zhu algebras of $\Omega$-generated $\CR$-graded vertex  algebras, and the correspondence between simple modules for the Zhu algebra and simple $\CR$-graded modules for the vertex algebra.  We remind the reader that in \cite{LM}, our notion of $\Omega$-generated $\CR$-graded vertex  algebra was called a $\mathbb{C}$-graded vertex algebra.

Let $V=(V,Y,{\bf 1})$ be  an $\Omega$-generated  $\CR$-graded vertex algebra with grading $V=\bigoplus_{\lambda\in\mathbb{C}}V_{\lambda}$. For $u\in V_{\lambda}$, let $\overline{
|u|}$ denote the ceiling of the real part of $\lambda$, i.e., \[\overline{|u|}:=\min\{n\in\mathbb{Z}~|~n\geq Re(\lambda) = Re(|u|)\}.\] Let 
\begin{equation} \label{vr}
 V^r:= \mathrm{span}_\mathbb{C} \{v\in V~|~  \mbox{$v$  is of homogeneous weight and } r=|v|-\overline{|v|}\}.
\end{equation} 
Then, we have that  $$V=\bigoplus_{r\in \mathbb{C}}V^r.$$ 

\begin{rmk}\label{zgradingv0}
We observe that $u\in V^0$ if and only if $|u|=\overline{|u|}$. Equivalently, if and only if $Lu=\overline{|u|}u$  where $L$ is the operator defined in Eqn.\ (\ref{Ldef}). We can then characterize $V^0$ as the vertex subalgebra of $V$ consisting of all vectors of integer weight.
\end{rmk}

In \cite{Z}  Zhu introduced an associative algebra, $A(V)$ associated to any vertex operator algebra $V$ which can be used to classify its irreducible representations. Laber and Mason in \cite{LM} studied the Zhu algebra associated to certain $\mathbb{C}$-graded vertex algebras by making the necessary modifications to the formulas introduced by Zhu. We will use the $\mathbb{C}$-graded Zhu algebra machinery to show that a particular family of $\Omega$-generated $\CR$-graded vertex algebras are rational. We recall first the appropriate definition of the Zhu algebra in the $\mathbb{C}$-grading setting following\cite{LM}.
\begin{dfn}\cite{LM} Let $V$ be  an $\Omega$-generated  $\CR$-graded vertex algebra.
Let $u\in V^r$  and $v \in V$. Define the products $\circ$ and $*$ on $V$ as the linear extensions of the following:
$$u\circ v:=\Res_z\frac{(1+z)^{\overline{|u|}+\delta_{r,0}-1}}{z^{1+\delta_{r,0}}}Y(u,z)v$$
and
\begin{equation}\label{define-*}
u*v=\delta_{r,0}\Res_z\frac{(1+z)^{|u|}}{z}Y(u,z)v.
\end{equation}
Define $O(V)$ to be the linear span of all elements of the form $u\circ v$ for $u,v\in V$.
\end{dfn}
\begin{prop}\label{Zhu-properties-prop}\cite{LM} Let $V$ be  an $\Omega$-generated  $\CR$-graded vertex algebra and define $V^r$ as in equation (\ref{vr}). Then,

(i) If $r\neq 0$ then $V^r\subseteq O(V)$.

\vspace{0.2cm}

(ii) For $u\in V$ homogeneous, $( D+ L)u\equiv 0\mod O(V),$  where $L$ is the operator defined in Eqn.\ (\ref{Ldef})

\vspace{0.2cm}

(iii) For $u\in V^r$ homogeneous,  $v \in V$, and any $m\geq n\geq 0$, we have 
\[\Res_z\frac{(1+z)^{\overline{|u|}+\delta_{r,0}-1+n}}{z^{1+\delta_{r,0}+m}}  Y(u,z)v \in O(V).\]

\vspace{0.2cm}

(iv) For $u,v\in V$ homogeneous, we have 
\[Y(u,z)v\equiv (1+z)^{-|u|-|v|}Y\left(v,-\frac{z}{1+z}\right)u\mod O(V).\]

\vspace{0.2cm}

(v) For $u,v \in V^0$ homogeneous, we have the identities 
\[u*v \equiv \Res_z\frac{(1+z)^{|v|-1}}{z}Y(v,z)u\mod O(V)\]
   and 
\[u*v-v*u \equiv\Res_z(1+z)^{|u|-1}Y(u,z)v\mod O(V). \]

(vi) $O(V)$ is a 2-sided ideal of $V$ with respect to the $*$ product.

\vspace{0.2cm}

(vii) Define $A(V):=V/O(V)$. Then $A(V)$ is an associative algebra with respect to the $*$ product.

\end{prop}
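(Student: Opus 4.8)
The plan is to adapt Zhu's construction \cite{Z} to the $\CR$-graded setting, the only genuinely new features being the integer ceiling $\overline{|u|}$ appearing in the $\circ$-product and the restriction of the $*$-product to $V^0$ via the factor $\delta_{r,0}$. Throughout, for homogeneous $u\in V^r$ I write $\delta=\delta_{r,0}$ and $M=\overline{|u|}$, and I use $Y(u,z){\bf 1}=e^{zD}u$. For (i): if $r\neq 0$ then $\delta=0$, so $u\circ{\bf 1}=\Res_z\frac{(1+z)^{M-1}}{z}e^{zD}u$ is the $z^0$-coefficient of $(1+z)^{M-1}e^{zD}u$, which equals $u$; hence $u=u\circ{\bf 1}\in O(V)$, and taking linear spans gives $V^r\subseteq O(V)$. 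For (ii): if $u\in V^0$ then $\delta=1$ and $M=|u|$, so $u\circ{\bf 1}=\Res_z\frac{(1+z)^{|u|}}{z^2}e^{zD}u$ is the $z^1$-coefficient of $(1+z)^{|u|}e^{zD}u$, namely $|u|u+Du=(L+D)u\in O(V)$; if $r\neq 0$, then $u$ and $Du$ both lie in $V^r$ (since $D$ raises the weight by exactly $1$, leaving the fractional offset unchanged), so $(D+L)u\in O(V)$ by (i).

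For (iii): expanding $(1+z)^n=\sum_{k=0}^n\binom{n}{k}z^k$ and invoking $m\geq n$ reduces the claim to showing $\Res_z\frac{(1+z)^{M+\delta-1}}{z^{1+\delta+p}}Y(u,z)v\in O(V)$ for every integer $p\geq 0$. The case $p=0$ is $u\circ v\in O(V)$. For $p\geq 1$ I would induct on $p$: using $Y(Du,z)=\frac{d}{dz}Y(u,z)$, the residue identity $\Res_z\!\big(f\,\tfrac{d}{dz}g\big)=-\Res_z\!\big(f'g\big)$, and $\overline{|Du|}=M+1$, the quantity $\Res_z\frac{(1+z)^{M+\delta}}{z^{\delta+p}}Y(Du,z)v$ (which lies in $O(V)$ by the inductive hypothesis applied to $Du\in V^r$) equals a linear combination of $\Res_z\frac{(1+z)^{M+\delta-1}}{z^{1+\delta+(p-1)}}Y(u,z)v$ (in $O(V)$ by induction) and $(\delta+p)\Res_z\frac{(1+z)^{M+\delta-1}}{z^{1+\delta+p}}Y(u,z)v$; since $\delta+p\neq 0$, the desired residue lies in $O(V)$.

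For (iv): by the skew-symmetry identity $Y(u,z)v=e^{zD}Y(v,-z)u$, valid in any vertex algebra, $Y(u,z)v=\sum_{n}(-z)^{-n-1}\sum_{k\geq 0}\frac{z^k}{k!}D^k(v_nu)$. Iterating (ii) gives, for any homogeneous $w$, $D^k w\equiv(-1)^k|w|(|w|+1)\cdots(|w|+k-1)\,w\pmod{O(V)}$ (induct on $k$, applying (ii) to $D^{k-1}w$, homogeneous of weight $|w|+k-1$, and scaling the previous congruence). Applying this to $w=v_nu$, of weight $|u|+|v|-n-1$, the inner sum becomes $(1+z)^{\,n+1-|u|-|v|}v_nu$, and summing over $n$ (a finite sum at each fixed power of $z$, by lower truncation) yields $Y(u,z)v\equiv(1+z)^{-|u|-|v|}Y\!\big(v,-\tfrac{z}{1+z}\big)u\pmod{O(V)}$. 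For (v), with $u,v\in V^0$, using (iv) gives $u*v=\Res_z\frac{(1+z)^{|u|}}{z}Y(u,z)v\equiv\Res_z\frac{(1+z)^{-|v|}}{z}Y(v,-\tfrac{z}{1+z})u$; the formal substitution $z\mapsto -z/(1+z)$, an involution with derivative $-1$ at the origin, converts this into $\Res_w\frac{(1+w)^{|v|-1}}{w}Y(v,w)u$, the first identity. Subtracting $v*u=\Res_z\frac{(1+z)^{|v|}}{z}Y(v,z)u$ from the first identity with $u,v$ interchanged, and using $\frac{(1+z)^{|u|}-(1+z)^{|u|-1}}{z}=(1+z)^{|u|-1}$, gives the second identity.

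For (vi) and (vii): to prove that $O(V)$ is a two-sided ideal for $*$ and that $*$ descends to an associative product on $A(V)=V/O(V)$ with identity $[{\bf 1}]$ (the latter from $Y({\bf 1},z)=1_V$ together with (i)), I would expand $u*(a\circ b)$, $(a\circ b)*u$, and $(u*v)*w-u*(v*w)$ using the iterate (associativity) form of the Jacobi identity for $V$ as a module over itself, collapsing each resulting term into the shape appearing in (iii) once the $\delta_{r,0}$ factors and the passage from $|u|$ to $\overline{|u|}$ have been carefully tracked. This is the step I expect to be the main obstacle: the calculation is the $\CR$-graded analogue of Zhu's, so the substance is bookkeeping — verifying that the ceiling function, the complex exponents $(1+z)^{|u|}$, and the restriction to $V^0$ interact consistently so that every intermediate quantity produced by the Jacobi identity stays inside the linear span of the $\circ$-products.
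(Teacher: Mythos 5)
The paper itself offers no proof of this proposition --- it is imported wholesale from \cite{LM} --- so there is no in-paper argument to measure you against; your proposal can only be judged on its own terms. Parts (i)--(v) are correct and essentially complete, and they follow the standard Zhu-style route: (i) and (ii) via $u\circ\vac$ and the creation property $Y(u,z)\vac=e^{zD}u$ (your computation that the residue picks out $u$, resp.\ $|u|u+Du$, is right, and the observation that $D$ preserves each $V^r$ because $\overline{|Du|}=\overline{|u|}+1$ is exactly what makes the $r\neq 0$ case of (ii) follow from (i)); (iii) by reducing to $n=0$ and inducting on the extra power of $z$ using $Y(Du,z)=\frac{d}{dz}Y(u,z)$ and integration by parts for residues, where the coefficient $\delta+p\neq 0$ is correctly isolated; (iv) from skew-symmetry together with the iterated consequence $D^kw\equiv(-1)^k|w|(|w|+1)\cdots(|w|+k-1)\,w \pmod{O(V)}$ of (ii); and (v) by the formal substitution $z\mapsto -z/(1+z)$, whose Jacobian $-(1+w)^{-2}$ accounts for the drop from $(1+w)^{|v|+1}$ to $(1+w)^{|v|-1}$. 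These all check out.

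The genuine gap is (vi) and (vii), which you explicitly leave as a plan rather than a proof, and which are the substantive assertions of the proposition: without them $A(V)$ is not an algebra and the whole construction collapses. What is missing is the $\CR$-graded analogue of Zhu's ideal and associativity lemmas: one must actually show $u*(a\circ b)\in O(V)$, $(a\circ b)*u\in O(V)$, and $(u*v)*w-u*(v*w)\in O(V)$, and in this setting the $\circ$-product takes two different shapes depending on whether $a\in V^0$ or $a\in V^r$ with $r\neq 0$, so the Jacobi-identity expansion produces terms $a\circ b$ with $a\in V^r$, $b\in V^s$ and $r+s\in\mathbb{Z}$; these land in $V^0$ and are \emph{not} disposed of by part (i), so one must verify term by term that they are absorbed by part (iii). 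There is a useful reduction you do not exploit: since $u*v=0$ unless $u\in V^0$, and since for $u\in V^0$, $v\in V^s$ one has $u_{i-1}v\in V^s$ (integer shifts of the weight do not change the offset $|w|-\overline{|w|}$), hence $u*v\in V^s\subseteq O(V)$ whenever $s\neq 0$, the ideal property and associativity reduce to elements of integer weight plus the cross terms just described. Writing that reduction out, and then running Zhu's residue computation on what remains, is the actual content of (vi)--(vii); calling it ``bookkeeping'' and identifying it as ``the main obstacle'' is an accurate self-diagnosis, but it leaves the proposition unproved.
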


\begin{rmk} \label{v0zhu}
It follows from Proposition \ref{Zhu-properties-prop} $(i)$ and $(vii)$ that $A(V)=V^0/(O(V)\cap V^0)$
\end{rmk}
\vspace{0.2cm}
 \begin{dfn}
For $v\in V$ homogeneous, define the  {\em zero mode $o(v)$ of }$v$ as $o(v)=v_{|\overline{v}|-1}$. We extend this definition to all of $V$ by linearity.  
\end{dfn} 

\begin{rmk} If $v\in V^r$ for some nonzero $r$ then $\overline{|v|}-1> Re(|v|)-1$, and $o(v)=v_{\overline{|v|}-1}$ annihilates any element of $\Omega(V)$. Also, if $v\in V^0$  then $|\overline{v}|=|v|$, and this definition of $o(v)$ reduces to the original zero mode definition given  by Zhu in \cite{Z}.
\end{rmk}
We conclude this section stating the expected correspondence between simple $A(V)$ modules and simple ``admissible'' $V$-modules in the $\mathbb{C}$-graded setting. We note that $\CR$-graded modules are the appropriate ``admissible" representations in this context.
\begin{prop} \label{correspondence}\cite{LM} Let $V$ be  an $\Omega$-generated $ \CR$-graded vertex algebra.\\
(i) Let $W=W(0)\bigoplus_{\lambda \in\mathbb{C}, Re(\lambda)>0}W(\lambda)$  be a simple   $\mathbb{C}_{Re>0}$-graded $V$-module with  $\Omega(W)=W(0)$, as shown in Proposition \ref{strongly-generated-LM-prop}.  Then, $\Omega(W)$ is a simple $A(V)$-module. \\
(ii) There is a one-to-one correspondence between the categories of simple $A(V)$-modules and simple    $\mathbb{C}_{Re>0}$-graded $V$-modules.
\end{prop}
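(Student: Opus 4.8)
The plan is to transplant the classical Zhu-algebra correspondence of \cite{Z, DLM2, Li5} into the $\mathbb{C}$-graded framework of \cite{LM}: the real part of the degree plays the role of the integer grading, the zero mode $o(v) = v_{\overline{|v|}-1}$ replaces Zhu's zero mode, and the hypothesis $\Omega(W) = W(0)$ (which holds for simple modules by Proposition \ref{strongly-generated-LM-prop}) lets $W(0)$ play the role of the lowest-weight space.

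\smallskip
\noindent\textit{Part (i).} First I would put an $A(V)$-module structure on $W(0)$ via $o(\cdot)$. Decomposing $v = \sum_{r} v^r$ along $V = \bigoplus_{r \in \mathbb{C}} V^r$, the mode $o(v^0) = (v^0)_{|v^0|-1}$ is degree-preserving on $W$, whereas for $r \neq 0$ one has $\overline{|v^r|}-1 > Re(|v^r|) - 1$, so $(v^r)_{\overline{|v^r|}-1}$ strictly lowers the real part of the degree and hence kills $W(0) = \Omega(W)$; thus $o(v)$ preserves $W(0)$, agrees there with $o(v^0)$, and $o({\bf 1}) = {\bf 1}_{-1} = \mathrm{Id}$. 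Using the $W$-module analogues of the identities in Proposition \ref{Zhu-properties-prop}(iii),(iv) together with $\Omega(W) = W(0)$, I would then show that $o(u \circ v)$ annihilates $W(0)$, so $O(V)$ acts as zero; and a standard iterate computation from the Jacobi identity gives $o(u * v)|_{W(0)} = o(u)\,o(v)|_{W(0)}$, so that $W(0)$ is an $A(V)$-module. For simplicity, take $0 \neq w \in W(0)$; since $W$ is simple, $\langle w\rangle = W$, so by Proposition \ref{submodule} every element of $W(0)$ is a finite sum of vectors $v^k_{n_k}\cdots v^1_{n_1} w$. Inducting on $k$ and using the degree grading to discard all modes that are not degree-preserving (those lowering or raising the real part cannot contribute to the $W(0)$-component), one gets that the $W(0)$-component of any such vector lies in $\mathrm{span}\{o(a)w \mid a \in V\} = A(V)\,w$. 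Hence $W(0) = A(V)\,w$, proving $\Omega(W) = W(0)$ is simple.

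\smallskip
\noindent\textit{Part (ii).} By (i), $W \mapsto \Omega(W) = W(0)$ is a well-defined assignment from simple $\mathbb{C}_{Re>0}$-graded $V$-modules to simple $A(V)$-modules. For the inverse, given a simple $A(V)$-module $U$ I would run the Frenkel--Zhu / Dong--Li--Mason induction adapted to the $\mathbb{C}$-grading: build the universal $\mathbb{C}_{Re>0}$-graded weak $V$-module $\bar M(U)$ with $\bar M(U)(0) = U$, with $V$ acting on the bottom through $o(\cdot)$ and graded by the real part of the degree, and then form $L(U)$, the quotient of $\bar M(U)$ by its unique maximal graded $V$-submodule meeting $\bar M(U)(0)$ trivially, which is simple. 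One then verifies $\Omega(L(U)) \cong U$; conversely, for $W$ simple the universal property of $\bar M(\Omega(W))$ produces a surjection $\bar M(\Omega(W)) \twoheadrightarrow W$ whose simple graded quotient is $W$, so $L(\Omega(W)) \cong W$. Thus the two assignments are mutually inverse bijections on isomorphism classes, which is the asserted one-to-one correspondence.

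\smallskip
\noindent\textit{Main obstacle.} The hard part will be the construction and well-definedness of $\bar M(U)$ in part (ii): it must be shown to carry a genuine weak $V$-module structure compatible with the $\mathbb{C}$-grading — the $\mathbb{C}$-graded analogue of the Frenkel--Zhu construction — and this is delicate because the weight spaces $V_\lambda$ and the space $\Omega(V)$ need not be finite-dimensional and the ambient order on weights is by $Re(\cdot)$ rather than $\mathbb{Z}$, so the usual finiteness arguments do not apply verbatim. The second delicate point is establishing $\Omega(L(U)) = U$, i.e.\ that the maximal graded submodule removed in forming $L(U)$ meets the bottom $\bar M(U)(0) = U$ in $0$; this requires a non-degeneracy/pairing argument on $\bar M(U)$ guaranteeing that no part of $U$ is collapsed.
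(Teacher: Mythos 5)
The paper does not actually prove this proposition --- it is quoted verbatim from Laber--Mason \cite{LM} with no argument given --- so there is no in-paper proof to compare against. Your outline is the standard Zhu/Dong--Li--Mason correspondence transported to the $\CR$-graded setting, which is exactly what the cited reference carries out: part (i) via the zero-mode action $o(\cdot)$ on $W(0)$ (where, as you note, $o(v^r)$ for $r\neq 0$ kills $W(0)$ because $W(\lambda)=0$ whenever $\lambda\neq 0$ and $Re(\lambda)\leq 0$) combined with the single-mode spanning statement of Proposition \ref{submodule}; and part (ii) via the induced module $\bar M(U)$ and its simple quotient $L(U)$. The two delicate points you flag --- the well-definedness of the weak module structure on $\bar M(U)$ and the verification that $\Omega(L(U))\cong U$ --- are indeed where the substance lies, and they constitute the content of \cite{LM} rather than of this paper; your sketch is consistent with that treatment.
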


\section{5. Rationality for certain $\CR$-graded vertex  operator algebras and applications}\label{sec3} \
 In this section we prove our main result on the rationality of finitely  $\Omega$-generated  $\CR$-graded vertex  operator algebras that are not $\mathbb{Z}$-graded and whose simple $\CR$-graded modules are all ordinary. We then apply this result to the Weyl vertex algebras with the central charges $c_\mu$ (or equivalently the conformal element $\omega_\mu$) that give $(_\mu M, \omega_\mu)$ the structure of a $\CR$-graded vertex operator algebra.

 The following Theorem is analogous to Theorem 3.3 in \cite{DN}  where a $g$-rationality for $g$-twisted modules of a vertex operator algebra $V$ and for an automorphism $g$  was studied.  Here we use the idea of their proof applied to the setting of   $\Omega$-generated $\CR$-graded vertex operator algebras. 
 
\begin{thm}\label{rationalcondition} 
Let $V$ be  an $\Omega$-generated $ \CR$-graded vertex operator algebra satisfying  the following conditions:

\begin{enumerate} 
\item Every simple $\CR$-graded $V$-module is an ordinary module.
\item $A(V)$ is a finite-dimensional semisimple associative algebra.
\item $\omega+O(V)$ acts via its zero mode $L(0)$ on all irreducible $A(V)$-modules as the same constant eigenvalue $\lambda$---that is, there is a fixed $\lambda \in \mathbb{C}$ such that for any $A(V)$-module $U$, then  $U$ consists of generalized eigenvectors for $o(\omega) = L(0)$ with eigenvalue $\lambda$.
\end{enumerate}

Then $V$ is rational, i.e., every $\mathbb{C}_{Re>0}$-graded $V$-module is completely reducible.
\end{thm}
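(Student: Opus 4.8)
The plan is to follow the strategy of Dong--Nagatomo \cite{DN} (Theorem 3.3), adapted to the $\CR$-graded setting, and to reduce the rationality statement to the semisimplicity of $A(V)$ via the functor $W \mapsto \Omega(W)$ together with an induction functor in the reverse direction. Concretely: given an arbitrary $\mathbb{C}_{Re>0}$-graded $V$-module $W = W(0) \bigoplus_{Re(\tau)>0} W(\tau)$, I want to show $W$ decomposes as a direct sum of simple $\CR$-graded submodules. The key point is that by hypothesis (1) every simple $\CR$-graded $V$-module is ordinary, so $L(0)$ acts on each simple module as a locally finite operator whose generalized eigenspaces are finite-dimensional, and by (3) all the ``bottoms'' of the simple modules that can occur (i.e. the simple $A(V)$-modules, which by Proposition \ref{correspondence} are exactly the $\Omega(W')$ for $W'$ simple) sit at $L(0)$-eigenvalue $\lambda$. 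This uniform bottom weight is what makes the induction work cleanly.

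The steps, in order, would be: First I would pass to $\Omega(W)$ and observe that it is a module for $A(V)$ (this is the module-side analogue of Proposition \ref{correspondence}(i), extended from simple $W$ to arbitrary $W$ by the same computation — the $*$-action of $A(V)$ on $\Omega(W)$ via zero modes is well defined because $O(V)$ acts as zero on $\Omega(W)$, exactly as in the simple case). Second, since $A(V)$ is finite-dimensional and semisimple by hypothesis (2), every $A(V)$-module is completely reducible, so $\Omega(W) = \bigoplus_i U_i$ with each $U_i$ a simple $A(V)$-module; by Proposition \ref{correspondence} each $U_i$ is $\Omega(W_i)$ for a unique simple $\CR$-graded $V$-module $W_i$. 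Third, I would build a $V$-module map $\Phi \colon \bigoplus_i W_i \to W$ extending the inclusion $\bigoplus_i U_i = \Omega(W) \hookrightarrow W$; here one uses that $W_i$ is generated by $U_i = \Omega(W_i)$ under the action of the modes of $V$, so $\Phi$ is forced on generators and one checks it is well defined and $V$-equivariant using the Jacobi identity (again the same verification as produces the correspondence in Proposition \ref{correspondence}). Fourth, I would argue $\Phi$ is injective: its restriction to the degree-zero parts is the identity on $\Omega(W)$, and any kernel would be a $\CR$-graded submodule of $\bigoplus_i W_i$ whose intersection with each $\Omega(W_i)$ is zero, forcing it to be zero since each $W_i$ is generated by $\Omega(W_i)$ and is simple. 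Finally, I would argue $\Phi$ is surjective: the image is a $\CR$-graded submodule of $W$ containing all of $\Omega(W) = W(0)$; using hypothesis (1) (so that $W$ itself, being an increasing union of finitely generated and hence ordinary $\CR$-graded submodules, has $L(0)$ acting locally finitely) together with hypothesis (3) (so that every $\CR$-graded submodule's bottom lies at weight $\lambda$ and hence meets $W(0)$ nontrivially), one shows that if the cokernel $W/\mathrm{im}\,\Phi$ were nonzero it would contain a nonzero $\CR$-graded submodule, whose bottom would give a nonzero element of $\Omega$ not in the image of $\Omega(W)$ — a contradiction. Hence $\Phi$ is an isomorphism and $W \cong \bigoplus_i W_i$ is completely reducible.

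The main obstacle, I expect, is the surjectivity step: one must rule out the possibility that $W$ has a ``piece'' disjoint from the submodule generated by $W(0)$, and this is exactly where conditions (1) and (3) are essential. Without (1) one cannot guarantee $L(0)$ acts locally finitely on $W$, and without (3) a putative complementary submodule could have its lowest-weight space at a weight strictly greater than $\lambda$ in real part, so it need not intersect $W(0) = \Omega(W)$, and the argument collapses. The technical heart is therefore to show: any nonzero $\CR$-graded $V$-submodule $W'$ of an ordinary $\CR$-graded module has $\Omega(W') \neq 0$ and in fact $\Omega(W') \subseteq W'(0)$ with $W'(0) \subseteq W(0)$, so that a complement of $\mathrm{im}\,\Phi$ would produce new $\Omega$-vectors. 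Establishing that every $\CR$-graded $V$-submodule of $W$ is itself ordinary and has a nonzero degree-zero space — using that $W$ is a union of finitely generated $\CR$-graded submodules, each of which is ordinary by (1), and that the degree grading is bounded below in real part — is the step I would spend the most care on. The rest is a careful but routine transcription of the Zhu-algebra correspondence machinery of \cite{Z, DN, LM} into the present $\Omega$-generated $\CR$-graded language.
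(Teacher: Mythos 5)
Your overall architecture---decompose $\Omega(W)$ into simple $A(V)$-modules using hypothesis (2), match these with simple $\CR$-graded modules via Proposition \ref{correspondence}, and then use hypotheses (1) and (3) to show that $W$ is exhausted by the submodule generated by $W(0)$---is the same as the paper's, and your diagnosis of where (1) and (3) enter (the surjectivity step) matches the paper's Case 3 exactly. The one genuine gap is your Step 3, the construction of $\Phi\colon \bigoplus_i W_i \to W$. A nonzero $V$-module map out of the simple module $W_i$ is automatically injective, so the existence of $\Phi|_{W_i}$ extending $U_i \hookrightarrow W$ is \emph{equivalent} to the assertion that the submodule $\langle U_i\rangle \subseteq W$ generated by $U_i$ is isomorphic to $W_i$, i.e.\ is itself simple. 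This is not ``forced on generators'' and is not delivered by the Jacobi identity: both $W_i$ and $\langle U_i\rangle$ are quotients of the universal module built on the $A(V)$-module $U_i$, and a relation $\sum v^k_{n_k}\cdots v^1_{n_1}u = 0$ that holds in $W_i$ need not hold in $W$; a priori $\langle U_i\rangle$ could be a non-simple quotient sitting strictly above $W_i$, in which case your $\Phi$ does not exist as described. The substantive content of your Step 3 is therefore precisely the simplicity of $\langle U_i\rangle$, which you never argue.

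The paper sidesteps this by never mapping in from the abstract simples: it works directly with $\tilde W = \langle \Omega(W)\rangle$ and proves (its Case 1) that the submodule generated by a simple $A(V)$-summand is simple, by observing that for any submodule $\tilde W'$ the space $\Omega(\tilde W') = \Omega(W)\cap \tilde W'$ is an $A(V)$-submodule of the simple module $U_i$, hence is $0$ or all of $U_i$, with the $\Omega(\tilde W')=0$ alternative forcing $\tilde W' = 0$. If you insert that argument as the justification for your Step 3, it simultaneously gives your injectivity Step 4, and the rest of your proof goes through: your surjectivity argument---using (3) to pin $\Omega$ of every subquotient at generalized $L(0)$-eigenvalue $\lambda$, so that a nonzero cokernel would produce new $\Omega$-vectors in degree $0$ outside the image of $\Omega(W)$---is the paper's Case 3 in only slightly different clothing.
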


\begin{proof} Let $W$ be a $\mathbb{C}_{Re>0}$-graded $V$-module. We will show that $W$ is a completely reducible $\CR$-module by considering the following cases:
\begin{enumerate}
\item[Case 1]  {\bf $\Omega(W)$ is a simple $A(V)$-module and $W$ is generated by $\Omega(W)$.}  If $\tilde W$  is a $V$-submodule of $W$, then  $\Omega(\tilde W)$ is an $A(V)$-submodule of $\Omega(W)$ and therefore by the assumption that $\Omega(W)$ is simple, $\Omega(\tilde W)$ must be the trivial $A(V)$-module or $\Omega(W)$.  Since $W$  (and thus $\tilde W$) is generated by $\Omega(W)$, this implies that $\tilde W = 0$ or $\tilde W$ is generated by $\Omega(W)$ and is thus $W$, assuming $W \neq 0$. Therefore $W$ is a simple $\mathbb{C}_{Re>0}$-graded $V$-module.

\item[Case 2] { \bf $\Omega(W)$ is not a simple $A(V)$-module and $W$ is generated by $\Omega(W)$.}  Since $\Omega(W)$ is not simple and $A(V)$ is  finite-dimensional semisimple, $\Omega(W)$ is the direct sum of simple $A(V)$-modules, say $\Omega(W) = \bigoplus_{i \in I} \Omega(W)^i$ where $\Omega(W)^i$ is simple for $i \in I$, for $I$  some indexing set.  Thus, if $W$ is generated by $\Omega(W)$, F we set $W^i:=\langle \Omega(W)^i\rangle$ and we obtain $W = \bigoplus_{i \in I} W^i$ where each $W^i$ is  generated by the simple $A(V)$-module $\Omega(W)^i$ and is thus simple by the  Case 1 argument.  Therefore, when $W$ is generated by its  lowest weight vectors,  in this case $\Omega(W)$, we have that $W$ is a completely reducible $\mathbb{C}_{Re>0}$-graded $V$-module. 

\item[Case 3]  {\bf $W$ is not generated by $\Omega(W)$}. We will show that $W$ is completely reducible by further analyzing two subcases. First we
let $\tilde W$ be the submodule of $W$ generated by $\Omega(W)$.  Then $\tilde W$ is completely reducible by Case 2, i.e. $\tilde W = \bigoplus_{i \in I} \tilde W^i$ for $\tilde W^i$ irreducible.  Thus $\tilde W(0) = \bigoplus_{i \in I} \tilde W^i(0) = \bigoplus_{i \in I} \Omega(\tilde W^i) = \Omega(W)$ by Proposition \ref{strongly-generated-LM-prop}.  Then  we have $\tilde W= \tilde W (0) \bigoplus_{\mu\in\mathbb{C}, Re(\mu)>0} \tilde W(\mu)$ with $\tilde W(0)$ a generalized eigenspace for $L(0) = \omega^{\tilde W}_1$ with eigenvalue $\lambda$, so that $\mathrm{Spec}_{\tilde W(0)} L(0) = \lambda$. Moreover, we have $\tilde W(0) = \Omega(W)$ generates $\tilde W$, and for $m\in \mathbb{Z}$ and $v \in V$, we have that $L(0)$ acts on $v_m \tilde W(0)$ via the $L(0)$-eigenvalue $\mathrm{wt} \, v -m -1 + \mathrm{wt} \,  w$  for $w \in \tilde W(0)$. Thus $\tilde W$ is graded by $L(0)$-generalized eigenspaces with eigenvalues of the form  $(\mathrm{Spec}_V L(0) + \lambda + \mathbb{N} ) \cap \{ \mu + \lambda \in \mathbb{C} \; | \; \mu \in \mathrm{Spec}_VL(0) \}  = \mathrm {Spec}_V L(0) + \lambda$, i.e.
\[\tilde W  = \tilde W_{\lambda}  \bigoplus_{\mu \in \mathbb{C}, \, Re(\mu) > 0} \tilde W_{\lambda + \mu}  = \tilde W_\lambda  \bigoplus_{{\mu \in \mathrm{Spec}_V L(0)}\atop{\mu\neq 0}} \tilde W_{\lambda + \mu} ,\]
with $\tilde W_\lambda =\tilde  W(0)$ (where we have used the fact that $v_m \tilde W(0) = 0$ for $m>0$ since $\tilde W(0) = \Omega(W)$).
Next, we consider the module $W/\tilde W$.  Since we are assuming $W$ is not generated by $\Omega(W)$, we have that $W/\tilde W \neq 0$. This implies $\Omega (W/\tilde W)  \neq 0$.  We analize the following two subcases to show that under the assumptions of Case 3, $W$ must be completely reducible:

Case 3I. Suppose $W/\tilde W$ is completely reducible.  Then as above, $(W/\tilde W)(0) = \Omega(W/\tilde W)$ and every $w + \tilde W \in W/\tilde W$ is contained in some $L(0)$-generalized eigenspace, i.e.  $w + \tilde W \in (W/\tilde W)_{\lambda + \mu}$ for some $\mu \in \mathrm{Spec}_V L(0)$.  But then $W$ itself is $(\mathrm{Spec}_VL(0) + \lambda)$-graded by $L(0)$-generalized eigenspaces.  In addition $W_\lambda$ is an $A(V)$-module and thus completely reducible.  Thus the submodule of $W$ generated by $W_\lambda$ is completely reducible. Denote this by $W'$.   But then if $W \neq W'$ there exist elements in $\Omega(W/W')$ that are not in $ W_\lambda$ and thus are in a generalized eigenspace for $L(0)$ of the form  $\lambda + \mu$ for $\mu \neq 0$ which contradicts the fact that $\Omega(W/W')$ is an $A(V)$-module and thus in the $\lambda$ generalized eigenspace for $L(0)$.  Therefore $W = W'$ and $W$ is completely reducible. 

Case 3II.  Suppose $W/\tilde W$ is not completely reducible.  Then replace $W/\tilde W$ with the submodule of $W/\tilde W$ generated by $\Omega(W/\tilde W)$, which is $U/\tilde W$ for some submodule $U$ of $W$.  Then by the argument above, since $U/\tilde W$ is completely reducible, every $u + \tilde W \in U/\tilde W$ is contained in some $L(0)$-generalized eigenspace, i.e.  $u + \tilde W \in (U/\tilde W)_{\lambda +\mu}$ for some $\mu \in \mathrm{Spec}_VL(0) \smallsetminus 0$.  But then $U$ itself is  $(\mathrm{Spec}_VL(0) + \lambda)$-graded by $L(0)$-generalized eigenspaces.  In addition $U_\lambda$ is an $A(V)$-module and thus completely reducible.  Thus the submodule of $W$ generated by $U_\lambda$ is completely reducible. Denote this by $U'$.   But then if $U \neq U'$ there exist elements in $\Omega(U/U')$ that are not in $U_\lambda$ and thus are in a generalized eigenspace for $L(0)$ of the form  $\lambda + \mu$ for $\mu \neq 0$ which contradicts the fact that $\Omega(U/U')$ is an $A(V)$-module and thus in the $\lambda$ generalized eigenspace for $L(0)$.  Therefore $U = U'$ and is completely reducible.  

Finally, we will show below that $\tilde W$, the submodule of $W$ generated by $\Omega(W)$, is a maximal completely reducible submodule of $W$.  This would then imply that $\tilde W \subset U \subset \tilde W$, implying $\Omega(W/\tilde W)$ generates the trivial module $\tilde W/\tilde W$, and thus must be the trivial $A(V)$-module, which implies $W = \tilde W$, and so $W$ is completely reducible. 

Therefore we only have left to show that $\tilde W$ is a maximal completely reducible submodule of $W$.  Indeed, if $U$ is also a maximal completely reducible submodule, then both $\tilde W$ and $U$  are generated by $\Omega(W)$, and thus equal.  
\end{enumerate}
This completes the proof. 
\end{proof}

\begin{rmk} Note that in the proof above, one of the key facts used repeatedly is that for the class of $\mathbb{C}$-graded vertex algebras that we are working dealing with, namely  $\Omega$-generated $\CR$-graded vertex operator algebras, we have that $\Omega(W) = W(0)$ for all simple $\CR$-graded $V$-modules $W$, i.e. Proposition \ref{strongly-generated-LM-prop} (ii) holds.
\end{rmk}

\subsection{5.1 Filtration of Zhu algebras}
 
In \cite{Z} Zhu introduced two associative algebras related to a vertex operator algebra $V$, the Zhu algebra $A(V)$ and the $C_2$ algebra $V/C_2(V)$. Moreover, to prove that $V/C_2(V)$ is a Poisson algebra, Zhu built a filtration further studied and generalized by Li in \cite{Li5}. In this section, we use analogous constructions to describe $A(V)$ in the $\CR$-grading setting.

Let $V$ be  an $\Omega$-generated  $\CR$-graded vertex algebra with grading $V=\bigoplus_{\mu\in\mathbb{C}}V_{\mu}.$   We continue with the notation from the last section, and in particular of $\overline{|u|}$ for the ceiling of the real part of $\mu$ if  $u\in V_{\mu}$, and $V^r$ for the set of all elements $u\in V$ with $r=|u|-\overline{|u|}$.  And recall that then $V=\bigoplus_{r\in\mathbb{C}}V^r$ and for $r\neq 0$, we have $V^r\subseteq O(V)$. Observe that 
\begin{equation}\label{Zhu-only-zero}
A(V)=V/O(V)=V^0+O(V)/O(V)=\bigoplus_{n\in \mathbb{Z}}V_n+O(V)/O(V).
\end{equation} by Remarks \ref{zgradingv0} and \ref{v0zhu}.

Now, we further assume that the integer grading for $V^0$ is bounded below. For the purposes of the exposition, we write  $V^0=\bigoplus_{ n\in \mathbb{N}} V_n$  although the results will follow with little modification if $V^0 = \bigoplus_{n = N}^\infty V_n$ for some $N \in\mathbb{Z}$. Consider the filtration $\{F_t A(V)\}_{t\in \mathbb{N}}$ where \[F_t A(V)=\bigoplus_{j=0}^tV_j+O(V)/O(V)  \subset A(V).\] 
From the definition of $*$, i.e. Eqn.\ (\ref{define-*}), and by taking the residue, we have that for $u\in V^r,v\in V$, 
\begin{equation}\label{filtration1}
u*v \ = \ \delta_{r,0}\Res_z\frac{(1+z)^{|u|}}{z}Y(u,z)v \ = \ \delta_{r,0}\sum_{i\geq 0}{|u|\choose i}u_{i-1}v. 
\end{equation}  
 Also note that for homogeneous elements $u,v\in V$, we have $|u_{i-1}v|=|u|+|v|-i$.
Hence, we can conclude that 
\begin{equation} \label{multiply-grading}
F_s A(V) * F_t A(V)\subseteq F_{s+t}A(V) \qquad \mbox{ for $s,t\in\mathbb{N}$.} 
\end{equation}

 Letting $F_{-1}A(V)=0$, we define an  $\mathbb{N}$-grading on $A(V)$ by
\[gr_tA(V)=F_tA(V)/F_{t-1}A(V)\text{ for }t\in\mathbb{N},\]
so that 
\[grA(V)=\bigoplus_{t=0}^{\infty}gr_t A(V).\]

Observe that  by Eqn.\ (\ref{multiply-grading}), the multiplication of $A(V)$ induces an associative multiplication on the graded vector space $grA(V)$. In addition, we have the following lemma.

\begin{lem} Let $V$ be an  $\Omega$-generated $\CR$-graded vertex algebra such that $V^0=\bigoplus_{n=0}^{\infty}V_n$. Then $grA(V)$ is a commutative associative algebra. 
\end{lem}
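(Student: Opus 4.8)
The plan is to reduce the statement to commutativity, since associativity of the induced product on $grA(V)$ has already been observed to follow from the inclusion $F_sA(V)*F_tA(V)\subseteq F_{s+t}A(V)$ in Eqn.\ (\ref{multiply-grading}). So what remains is to check that for homogeneous $u\in V_s$ and $v\in V_t$ with $s,t\in\mathbb{N}$ one has $u*v-v*u\in F_{s+t-1}A(V)$; this forces $\overline{u}\cdot\overline{v}=\overline{v}\cdot\overline{u}$ in $gr_{s+t}A(V)=F_{s+t}A(V)/F_{s+t-1}A(V)$. By Remark \ref{v0zhu} (i.e.\ $A(V)=V^0/(O(V)\cap V^0)$) together with the definition of the filtration, every class in $gr_tA(V)$ has a representative of integer weight, so it suffices to work with such representatives.

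First I would apply Proposition \ref{Zhu-properties-prop}(v), which gives, for $u,v\in V^0$ homogeneous,
\[
u*v-v*u\ \equiv\ \Res_z(1+z)^{|u|-1}Y(u,z)v\ \mod O(V).
\]
Expanding $(1+z)^{|u|-1}=\sum_{j\geq 0}\binom{|u|-1}{j}z^j$ and $Y(u,z)v=\sum_{n\in\mathbb{Z}}u_nv\,z^{-n-1}$ and reading off the coefficient of $z^{-1}$ yields
\[
u*v-v*u\ \equiv\ \sum_{j\geq 0}\binom{|u|-1}{j}u_jv\ \mod O(V).
\]
Then comes a weight count: by Remark \ref{weight-gradingin-remark}, $u_jv\in V_{|u|+|v|-j-1}=V_{s+t-j-1}$, which is a space of integer weight (hence lies in $V^0$) and which vanishes unless $0\leq s+t-j-1$, because the integer grading of $V^0$ is bounded below by $0$. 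Thus every nonzero summand has weight at most $s+t-1$, so the whole sum lies in $\bigoplus_{j=0}^{s+t-1}V_j+O(V)/O(V)=F_{s+t-1}A(V)$, and therefore $u*v-v*u\in F_{s+t-1}A(V)$ as required. Extending bilinearly over the homogeneous components of $grA(V)$ gives commutativity.

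The computation is essentially routine; the only subtlety worth flagging is that Proposition \ref{Zhu-properties-prop}(v) is invoked precisely in order to replace $u*v-v*u$ by an honest finite sum of vectors of weight $\leq s+t-1$ before reading off the filtration degree, which avoids any direct analysis of how $O(V)$ meets the weight filtration. I would also note that it is exactly the hypothesis $V^0=\bigoplus_{n=0}^{\infty}V_n$ (boundedness below of the integer grading) that makes the sum $\sum_{j\geq 0}\binom{|u|-1}{j}u_jv$ finite and supported in weights $\leq s+t-1$; with the obvious index shifts the same argument works when $V^0=\bigoplus_{n=N}^{\infty}V_n$.
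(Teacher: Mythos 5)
Your proposal is correct and follows essentially the same route as the paper's proof: invoke Proposition \ref{Zhu-properties-prop}(v) to rewrite $u*v-v*u$ modulo $O(V)$ as $\sum_{j\geq 0}\binom{|u|-1}{j}u_jv$, then use $|u_jv|=|u|+|v|-j-1$ to place the result in $F_{s+t-1}A(V)$. The only cosmetic difference is that the paper disposes of the non-integer-weight case directly (if $u\in V^r$ or $v\in V^s$ with $r\neq 0$ or $s\neq 0$ then $u*v=0=v*u$ since $V^r\subseteq O(V)$), whereas you reduce to integer-weight representatives via Remark \ref{v0zhu}; both are fine.
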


\begin{proof} Let $u\in V^r$, $v\in V^s$. By the fact that $V^r \subset O(V)$ if $r \neq 0$, i.e. Proposition  \ref{Zhu-properties-prop}$(i)$  if either $r\neq0$ or  $s\neq 0$ then $u*v=0=v*u$, since either $u$ or $v$ is in $O(V)$. So assume that $u,v\in V^0$ are homogeneous elements, and $u\in F_tA(V)$, $v\in F_sA(V)$. Since by Proposition \ref{Zhu-properties-prop}$(v)$ 
\begin{eqnarray*}
u*v-v*u &=& \Res_z(1+z)^{|u|-1}Y(u,z)v\mod O(V)\\
&=& \Res_z\sum_{i\geq 0} {|u|-1\choose i}z^iY(u,z)v\mod O(V)\\
&=& \Res_z\sum_{i\geq 0}{|u|-1\choose i}u_iv\mod O(V)
\end{eqnarray*}
and $| u_iv|=|u|+|v|-i-1$, we can conclude that, modulo $O(V)$, $u*v-v*u\in F_{s+t-1}A(V)$, i.e., is zero in $gr_{t+s}A(V)$. Hence, $gr(V)$ is commutative.
\end{proof}
\begin{rmk} \label{dim} $grA(V)$ is isomorphic to $A(V)$ as a vector space.
\end{rmk}

Next, we study an upper bound for $\dim A(V)$ when $V$ is an $\Omega$-generated $\CR$-graded vertex algebra. For simplicity, we write $[u]$ for $u+O(V)$.  Consider the linear epimorphism 
\begin{eqnarray}\label{define-f}
f:V &\longrightarrow& grA(V)\\  
u & \mapsto & [u]+F_{k-1}A(V), \qquad \mbox{ for $u\in V_k$. } \nonumber 
\end{eqnarray}
Note that if $u\in V^r$ and $r>0$ then $f(u)=[0]+F_{k-1}A(V)$. Consequently, $u\in Ker(f)$.

 Now, let $u,v$ be homogeneous elements in $V$. 

\vspace{0.1cm}

\noindent\underline{Case 1:} $u\in V^0$. 

\noindent Since $u\circ v=\Res_z\frac{(1+z)^{\overline{|u|}}}{z^2}Y(u,z)v=\sum_{i\geq 0}{\overline{|u|}\choose i}u_{i-2}v\in O(V)$ and $$|u_{j-2}v|=|u|+|v|-j+1\leq |u|+|v| \text{ when }j\geq 1,$$ we have $$f(u_{-2}v)=[u_{-2}v]+F_{|u|+|v|}A(V)=[0]+F_{|u|+|v|}A(V).$$ 
Moreover, $u_{-2}v\in Ker(f)$.

\vspace{0.1cm}

\noindent\underline{Case 2:} $u\in V^r$ such that $r>0$. 

\noindent Since $u\circ v=\Res_z\frac{(1+z)^{\overline{|u|}-1}}{z}Y(u,z)v=\sum_{j\geq 0}{\overline{|u|}-1\choose j}u_{j-1}v\in O(V)$ and 
$$|u_{j-1}v|=|u|+|v|-j<|u|+|v|\text{ for all }j\geq 1, $$ these imply that $$f(u_{-1}v)=[u_{-1}v]+F_{|u|+|v|-1}A(V)=[0]+F_{|u|+|v|-1}A(V).$$ In addition, $u_{-1}v\in Ker(f)$.

\noindent In conclusion, we have the following theorem.

\begin{thm}\label{upperbound} Let $V$ be an $\Omega$-generated $\CR$-graded vertex algebra with  integer graded part $V^0$ as in equation \ref{vr} satisfying $V^0=\bigoplus_{n=0}^{\infty}V_n$. Let $f:V\rightarrow gr A(V)$ be defined by (\ref{define-f}). Set 
\[ C(V)  =Span_{\mathbb{C}}\{a,u_{-2}v, b_{-1}w~|~ a,b \in V^r, \mbox{with $r\neq 0$},  ~u\in V^0,~ \mbox{and}~ v,w\in V\}.\]
Then $ C(V)  \subseteq Ker(f)$. In addition $f$ induces a linear epimorphism $\bar{f}$ from  $V/C(V)$ to $grA(V)$ and therefore,  $\dim A(V)\leq \dim  V/C(V)$. 
\end{thm}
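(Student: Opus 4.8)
The plan is to verify the three assertions in Theorem~\ref{upperbound} in order: first that $C(V)\subseteq\mathrm{Ker}(f)$, then that $f$ descends to an epimorphism $\bar f\colon V/C(V)\to grA(V)$, and finally to read off the dimension bound. The spadework for the first assertion has essentially been done in the discussion immediately preceding the theorem. Recall that $C(V)$ is spanned by elements of three types: (a) any $a\in V^r$ with $r\neq 0$; (b) $u_{-2}v$ with $u\in V^0$ and $v\in V$; (c) $b_{-1}w$ with $b\in V^r$, $r\neq 0$, and $w\in V$. For type (a), the remark after (\ref{define-f}) already shows $f(a)=[0]+F_{k-1}A(V)$ because $a\in O(V)$ by Proposition~\ref{Zhu-properties-prop}(i), so $a\in\mathrm{Ker}(f)$. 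For type (b), Case~1 of the preceding discussion shows $u_{-2}v\in\mathrm{Ker}(f)$: one writes $u\circ v=\sum_{i\ge0}\binom{\overline{|u|}}{i}u_{i-2}v\in O(V)$, and since all terms with $i\ge1$ have weight $|u|+|v|-i+1\le|u|+|v|$ they lie in $F_{|u|+|v|}A(V)$, forcing $u_{-2}v\equiv 0$ in $gr_{|u|+|v|}A(V)$. For type (c), Case~2 does the same with the $\circ$-product formula $u\circ v=\sum_{j\ge0}\binom{\overline{|u|}-1}{j}u_{j-1}v$ valid for $u\in V^r$, $r\neq 0$; the terms with $j\ge1$ have weight $|u|+|v|-j<|u|+|v|$, hence $b_{-1}w\in\mathrm{Ker}(f)$. (One should be slightly careful here: these computations are stated for homogeneous $u,v$, so one first reduces to homogeneous spanning elements and then extends by linearity, which is legitimate since $C(V)$ and $\mathrm{Ker}(f)$ are both linear subspaces.) This establishes $C(V)\subseteq\mathrm{Ker}(f)$.

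Given $C(V)\subseteq\mathrm{Ker}(f)$, the universal property of the quotient vector space yields a unique linear map $\bar f\colon V/C(V)\to grA(V)$ with $\bar f\circ\pi=f$, where $\pi\colon V\to V/C(V)$ is the canonical projection. Since $f$ is a linear epimorphism (it is surjective because every element of $grA(V)$ is $[u]+F_{k-1}A(V)$ for some homogeneous $u\in V_k$, which is $f(u)$, and a general element of $grA(V)$ is a finite sum of such), and $\pi$ is surjective, the composite characterization forces $\bar f$ to be surjective as well. Thus $\bar f$ is a linear epimorphism from $V/C(V)$ onto $grA(V)$.

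Finally, surjectivity of $\bar f$ gives $\dim grA(V)\le\dim V/C(V)$. By Remark~\ref{dim}, $grA(V)\cong A(V)$ as vector spaces, so $\dim A(V)=\dim grA(V)\le\dim V/C(V)$, which is the claimed bound. I do not anticipate a serious obstacle in this argument; the only point requiring mild care is the passage from the homogeneous computations of Cases~1 and~2 to the full linear span defining $C(V)$, and the bookkeeping of which filtration level $F_tA(V)$ each product lands in so that the relevant products are seen to vanish in the associated graded rather than merely in a lower filtration piece. Everything else is a formal consequence of the universal property of quotients together with the residue computations already carried out in the preceding paragraphs.
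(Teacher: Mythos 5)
Your proposal is correct and follows essentially the same route as the paper: the paper establishes $C(V)\subseteq\mathrm{Ker}(f)$ via exactly the two residue computations (Cases 1 and 2 preceding the theorem) together with Proposition \ref{Zhu-properties-prop}(i) for the $V^r$, $r\neq 0$, part, and then deduces the epimorphism and the dimension bound just as you do. Your added remarks on reducing to homogeneous elements and on tracking the filtration level are sensible bookkeeping but do not change the argument.
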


 Recall that an $\Omega$-generated $\CR$-graded vertex algebra $V$ is endowed with the endomorphism $D$ as defined in Remark \ref{D}. Using the fact that $ (Du)_n=-nu_{n-1}$ for $u\in V$, $n\in\mathbb{Z}$, we  have the following corollary:

\begin{cor}\label{elementsW}
 Let $V$ be a  $ \CR$-graded vertex algebra such that $V^0=\bigoplus_{n=0}^{\infty}V_n$. Then
\begin{enumerate}
\item for $u\in V^0$, $v\in V$,  we have $u_{-n}v\in  C(V)$ for all $n\geq 2$;
\item for $b\in V^r$ with $r\neq 0$, and $w\in V$, then $b_{-m}w\in  C(V)$ for all $m\geq 1$.
\end{enumerate}
\end{cor}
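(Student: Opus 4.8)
The plan is to leverage Theorem \ref{upperbound}, whose set $C(V)$ already contains all vectors of the form $u_{-2}v$ with $u\in V^0$ and all vectors $b_{-1}w$ with $b\in V^r$, $r\neq 0$, together with the $D$-derivative relation $(Du)_n=-nu_{n-1}$ recalled just before the corollary. The idea is a downward induction on the mode index, pushing the known base cases ($n=2$ in part 1, $m=1$ in part 2) to all larger indices by repeatedly applying $D$ and using that $D$ preserves the relevant graded pieces $V^0$ and $V^r$.

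For part (1), I would argue as follows. Fix $u\in V^0$ homogeneous and $v\in V$. The base case $n=2$ is immediate from the definition of $C(V)$. For the inductive step, suppose $u_{-n}v\in C(V)$ for some $n\geq 2$; I want $u_{-(n+1)}v\in C(V)$. Apply the identity $(Du)_{-n}=-(-n)u_{-n-1}=n\,u_{-n-1}$, so that
\[
u_{-(n+1)}v=\tfrac{1}{n}(Du)_{-n}v.
\]
Now $Du\in V^0$ as well: by Remark \ref{D}, $D$ raises weight by $1$, and if $u$ has integer weight then so does $Du$, hence $Du\in V^0$. Since $(Du)_{-n}v$ has the form ``(element of $V^0$) acting by a mode $\leq -2$ on an element of $V$,'' I can invoke the inductive hypothesis applied to $Du$ in place of $u$ to conclude $(Du)_{-n}v\in C(V)$, hence $u_{-(n+1)}v\in C(V)$. (Strictly, one takes the induction hypothesis to be the statement ``$x_{-n}v\in C(V)$ for \emph{every} homogeneous $x\in V^0$ and every $v\in V$,'' so that it is available for $Du$.) Extending by linearity over non-homogeneous $u$ completes part (1).

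For part (2), the argument is the same in spirit but starts one mode lower. Fix $b\in V^r$ with $r\neq 0$, homogeneous, and $w\in V$; the base case $m=1$ is built into $C(V)$. For the step, assume $b_{-m}w\in C(V)$ for all homogeneous $b\in V^r$ and all $w\in V$, for some $m\geq 1$, and write
\[
b_{-(m+1)}w=\tfrac{1}{m}(Db)_{-m}w.
\]
Here I need $Db\in V^r$: since $D$ raises weight by $1$, if $b$ has weight $\mu$ with $\mu-\overline{|\mu|}=r$ then $Db$ has weight $\mu+1$ with $\overline{|Db|}=\overline{|\mu|}+1$, so $|Db|-\overline{|Db|}=r$ again, and in particular $Db\in V^r$ with the same $r\neq 0$. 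Applying the inductive hypothesis to $Db$ gives $(Db)_{-m}w\in C(V)$, hence $b_{-(m+1)}w\in C(V)$; linearity finishes the general case.

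The only genuinely non-routine point — and the one I would state carefully rather than gloss over — is the closure of $V^0$ and of each $V^r$ under $D$, i.e. that $D$ shifts weight by exactly $1$ and therefore shifts the ceiling $\overline{|\cdot|}$ by exactly $1$ on homogeneous vectors, leaving the ``fractional part'' $r=|u|-\overline{|u|}$ unchanged. This is exactly Remark \ref{D} combined with the observation that $\overline{|u|+1}=\overline{|u|}+1$ for any complex weight. Once this is in hand, the rest is the bookkeeping double induction sketched above, and the main subtlety is simply making sure the inductive hypothesis is phrased uniformly over all generators of the relevant graded piece so that it can be reapplied to $Du$ (resp. $Db$).
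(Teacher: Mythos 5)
Your proof is correct and follows exactly the route the paper intends: the paper derives this corollary in one line from the identity $(Du)_n=-nu_{n-1}$, and your induction on the mode index, together with the verification that $D$ preserves $V^0$ and each $V^r$ (since $D$ shifts weights by exactly $1$ and hence shifts the ceiling $\overline{|\cdot|}$ by exactly $1$), is the careful version of that argument. No gaps; the uniform phrasing of the inductive hypothesis over all homogeneous elements of the relevant graded piece is indeed the one point worth spelling out, and you did.
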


\begin{rmk} It is necessary to assume that $V^0=\bigoplus_{n\in\mathbb{Z}}V_n$ is bounded below. Otherwise the theorem above is false. For instance, when $L$ is a nondegenerate non positive definite even lattice of an arbitrary rank, it was shown in \cite{Y} that $A(V_L^+)\neq 0$ and in \cite{JY} that $\dim (V_L^+/C(V_L^+))=0$. In that context, $C(V_L^+)=C_2(V_L^+)$, the $C_2$ space originally defined by Zhu \cite{Z} for $\mathbb{Z}$-graded vertex algebras. However, as noted earlier, it is enough to just assume the grading for $V^0$ is bounded from below, not necessarily by zero.
\end{rmk}

\subsection{5.2 Main results on rationality for certain $\CR$-graded vertex  operator algebras} 
 In this section, we use the construction of the Zhu algebra $A(V)$ presented above to prove that under some mild conditions an $\Omega$-generated $\CR$-graded vertex algebra admits only one $\CR$-graded simple module.

\begin{thm}\label{A(V)} Let $V$ be  a finitely $\Omega$-generated  $\CR$-graded vertex algebra generated as in Remark \ref{strogen} by $u^1,\dots ,u^k$.  Let $V^0$ be the integer graded part of $V$ as in Eqn.\ \ref{vr}. Assume that 
\begin{enumerate}
\item For $j\in \{1,\dots ,k\}$, $|u^j|$ is not an integer. Namely,the strong generators satisfy $u^j \in V \setminus V^0$ for $1\leq j\leq k$ ;
\item $V^0=\bigoplus_{n=0}^{\infty}V_n$. 
\end{enumerate}
Then, $\dim A(V)=1$ and $A(V)\cong \mathbb{C}$. 
\end{thm}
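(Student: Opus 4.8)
The plan is to show that $V/C(V)$ is one-dimensional, spanned by the image of the vacuum $\mathbf{1}$, and then invoke Theorem~\ref{upperbound} together with the obvious fact that $A(V) \neq 0$ (since $\mathbf{1}$ is a nonzero idempotent for $*$). First I would observe that, by Remark~\ref{strogen} and the hypothesis, every element of $V$ is a finite linear combination of vectors of the form $u^{i_1}_{-n_1} \cdots u^{i_m}_{-n_m}\mathbf{1}$ with $n_1, \dots, n_m \in \mathbb{Z}_+$ and each $u^{i_j}$ a strong generator; since each $u^{i_j} \in V \setminus V^0$, i.e. $u^{i_j} \in V^{r_j}$ for some $r_j \neq 0$, Corollary~\ref{elementsW}(2) immediately gives $u^{i_j}_{-n_j}(\,\cdots\,) \in C(V)$ for every $n_j \geq 1$. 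Hence every such monomial of length $m \geq 1$ lies in $C(V)$, so $V = \mathbb{C}\mathbf{1} + C(V)$ and therefore $\dim V/C(V) \leq 1$.

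Next I would combine this with Theorem~\ref{upperbound}: the induced epimorphism $\bar f : V/C(V) \twoheadrightarrow grA(V)$ gives $\dim grA(V) \leq \dim V/C(V) \leq 1$, and by Remark~\ref{dim} we have $\dim A(V) = \dim grA(V) \leq 1$. On the other hand $A(V)$ is a unital associative algebra with unit $\mathbf{1} + O(V)$: indeed $\mathbf{1} \in V^0$, and for any homogeneous $v \in V^0$ one checks from Eqn.~(\ref{define-*}) that $\mathbf{1} * v = \Res_z \frac{(1+z)^0}{z} Y(\mathbf{1},z)v = \Res_z \frac{1}{z} v = v$, and $v * \mathbf{1} \equiv v \bmod O(V)$ follows from Proposition~\ref{Zhu-properties-prop}(v) (or directly, since $Y(v,z)\mathbf{1} = e^{zD}v$ has only nonnegative powers of $z$ and picking out the residue yields $v$ plus terms of lower filtration, all of which we have just shown lie in $C(V) \subseteq O(V)$ — more carefully, one argues $\mathbf{1} \notin O(V)$). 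So $A(V) \neq 0$, forcing $\dim A(V) = 1$ and $A(V) \cong \mathbb{C}$.

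The only genuinely delicate point is verifying that $\mathbf{1} \notin O(V)$, i.e. that the quotient $A(V)$ does not collapse to zero. This should follow from the grading: $O(V)$ is spanned by the elements $u \circ v$, and a degree/filtration count as in the proof of the commutativity lemma shows that, working modulo the non-integer-graded part $V^r$ ($r \neq 0$) which is entirely absorbed into $O(V)$, the element $\mathbf{1}$ has the strictly minimal filtration degree $0$ in $V^0$, while every generator $u \circ v$ with $u, v \in V^0$ homogeneous either vanishes or has filtration degree $\geq 1$ (the $\circ$ product lowers weight by at least, and generically more than, the naive amount and in particular cannot produce a weight-zero contribution unless $u,v$ are themselves forced into $\mathbb{C}\mathbf{1}$, where $\mathbf{1}\circ v \in O(V)$ is checked directly to be a combination of higher-filtration terms). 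Since $V^0 = \bigoplus_{n\ge 0} V_n$ with $V_0 = \mathbb{C}\mathbf{1}$ (as $V$ is $\Omega$-generated and $\mathbf{1} \in V(0) = \Omega(V)$, and $V_0 \subseteq V^0 \cap V(0)$), no relation in $O(V)$ can involve $\mathbf{1}$ with a nonzero coefficient. I expect this non-vanishing step to be the main obstacle; the rest is bookkeeping with Corollary~\ref{elementsW} and Theorem~\ref{upperbound}.
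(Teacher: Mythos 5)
Your argument is correct and is essentially the paper's own proof: Corollary \ref{elementsW}(2) applied to the non-integer-weight strong generators gives $V=\mathbb{C}{\bf 1}+C(V)$, and Theorem \ref{upperbound} (with Remark \ref{dim}) then yields $\dim A(V)\le 1$. The only point where you go beyond the paper---checking that $A(V)\ne 0$, i.e.\ ${\bf 1}\notin O(V)$---is a legitimate issue that the paper's proof passes over silently; your filtration sketch for it is shakier than it needs to be (the inclusion $V_0\subseteq V(0)$ is not justified in general, and the vanishing of the weight-zero component of $u\circ v$ is really just the vanishing of the binomial coefficient $\binom{|u|}{|u|+|v|+1}$ for $u,v\in V^0$), whereas the cleanest route is to observe that $A(V)$ acts on $\Omega(V)\neq 0$ via zero modes (Proposition \ref{correspondence}) with $o({\bf 1})={\bf 1}_{-1}=\mathrm{Id}$, so the class of ${\bf 1}$ cannot vanish in $A(V)$.
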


\begin{proof} By  Corollary \ref{elementsW}, using that $u^j\in V^r$ for $r\neq 0$ we can conclude that $u^{j_1}_{-n_1}\cdots u^{j_t}_{-n_t}{\bf 1}\in  C(V)$ for all $u^{j_i}\in\{u^1,\dots ,u^k\}$, $n_i \geq 0$. Hence, $V/ C(V) =\mathbb{C}{\bf 1}+  C(V)$. Moreover, in light of Theorem \ref{upperbound} this implies that, $\dim A(V)=1$ and $A(V)\cong \mathbb{C}$ as desired.
\end{proof}

\begin{thm}\label{Vrational} Let $V$ be  a finitely $\Omega$-generated $\CR$-graded vertex operator algebra that is finitely generated by $u^1, \dots ,u^k$,  as in Remark \ref{strogen} and in addition satisfies the following:
\begin{enumerate}
\item For each $j\in \{1, \dots ,k\}$, $|u^j|$ is not an integer;
\item $V^0=\bigoplus_{n=0}^{\infty}V_n$;  and
\item  Every simple $\CR$-graded $V$-module is ordinary.
\end{enumerate}
Then $V$ is rational, and has only one simple $\mathbb{C}_{Re>0}$-graded $V$-module.
\end{thm}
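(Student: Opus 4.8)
The plan is to derive Theorem \ref{Vrational} by combining the two principal tools already at hand: the explicit computation of the level zero Zhu algebra in Theorem \ref{A(V)} and the abstract rationality criterion of Theorem \ref{rationalcondition}. In other words, I would show that the hypotheses of the present theorem force $A(V)\cong\mathbb{C}$, and that this, together with hypothesis (3), is exactly enough to feed into Theorem \ref{rationalcondition}.

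First I would apply Theorem \ref{A(V)}. Since $V$ is finitely $\Omega$-generated by $u^1,\dots,u^k$ (as in Remark \ref{strogen}), with each $|u^j|$ non-integral (hypothesis (1)) and $V^0=\bigoplus_{n=0}^{\infty}V_n$ (hypothesis (2)), the hypotheses of Theorem \ref{A(V)} are met verbatim, so $\dim A(V)=1$ and $A(V)=\mathbb{C}({\bf 1}+O(V))\cong\mathbb{C}$ as associative algebras.

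Next I would verify the three conditions of Theorem \ref{rationalcondition} for $V$. Condition (1) there --- every simple $\mathbb{C}_{Re>0}$-graded $V$-module is ordinary --- is precisely hypothesis (3) of the present theorem. Condition (2) there --- $A(V)$ finite-dimensional semisimple --- holds since $A(V)\cong\mathbb{C}$ is a field. For condition (3) there, note that $\omega\in V_2\subseteq V^0$, so $\omega+O(V)$ is a well-defined element of $A(V)$; as $A(V)$ is one-dimensional and spanned by ${\bf 1}+O(V)$, we have $\omega+O(V)=\lambda\,({\bf 1}+O(V))$ for a unique $\lambda\in\mathbb{C}$. Up to isomorphism there is a single irreducible $A(V)$-module, the one-dimensional module $\mathbb{C}$, on which $o(\omega)=L(0)$ acts by the scalar $\lambda$; hence condition (3) of Theorem \ref{rationalcondition} is satisfied automatically. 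Theorem \ref{rationalcondition} then gives that every $\mathbb{C}_{Re>0}$-graded $V$-module is completely reducible, i.e.\ $V$ is rational. Finally, for the count of simple modules I would invoke the one-to-one correspondence of Proposition \ref{correspondence}(ii) between simple $A(V)$-modules and simple $\mathbb{C}_{Re>0}$-graded $V$-modules: since $A(V)\cong\mathbb{C}$ has a unique simple module up to isomorphism, $V$ has, up to isomorphism, a unique simple $\mathbb{C}_{Re>0}$-graded module.

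I do not expect a genuine obstacle here; the mathematical content lies entirely in Theorems \ref{A(V)} and \ref{rationalcondition} and Proposition \ref{correspondence}. The only points requiring care are bookkeeping: confirming that $\omega$ lies in the integer-graded part $V^0$ so that it descends to an element of $A(V)$, and observing that the ``same constant eigenvalue'' requirement of Theorem \ref{rationalcondition}(3) is vacuous once $A(V)$ is one-dimensional. The subtlest part of the writeup is simply matching, line by line, the hypotheses of the cited results to those of the present statement rather than proving anything new.
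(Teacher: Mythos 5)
Your proposal is correct and follows essentially the same route as the paper: apply Theorem \ref{A(V)} to get $A(V)\cong\mathbb{C}$, invoke Proposition \ref{correspondence}(ii) for uniqueness of the simple $\mathbb{C}_{Re>0}$-graded module, and feed hypothesis (3) together with the one-dimensionality of $A(V)$ into Theorem \ref{rationalcondition} for rationality. Your explicit verification that condition (3) of Theorem \ref{rationalcondition} becomes automatic once $A(V)$ is one-dimensional is a detail the paper leaves implicit, but the argument is the same.
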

\begin{proof} Since $V$ is  an $\Omega$-generated  $\CR$-graded vertex operator algebra, using Theorem \ref{A(V)} we have that $\dim A(V)=1$. By Proposition \ref{correspondence} $(ii)$ we can conclude that $V$ has only one simple $\mathbb{C}_{Re>0}$-graded $V$-module. Finally, by Theorem \ref{rationalcondition} and condition (3), we can conclude that $V$ is rational.
\end{proof}

\subsection{5.3 On Rationality of   $\CR$-graded Weyl vertex operator algebras }\label{Weyl-rational-subsection}
 In this section we apply Theorems \ref{rationalcondition} and \ref{A(V)} to Weyl vertex algebras $_\mu M$ with certain conformal structures as classified in Theorem \ref{mu-theorem} to prove the rationality of $_\mu M$, for those values of $\mu$ that give $_\mu M$ the structure of  an $\Omega$-generated $\CR$-graded vertex operator algebra, including, for instance when  $0 < Re(\mu)  < 1$ and $Im(\mu) = 0$, which corresponds to the case of the central charge $c$ real and in the range $-1<c<2$.

\begin{thm}\label{rank1}
Let  $\mu\in\mathbb{C}$ such that one of the following holds:

(i) $0 < Re(\mu) \leq 1/2$ and $|Im(\mu)| \leq Re(\mu)$;

or 

(ii) $0< Re(1-\mu)< 1/2$ and $|Im(\mu)| \leq Re(1-\mu)$.\\ 
Then $(\, _\mu M,\omega_{\mu})$ is a rational   $\Omega$-generated $\CR$-graded vertex operator algebra, and  has only one simple $\mathbb{C}_{Re>0}$-graded module which is in fact a simple ordinary $_\mu M$-module, namely $_\mu M$ itself.
\end{thm}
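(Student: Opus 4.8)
The strategy is to verify that $(\,_\mu M,\omega_\mu)$ satisfies the three hypotheses of Theorem~\ref{Vrational}, and then invoke that theorem directly. By Theorem~\ref{mu-theorem}~I, the conditions (i) and (ii) on $\mu$ are precisely the ones guaranteeing that $_\mu M$ is a finitely $\Omega$-generated $\CR$-graded vertex operator algebra, with $\Omega(\,_\mu M) = \mathbb{C}{\bf 1}$ and strong generating set $S = \{a(-1){\bf 1},\, a^*(0){\bf 1}\}$. So the ambient structural hypothesis of Theorem~\ref{Vrational} is already in hand, and it remains to check the three numbered conditions.

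First I would verify condition (1) of Theorem~\ref{Vrational}: the weights of the strong generators are not integers. By Eqns.~(\ref{deggen1})--(\ref{deggen2}), $|a(-1){\bf 1}| = 1-\mu$ and $|a^*(0){\bf 1}| = \mu$. Under either hypothesis (i) or (ii), $\mathrm{Re}(\mu)$ lies strictly between $0$ and $1$ (in case (i), $0 < \mathrm{Re}(\mu) \le 1/2$; in case (ii), $0 < \mathrm{Re}(1-\mu) < 1/2$ forces $1/2 < \mathrm{Re}(\mu) < 1$), so neither $\mu$ nor $1-\mu$ can be an integer. Hence both generators lie in $V \setminus V^0$. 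Next, condition (2): $V^0 = \bigoplus_{n=0}^\infty V_n$, i.e. the integer-weight part is $\mathbb{N}$-graded. From the explicit $L^\mu(0)$-grading formula (\ref{grading}), a monomial has weight $r+s+k(1-\mu)+t\mu$ with $r,s,k,t \in \mathbb{N}$; such a weight is a (necessarily nonnegative) integer exactly when the $\mu$-dependent part $k(1-\mu)+t\mu = k + (t-k)\mu$ is an integer, which (since $\mu \notin \mathbb{Q}$ in the generic case, and more carefully since $\mathrm{Re}(\mu) \in (0,1)$) forces $t = k$, giving integer weight $r+s+k \ge 0$. So $V^0$ is indeed $\mathbb{N}$-graded, with $V_0 = \mathbb{C}{\bf 1}$.

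The main obstacle is condition (3): every simple $\CR$-graded $_\mu M$-module is ordinary, i.e. has finite-dimensional $L^\mu(0)$-eigenspaces and $\mathrm{Re}(\lambda) > 0$ for all but finitely many eigenvalues $\lambda$. For this I would use Theorem~\ref{A(V)}: conditions (1) and (2) just verified give $\dim A(\,_\mu M) = 1$ and $A(\,_\mu M) \cong \mathbb{C}$. By the correspondence in Proposition~\ref{correspondence}~(ii), there is then a unique simple $\CR$-graded $_\mu M$-module up to isomorphism, and by Proposition~\ref{strongly-generated-LM-prop}~(i) together with Proposition~\ref{strongly-generated-LM-prop}~(ii) its degree-zero space is $\Omega(W) = W(0)$, a simple $A(\,_\mu M)$-module, hence one-dimensional. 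Since $_\mu M$ itself is a simple $\CR$-graded module over itself (it is a simple vertex algebra with $\Omega(\,_\mu M) = \mathbb{C}{\bf 1}$), by uniqueness this unique simple module must be $_\mu M$ itself. Finally one checks $_\mu M$ is ordinary as a module over itself: its $L^\mu(0)$-eigenspaces coincide with its finite-dimensional weight spaces (shown in Cases 2 and 4(a) of Section~3, where $\dim\,_\mu M_\lambda \le \dim\,_{\mathrm{Re}(\mu)}M_{\mathrm{Re}(\lambda)} < \infty$), and condition (ii) of Definition~\ref{define-C-graded-VOA} guarantees only finitely many eigenvalues have nonpositive real part. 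Thus condition (3) holds. Applying Theorem~\ref{Vrational} then gives that $_\mu M$ is rational with a unique simple $\mathbb{C}_{Re>0}$-graded module, which by the identification above is the ordinary module $_\mu M$. The delicate point worth spelling out carefully is the ordinariness check, since a priori a simple $\CR$-graded module need not have finite-dimensional graded pieces; here it is forced because that module is $_\mu M$ itself, whose graded pieces are finite-dimensional by the rank-one Weyl algebra computation.
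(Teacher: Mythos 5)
Your proposal is correct and follows essentially the same route as the paper's proof: Theorem \ref{mu-theorem} for the $\Omega$-generated $\CR$-graded vertex operator algebra structure, Theorem \ref{A(V)} to get $A(\,_\mu M)\cong\mathbb{C}$, the Zhu-algebra correspondence to identify the unique simple $\CR$-graded module as $_\mu M$ itself, the observation that $_\mu M$ is ordinary as a module over itself, and Theorem \ref{rationalcondition} (which you invoke via its packaged form, Theorem \ref{Vrational}). One minor slip in your verification of condition (2): an integer weight does not force $t=k$ (for $\mu=1/2$, which satisfies hypothesis (i), the vector $a(-1)^2{\bf 1}$ has weight $1$ with $k=2$, $t=0$), but the conclusion you actually need --- that $V^0=\bigoplus_{n=0}^{\infty}V_n$ --- still holds, since every weight $r+s+k(1-\mu)+t\mu$ has nonnegative real part when $0<Re(\mu)<1$, so any integer weight is a nonnegative integer.
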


\begin{proof} 
Theorem \ref{mu-theorem} implies that $_\mu M$ is  an $\Omega$-generated $\CR$-graded vertex operator algebra.  Theorem \ref{A(V)} and the fact that $|a(-1){\bf 1}| = 1-\mu$ and $|a^*(0){\bf 1}| = \mu$ imply that $A(_\mu M) \cong \mathbb{C}$, and thus $_\mu M$ has only one irreducible $\CR$-graded module.  Since $_\mu M$ is a $\CR$-graded irreducible module over itself, we conclude that the only simple $\CR$-module is $_\mu M$.  We observe that in fact $_\mu M$ is an ordinary $_\mu M$-module as well.   Thus by Theorem \ref{rationalcondition},  we have that $_\mu M$ is rational. 
\end{proof}

\begin{cor}\label{rankn} 
For $i\in\{1,...,n\}$,  if $\mu_i \in \mathbb{C}$ and one of the following holds for each $\mu_i$:

(i) $0 < Re(\mu_i) \leq 1/2$ and $|Im(\mu_i)| \leq Re(\mu_i)$;

or 

(ii) $0< Re(1-\mu_i)< 1/2$ and $|Im(\mu_i)| \leq Re(1-\mu_i)$. \\
Then, $(_{\mu_1} M,\omega_{\mu_1})\otimes......\otimes(\, _{\mu_n} M,\omega_{\mu_n})$ is rational.
\end{cor}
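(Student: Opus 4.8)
The plan is to reduce Corollary \ref{rankn} to Theorems \ref{A(V)} and \ref{rationalcondition} applied to the tensor product
\[ V \ := \ (_{\mu_1} M,\omega_{\mu_1})\otimes\cdots\otimes(_{\mu_n} M,\omega_{\mu_n}), \]
exactly as Theorem \ref{rank1} does in the rank one case, rather than developing a K\"unneth-type formula for $A(V)$. By Lemma \ref{Weyl-iso-lemma} one may assume without loss of generality that each $\mu_i$ satisfies (i), so that $0<Re(\mu_i)\le 1/2$, $|Im(\mu_i)|\le Re(\mu_i)$, and in particular $Re(\mu_i)\in(0,1)$, so neither $\mu_i$ nor $1-\mu_i$ lies in $\mathbb{Z}$.

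The first and main step is to show that $V$ is a finitely $\Omega$-generated $\CR$-graded vertex operator algebra, i.e. $V\in\Omega VOA(\CR(\mathcal{V}))$. The tensor product is conformal with $\omega=\sum_i\omega_{\mu_i}$ and $L(0)=\sum_i L^{\mu_i}(0)$, so the weight of a pure tensor is the sum of the weights of its tensor factors. By Theorem \ref{mu-theorem}(I) and Cases 2 and 4(a) of Section 3, each $_{\mu_i}M$ lies in $\Omega VOA(\CR(\mathcal{V}))$ with $\Omega(_{\mu_i}M)=\mathbb{C}\vac$, and every weight $\lambda_i$ of $_{\mu_i}M$ satisfies $Re(\lambda_i)\ge|Im(\lambda_i)|\ge0$; summing over $i$, every weight $\lambda$ of $V$ satisfies $Re(\lambda)\ge|Im(\lambda)|\ge 0$, with $Re(\lambda)=0$ only for $\lambda=0$. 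Finite-dimensionality of each $V_\lambda$ I would deduce, exactly as in the Case 4(a) argument, by passing to real parts and comparing with the Case 2 algebra $_{Re(\mu_1)}M\otimes\cdots\otimes\,_{Re(\mu_n)}M$. The remaining point --- and the heart of the matter --- is the identification $\Omega(V)=\mathbb{C}\vac$: one inclusion is clear since $\vac\in\Omega(_{\mu_i}M)$ for all $i$; for the other, since $0<Re(\mu_i)<1$ the modes $a_i(m)=(a_i(-1)\vac)_m$ with $m\ge0$ and $a_i^*(k)=(a_i^*(0)\vac)_{k-1}$ with $k\ge1$ all lower the real part of the weight and are not weight-preserving (as $1-\mu_i,\mu_i\notin\mathbb{Z}$), hence annihilate any $w\in\Omega(V)$; under $V\cong\mathbb{C}[a_i(-m),a_i^*(-k)\mid m>0,\ k\ge0,\ 1\le i\le n]$ these operators act (up to sign) as the partial derivatives in all the polynomial generators, so their joint kernel is $\mathbb{C}\vac$. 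Finally, $V$ is strongly finitely generated (hence spanned by modes acting on $\vac$) by $\{a_i(-1)\vac,\ a_i^*(0)\vac\mid 1\le i\le n\}$, this finite set acts on $\vac\in\Omega(V)$ and $\Omega(V)$ is finite-dimensional, so by Remark \ref{strogen} $V$ is finitely $\Omega$-generated; its degree grading coincides with the $L(0)$-grading; and since all weights have nonnegative real part, its integer-graded part is $V^0=\bigoplus_{n\ge 0}V_n$.

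Granting Step 1, the rest is formal. The strong generators $a_i(-1)\vac$ and $a_i^*(0)\vac$ have non-integer weights $1-\mu_i$ and $\mu_i$, so Theorem \ref{A(V)} gives $A(V)\cong\mathbb{C}$, and Proposition \ref{correspondence}(ii) then shows $V$ has a unique simple $\mathbb{C}_{Re>0}$-graded module up to isomorphism. Since the rank $n$ Weyl vertex algebra is simple, $V$ is a simple $\CR$-graded module over itself, so that unique module is $V$ itself, which is an ordinary $V$-module. I would then invoke Theorem \ref{rationalcondition}: condition (1) holds because the unique simple $\CR$-graded $V$-module is ordinary; condition (2) holds because $A(V)\cong\mathbb{C}$ is finite-dimensional semisimple; and condition (3) holds with $\lambda=0$, since on the one-dimensional $A(V)$-module $\mathbb{C}\vac$ the zero mode $o(\omega)=L(0)$ acts by $0$. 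Hence $V$ is rational, which is the claim.

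The step I expect to be the main obstacle is Step 1 --- verifying that the tensor product really belongs to the category $\Omega VOA(\CR(\mathcal{V}))$, in particular the identification $\Omega(V)=\mathbb{C}\vac$ together with condition (ii) of Definition \ref{define-C-graded-VOA} for the tensor grading. These are exactly the places where the hypotheses (i)--(ii) on the $\mu_i$ (equivalently, those of Theorem \ref{rank1}) are used; everything after Step 1 is a routine assembly of the rank one results.
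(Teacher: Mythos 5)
Your proof is correct, but it is far more explicit than the paper's, which disposes of Corollary \ref{rankn} in a single sentence: ``This follows immediately from Theorem \ref{rank1}.'' The paper thus treats the passage from rank one to rank $n$ as immediate, implicitly relying either on the folklore that a tensor product of rational vertex algebras is rational, or on the reader re-running the rank-one argument on the tensor product; the former would require knowing that every $\CR$-graded module for the tensor product decomposes into tensor products of modules for the factors, which is not established in this $\mathbb{C}$-graded setting. Your route sidesteps this by verifying directly that $V={}_{\mu_1}M\otimes\cdots\otimes{}_{\mu_n}M$ is itself a finitely $\Omega$-generated $\CR$-graded vertex operator algebra with $\Omega(V)=\mathbb{C}\vac$ and non-integer-weight strong generators $a_i(-1)\vac$, $a_i^*(0)\vac$, and then applying Theorems \ref{A(V)} and \ref{rationalcondition} and Proposition \ref{correspondence} exactly as in the proof of Theorem \ref{rank1}. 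The individual verifications all check out: the reduction to case (i) via Lemma \ref{Weyl-iso-lemma}, the triangle inequality giving $Re(\lambda)\geq|Im(\lambda)|$ for the sum grading, finite-dimensionality of weight spaces by comparison with the real-part (Case 2) algebra, and the identification of $\Omega(V)$ with the joint kernel of the operators $a_i(m)$ ($m\geq0$) and $a_i^*(k)$ ($k\geq1$), which act as partial derivatives on the polynomial realization. What your approach buys is a self-contained, rigorous proof of the corollary; what the paper's buys is brevity, at the cost of leaving the entire tensor-product step to the reader.
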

\begin{proof} This follows immediately from Theorem \ref{rank1}.
\end{proof}

 For more general values of $\mu$, namely in the range $0\leq Re(\mu) \leq 1$,  but not necessarily in the subregion defined by $(i)$ and $(ii)$ in Theorem \ref{rank1} and Corolllary \ref{rankn}, we do not necessarily obtain a $\CR$ graded vertex {\it operator} algebra structure in $_\mu M$ but we still have that $_\mu M$ is a finitely $\Omega$-generated $\CR$-graded vertex algebra (see Theorem \ref{mu-theorem} Case II). We conclude this section by showing that these families of Weyl vertex algebras admit only one irreducible $\CR$-graded simple module. 

\begin{thm} \label{ZhuCWeyl}
Let $\mu \in \mathbb{C}\setminus \{0, 1\}$ be such that $0\leq Re(\mu) \leq 1$. Then, the Weyl vertex algebra $_\mu M$ admits a unique, up to isomorphism, irreducible $\CR$-graded module which is $_\mu M$ itself.
\end{thm}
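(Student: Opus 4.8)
The plan is to deduce this from Theorem~\ref{A(V)} together with the module correspondence in Proposition~\ref{correspondence}. By Theorem~\ref{mu-theorem}(II), for $\mu\in\mathbb{C}\setminus\{0,1\}$ with $0\leq Re(\mu)\leq 1$ the vertex algebra ${}_\mu M$ is a finitely $\Omega$-generated $\CR$-graded vertex algebra, strongly generated over $\Omega({}_\mu M)=\mathbb{C}{\bf 1}$ by $u^1=a(-1){\bf 1}$ and $u^2=a^*(0){\bf 1}$, which by~(\ref{deggen1})--(\ref{deggen2}) have weights $1-\mu$ and $\mu$. So it suffices to verify the two hypotheses of Theorem~\ref{A(V)}: that $|u^1|,|u^2|\notin\mathbb{Z}$, and that the integer-graded subalgebra $({}_\mu M)^0$ equals $\bigoplus_{n=0}^{\infty}({}_\mu M)_n$.

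The first hypothesis is immediate: since $\mu\neq 0,1$ and $0\leq Re(\mu)\leq 1$ we have $\mu\notin\mathbb{Z}$ (if $Im(\mu)=0$ then $0<\mu<1$; if $Im(\mu)\neq 0$ then $\mu\notin\mathbb{R}$), hence also $1-\mu\notin\mathbb{Z}$, so $u^1,u^2\in{}_\mu M\setminus({}_\mu M)^0$. For the second, I would use the weight formula~(\ref{grading2}): a monomial $v=a(-m_1-1)\cdots a(-m_k-1)a^*(-n_1)\cdots a^*(-n_t){\bf 1}$ has $\mathrm{wt}\,v=r+s+k(1-Re(\mu))+Re(\mu)\,t+i\,Im(\mu)(t-k)$ with $r=\sum_i m_i\geq 0$ and $s=\sum_j n_j\geq 0$. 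Since $0\leq Re(\mu)\leq 1$, the real part is a sum of nonnegative terms, so $Re(\mathrm{wt}\,v)\geq 0$; and if $\mathrm{wt}\,v\in\mathbb{Z}$ then $Im(\mu)(t-k)=0$, forcing $\mathrm{wt}\,v=Re(\mathrm{wt}\,v)\geq 0$. As ${\bf 1}\in({}_\mu M)_0$, this gives $({}_\mu M)^0=\bigoplus_{n=0}^{\infty}({}_\mu M)_n$, and this is exactly the point where the hypothesis $0\leq Re(\mu)\leq 1$ is genuinely used.

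With both hypotheses in hand, Theorem~\ref{A(V)} gives $A({}_\mu M)\cong\mathbb{C}$, which has a unique simple module up to isomorphism, so by Proposition~\ref{correspondence}(ii) there is a unique, up to isomorphism, simple $\CR$-graded ${}_\mu M$-module. Finally, since $M$ is a simple vertex algebra (Section~3) it has no proper nonzero weak $V$-submodule, hence no proper nonzero $\CR$-graded submodule, so by Proposition~\ref{strongly-generated-LM-prop}(1) the algebra ${}_\mu M$ is itself a simple $\CR$-graded ${}_\mu M$-module; therefore it is the unique one. The argument is essentially bookkeeping once Theorems~\ref{mu-theorem} and~\ref{A(V)} are available, the only delicate point being the boundedness below of $({}_\mu M)^0$ checked above; note that, in contrast to Theorem~\ref{rank1}, we do not assert rationality here, since for these more general $\mu$ the pair $({}_\mu M,\omega_\mu)$ need not be an $\Omega$-generated $\CR$-graded vertex \emph{operator} algebra, so condition~(1) of Theorem~\ref{rationalcondition} is unavailable.
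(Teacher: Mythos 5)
Your proposal is correct and follows essentially the same route as the paper: invoke Theorem~\ref{mu-theorem}(II), verify the two hypotheses of Theorem~\ref{A(V)} to get $A({}_\mu M)\cong\mathbb{C}$, and conclude via Proposition~\ref{correspondence}(ii). You simply supply more detail than the paper does at two points (the explicit weight computation showing $({}_\mu M)^0$ is nonnegatively graded, and the simplicity argument identifying the unique module as ${}_\mu M$ itself), both of which are sound.
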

\begin{proof}
By Theorem \ref{mu-theorem} (II), for $\mu \in \mathbb{C}$ such that $0\leq Re(\mu) \leq 1$, the Weyl vertex algebra $_\mu M$ is a finitely $\Omega$-generated $\CR$-graded vertex algebra. Moreover, because $\mu \neq 0,1$ we have from equations (\ref{deggen1}) and  (\ref{deggen2}) that the strong generators of $_\mu M$ have non-integer degree so that condition (1) of Theorem \ref{A(V)} is satisfied. In addition, since $\Omega(_\mu M)=\mathbb{C}\vac$ it is clear that condition (2) of the Theorem also holds. Therefore, we obtain that $A(_\mu M)\cong \mathbb{C}.$ Finally, Proposition \ref{correspondence} $(2)$ implies that $_\mu M$ admits only one irreducible $\CR$-graded module which must be $_\mu M$ itself.

\end{proof}
\begin{rmk}
\begin{enumerate}
    \item We note that in light of Theorem \ref{ZhuCWeyl} we obtain a family of conformal vertex algebras in which the Zhu algebra is one dimensional. In particular, the class of the conformal vector, $[\omega]\in A(V)$ must be a multiple of the class of the vacuum vector $[\vac]$ as in the classical setting of vertex operator algebras constructed from self dual lattices \cite{FLM1}.
    \item  The Weyl vertex algebras admit many non-isomorphic irreducible {\it weak} modules such as the relaxed highest weight modules studied in \cite{RW}. We note, however, that those modules are independent of the conformal structure on the Weyl vertex algebra and are not $\CR$-modules, because they have infinite dimensional graded components. In particular, they are not ``admissible'' modules, namely, modules induced from the level zero Zhu algebra, and thus, possessing a $\CR$-grading.
    Another such example of (non-admissible) non-isomorphic weak modules are the (generalized) Whittaker modules, for which the reducibility was studied in \cite{ AP2, ALPY}.
  
\end{enumerate}

\end{rmk}

\section{6. Summary of applications and future work}
 In this work, we classified the $\mathbb{C}$-graded conformal structures associated to the Weyl vertex algebra. Moreover, we showed that a large family of this vertex algebras admit a unique irreducible ``admissible'' module in the appropriate sense. We also described in detail which families of Weyl vertex algebras admit the $\mathbb{C}$-graded notion of a vertex operator algebra and proved that non-integer $\mathbb{C}$-graded Weyl vertex operator algebras are rational.
In the literature, the Weyl vertex algebra at central charge 2 has been studied in detail (see for instance \cite{L,AW, RW, AP}). This vertex algebra, $_0 M$ in our notation, is not a vertex operator algebra because its graded components fail to be finite dimensional. Linshaw showed in \cite{L} that the (level zero) Zhu algebra $A(_0 M)$ is isomorphic to the rank one Weyl algebra $\mathcal{A}_1$. Higher level Zhu algebras introduced by Dong, Li, and Mason in  \cite{DLM}, can be used to study indecomposable nonirreducible modules.  
Using the theory and methods developed by the first  named author of the current paper, along with Vander Werf and Yang, in \cite{BVY1,BVY2}, and with Addabbo in \cite{AB1, AB2}, preliminary calculations by Addabbo together with the authors of the current paper indicate that the level one Zhu algebra for this Weyl vertex algebra satisfies

 $$A_1(_0 M)\cong \mathcal{A}_1\oplus (\mathcal{A}_1  \otimes Mat_2(\mathbb{C}) ),$$ with $\mathcal{A}_1$ the rank one Weyl algebra.  In particular, the injective image of the level zero Zhu algebra $\mathcal{A}_1$ inside the level one Zhu algebra has a direct sum complement, namely $\mathcal{A}_1 \otimes Mat_2(\mathbb{C})$, and this complement is Morita equivalent to the level zero Zhu algebra $\mathcal{A}_1$. 
 Therefore, there are no new $\mathbb{N}$-gradable $_0 M$-modules detected by the level one Zhu algebra for $_0 M$ that were not already detected by the level zero Zhu algebra.  Thus, this agrees with the work of \cite{AW} on category $\mathcal{F}$ as discussed in the introduction.  Although this shows that the structure of the level one Zhu algebra gives no new information for the admissible $_0 M$-modules, 
we expect that the study of higher level Zhu algebras for $_0 M$ and in the more general $\mathbb{C}$-graded setting will shed light on the difficult open problem of describing the Zhu algebra for an orbifold vertex algebra in which twisted modules are expected to be detected.

\begin{acknowledgments}
We thank the organizers of the Women in Mathematical Physics (WOMAP) conference, Ana Ros Camacho and Nezhla Aghaei where this work started. We are also grateful to the Banff International research station for their (online) hospitality. K. Batistelli thanks Fondecyt for its support. F. Orosz Hunziker thanks the National Science Foundation for its support. V. Pedić Tomić thanks QuantiXLie Center of Excellence for its support. G. Yamskulna thanks the College of Arts and Sciences, Illinois State University for its support. The authors thank Dražen Adamović for insightful discussions on topics related to Weyl vertex algebras and their representation theory. The authors also thank Darlayne Addabbo for her contribution to the preliminary computations on the level one Zhu algebra for the Weyl vertex algebra of central charge 2. The authors are grateful to the referee for their comments and suggestions.

K. Batistelli is supported by Fondecyt project 3190144. F. Orosz Hunziker is supported by the National Science Foundation under grant No. DMS-2102786. V. Pedić Tomić is partially supported by the QuantiXLie Centre of Excellence, a project
cofinanced by the Croatian Government and European Union through the European Regional Development Fund - the Competitiveness and Cohesion Operational Programme
(KK.01.1.1.01.0004). G. Yamskulna is supported by the College of Arts and Sciences, Illinois State University.
\end{acknowledgments}
\appendix
\section{APPENDIX}
\setcounter{equation}{0}
\renewcommand{\theequation}{A.\arabic{equation}}

\subsection{A. Proof of Lemma \ref{grading-lemma}} \label{VCRmod}

 We will prove that if $V$ is  an $\Omega$-generated $\CR$-graded vertex algebra then it satisfies the following:
 
 For $r\geq 1$,   $v^1$,\dots,$v^r$ homogeneous elements in $V$, $n_1$,\dots,$n_r$ integers and $u^0$ a vector in $\Omega(V)$ such that $$v^r_{n_r}v^{r-1}_{n_{r-1}}\cdots v^1_{n_1}u^0\neq 0,$$ then either $$\sum_{j=1}^r(|v^j|-n_j-1 )=0 \qquad \text{ or } \qquad Re\left(\sum_{j=1}^r(|v^j|-n_j-1)\right)>0.$$ 
 
\begin{proof}
We prove the proposition by induction on $r$.
If $r=1$ and $v^1_{n_1}u^0\neq 0$ then because $u^0 \in \Omega(V)$, we have that either $n_1=|v^1|-1$ or $n< Re(|v^1|-1)$. Equivalently, either $|v^1|-n_1-1=0$ or $Re(|v^1|-1-n)>0$, so the proposition holds for $r=1.$

\noindent Next, assume that $r\geq 2$ and that $$v^r_{n_r}v^{r-1}_{n_{r-1}}\cdots v^1_{n_1}u^0\neq 0.$$ Using the inductive hypothesis on $$v^{r-1}_{n_{r-1}} \cdots v^1_{n_1}u^0$$ we know that either
$$\sum_{j=1}^{r-1}(|v^j|-n_j-1) =0 \qquad \text{ or } \qquad Re\left(\sum_{j=1}^{r-1}(|v^j|-n_j-1)\right)>0.$$
We consider the following two cases:
\begin{enumerate}
    \item[Case 1] If either $|v^r|-n_r-1=0$ or $Re(|v^r|-n_r-1)>0$, we can conclude immediately that either $\sum_{j=1}^r (|v^j|-n_j-1)=0$ or $ Re\left(\sum_{j=1}^r(|v^j|-n_j-1)\right)>0$ and we are done with this case.
    \vskip1cm
    \item[Case 2] If $|v^r|-n_r-1\neq 0$ and $Re(|v^r|-n_r-1)\leq 0$.  Before presenting the proof of the Lemma in this case we recall the commutator formula (cf.  Eqn.\ (3.1.9) in \cite{LL}) which holds for $n,m\in \mathbb{Z}$ and any two elements $v, v' $ in a vertex algebra:
    \begin{align} \label{comm}
    [v_n,v'_m]=\sum_{i\geq 0}\binom{n}{i}(v_{i}v')_{m+n-i}    
    \end{align}
   Using (\ref{comm}) we can rewrite
\begin{align} \label{r}
v^r_{n_r}v^{r-1}_{n_{r-1}}v^{r-2}_{n_{r-2}}& \cdots v^1_{n_1}u^0=\\
&v^{r-1}_{n_{r-1}}v^r_{n_r}v^{r-2}_{n_{r-2}} \cdots v^1_{n_1}u^0+\sum_{i\geq 0}{n_r\choose i}(v^r_i v^{r-1})_{n_r+n_{r-1}-i}v^{r-2}_{n_{r-2}} \cdots v^1_{n_1}u^0. \nonumber
\end{align}
We further analyze the following two subcases:

Case 2.I There exists $i\geq 0$ such that $(v^r_i v^{r-1})_{n_r+n_{r-1}-i}v^{r-2}_{n_{r-2}} \cdots v^1_{n_1}u^0\neq 0.$ By the inductive hypothesis we have that 
\begin{align*}
    &|v^r_iv^{r-1}|-n_r -n_{r-1}+i-1+\sum_{j=1}^{r-2}(|v^j|-n_j-1) =0 \textrm{  or  }\nonumber \\  
    &Re\left(|v^r_iv^{r-1}|-n_r -n_{r-1}+i-1+\sum_{j=1}^{r-2}(|v^j|-n_j-1) \right)>0.
\end{align*}

Using Remark \ref{weight-gradingin-remark} (2) we have that 
$|v^r_iv^{r-1}|=|v^r|+|v^{r-1}|-i-1$ so we can conclude that either 
$\sum_{j=1}^r (|v^j|-n_j-1)=0$ or $ Re\left(\sum_{j=1}^r(|v^j|-n_j-1)\right)>0$  and the Lemma holds in Case 2.I.
\vskip.5cm
Case 2.II For all $i\geq 0,$  $(v^r_i v^{r-1})_{n_r+n_{r-1}-i}v^{r-2}_{n_{r-2}}\cdots v^1_{n_1}u^0=0$. Then by (\ref{r}) we have that 
\begin{align*}
v^r_{n_r}v^{r-1}_{n_{r-1}}v^{r-2}_{n_{r-2}} \cdots v^1_{n_1}u^0=v^{r-1}_{n_{r-1}}v^r_{n_{r}}v^{r-2}_{n_{r-2}} \cdots v^1_{n_1}u^0.
\end{align*}
Using the commutator formula (\ref{comm}) again on the right hand side of the equation above we get
\begin{align*}
v^{r-1}_{n_{r-1}}&v^r_{n_{r}}v^{r-2}_{n_{r-2}} \cdots v^1_{n_1}u^0=\\
&v^{r-1}_{n_{r-1}}\left(v^{r-2}_{n_{r-2}}v^r_{n_r}v^{r-3}_{n_{r-2}} \cdots v^1_{n_1}u^0+\sum_{i\geq 0}{n_r\choose i}(v^r_i v^{r-2})_{n_r+n_{r-2}-i}v^{r-3}_{n_{r-3}} \cdots v^1_{n_1}\right)u^0.
\end{align*} 

If there exists $i\geq0$ such that $v^{r-1}_{n_{r-1}}(v^r_i v^{r-2})_{n_r+n_{r-2}-i}v^{r-3}_{n_{r-3}} \cdots v^1_{n_1}u^0 \neq 0$ using the inductive hypothesis, we have that either
$$\sum_{j=1}^r (|v^j|-n_j-1)=0 \textrm{ or }  Re\left(\sum_{j=1}^r(|v^j|-n_j-1)\right)>0.$$ Moreover, this reasoning applies as long as there exists $1<j<r$ and $i\geq 0$ such that 
$$v^{r-1}_{n_{r-1}}v^{r-2}_{n_{r-2}}\cdots (v^r_iv^{r-j})_{n_{r}+n_{r-j}-i}\cdots v^1_{n_{1}}u^0\neq 0.$$
To finish the proof we show that there must exist such a $j$ and $i$: Otherwise, the commutator formula applied $r$ times implies that 
\begin{align*}
v^r_{n_r}v^{r-1}_{n_{r-1}}v^{r-2}_{n_{r-2}} \cdots v^1_{n_1}u^0=v^{r-1}_{n_{r-1}}v^{r-2}_{n_{r-2}} \cdots v^1_{n_1}v^r_{n_{r}}u^0\neq 0.
\end{align*}
In particular, $v^r_{n_{r}}u^0\neq 0$ which contradicts the fact that $u^0\in \Omega(V)$ since by assumption $|v^r|-n_r-1\neq 0$ and $Re(|v^r|-n_r-1)\leq 0$ in Case 2. Therefore, the lemma holds in case 2.II.
\end{enumerate}
\end{proof}

\nocite{*}

\end{document}